\documentclass{amsart}
\usepackage[scale=0.75, centering, headheight=14pt]{geometry}
\usepackage[latin1]{inputenc}
\usepackage[T1]{fontenc}
\usepackage{lmodern}
\usepackage[english]{babel}
\usepackage{microtype}
\usepackage{tikz}
\usepackage{amsmath,amssymb,amsfonts,amsthm}
\usepackage{mathtools,accents}
\usepackage{mathrsfs}
\usepackage{aliascnt}
\usepackage{braket}
\usepackage{bm}
\usepackage[initials]{amsrefs}

\usepackage[citecolor=blue,colorlinks]{hyperref}
\addto\extrasenglish{}

\usepackage{enumerate}
\usepackage{xcolor}

\usepackage{aliascnt}

\makeatletter
\def\newaliasedtheorem#1[#2]#3{
  \newaliascnt{#1@alt}{#2}
  \newtheorem{#1}[#1@alt]{#3}
  \expandafter\newcommand\csname #1@altname\endcsname{#3}
}
\makeatother

\theoremstyle{plain}
\newtheorem{theorem}{Theorem}[section]
\newaliasedtheorem{lemma}[theorem]{Lemma}
\newaliasedtheorem{prop}[theorem]{Proposition}
\newaliasedtheorem{claim}[theorem]{Claim}
\newaliasedtheorem{corollary}[theorem]{Corollary}

\theoremstyle{definition}
\newaliasedtheorem{definition}[theorem]{Definition}
\newaliasedtheorem{example}[theorem]{Example}

\theoremstyle{remark}
\newaliasedtheorem{remark}[theorem]{Remark}

\numberwithin{equation}{section}

\def\eps{\varepsilon}
\def\R{{\mathbb{R}}}
\def\T{{\mathbb T}}
\def\d{{\partial}}
\def\ra{\rightarrow}
\def\a{{\alpha}}
\def\i{{\infty}}

\title[Scattering for NLKG on $\R^d \times \mathbb T$]{Large Data Scattering for the  defocusing 
NLKG on waveguide $\R^d \times \mathbb T$.}

\author[L. Forcella \and L. Hari]{Luigi Forcella \and Lysianne Hari}

\address{Luigi Forcella\hfill\break Scuola Normale Superiore, Piazza dei Cavalieri, 7, 56126 Pisa, Italy}
\email{luigi.forcella@sns.it}

\address{Lysianne Hari \hfill\break Laboratoire de Math\'ematiques de Besan\c{c}on (LMB, UMR CNRS 6623), Universit\'e de Bourgogne Franche-Comt\'e, 16 route de Gray, 25030 Besan\c{c}on CEDEX, France.}
\email{lysianne.hari@univ-fcomte.fr}

\subjclass[2000]{}


\keywords{}

\begin{document}

\begin{abstract}
We consider the pure-power defocusing nonlinear Klein-Gordon equation, in the $H^1-$subcritical case, posed on the product space 
$\R^d\times \mathbb T$, where $\mathbb T$ is the one-dimensional flat torus. In this framework, we prove that scattering holds for any initial data belonging to the energy space $H^1 \times L^2$ for $1\leq d\leq 4$. The strategy consists in proving a suitable profile decomposition theorem in $\R^d\times \mathbb T$ to pursue a concentration-compactness \& rigidity method.
\\[2mm]
Keywords: Nonlinear Klein-Gordon equation, scattering, concentration-compactness method.
\\[2mm]
Mathematics Subject Classification 2010: 35L70, 35R01, 35B40. 
\end{abstract}
\maketitle



\section{Introduction}

We consider the following Cauchy problem for the pure-power defocusing nonlinear Klein-Gordon equation posed on the waveguide $\mathbb{R}^d\times\mathbb T,$ with $1\leq d\leq 4$
\begin{equation}\label{NLKG}
\left\{\begin{aligned}
\partial_{tt}u-\Delta_{x,y}u+u&=-|u|^{\alpha}u,\quad (t,x,y) \in \R\times \R^d\times \mathbb T\\
u(0,x,y)&=f(x,y)\in H^1(\R^d\times \mathbb T)\\
\partial_tu(0,x,y)&=g(x,y)\in L^2(\R^d\times \mathbb T)
\end{aligned}\right.,
\end{equation}
where $\mathbb T$ is the one-dimensional flat torus and $\Delta_{x,y}=\Delta_x+\Delta_y$ is the usual  Laplace operator $\sum_{i=1}^d\partial_{x_i}^2+\partial_y^2$.

\noindent We consider nonlinearities that are energy subcritical on $\R^{d+1}$ and mass supercritical on $\R^d,$ 
namely we restrict our attention to $\frac4d<\alpha<\frac{4}{d-1}$ for $2\leq d\leq 4$ while $\a>4$ for $d=1.$ 
For some particular choices of nonlinearities, aside from the natural question of existence of solutions, it is of interest to try to relate the long-time behavior of nonlinear solutions to linear solutions in appropriate functional spaces. We wish to investigate the energy scattering for \eqref{NLKG}.
\\

About the pure euclidean case $\R^d$, there is a huge mathematical literature, not only for the Klein-Gordon equation but in general for other dispersive PDEs such as the nonlinear Schr\"odinger (NLS) equation and the nonlinear wave equation (NLW). 
We recall that Strichartz estimates play an essential role for the local well-posedness and for the large time analysis of the solutions - once Strichartz estimates have been proved to hold globally in time. 
The nonlinear Klein-Gordon (NLKG) equation has been deeply studied in the euclidean framework, producing a huge literature.
We only give here some references amongst others about the scattering results: in high dimension cases $d\geq 3$, we mention the early works by Morawetz \cite{Mor} and Morawetz and Strauss \cite{MS}, the works by Brenner \cite{Brenner84, Brenner85}, Ginibre and Velo \cite{GV_NLKG_I, GV_NLKG_II, GV_decay}, while for the low dimensional case $\mathbb R^d$ with $d=1,2,$ the question of scattering has been solved by Nakanishi in \cite{Nak}. The focusing case have been investigated in \cite{IMN11} by Ibrahim, Masmoudi and Nakanishi both in the energy subcritical and critical cases. 
For a more complete picture of the known results, we refer the reader to the references contained in the previously cited papers.  

Unlike the euclidean setting, the compact one does not exhibit the same phenomenon. This is due to the presence of periodic solutions inducing a lack of (global-in-time) summability on them. Nevertheless, it is worth pointing out that existence properties on compact manifolds have been investigated  for NLS by Bourgain in \cite{Bou} and later by Burq, Gerard and Tzvetkov in \cite{BGT}.
For existence results for NLKG, valid on more general manifolds, we refer the reader to \cite{KapIII}.

The question of ``mixing'' both configurations, to understand the competition of induced phenomena is natural. The study of scattering properties for solutions to NLS posed on a product space was proved for small data on $\R^d\times\mathcal{M}^k$ - $\mathcal{M}^k$ being a compact Riemannian manifold - by Tzvetkov-Visciglia \cite{TVI} followed by a theorem of large data scattering by the previous authors in \cite{TV2}. 
We also mention the existence of several related results in mixed settings, among which \cite{GPT,HP,HPTV,Tar,Rocha,CGYZ} and references therein.
\\


Our purpose is to carry on with the investigation of the second author and Visciglia started in \cite{HV}. In that paper the authors proved scattering for small energy data for the pure-power nonlinear energy-critical Klein-Gordon equation in the framework of $\R^d\times\mathcal M^2,$ in both defocusing and focusing regimes (the latter corresponding to an opposite sign in front of the nonlinear term in \eqref{NLKG}) and where $\mathcal M^2$ is a bidimensional compact manifold. For small initial data, once Strichartz estimates have been proved to hold globally in time, the global well-posedness and scattering can be proved by a perturbative argument.

This is no more the case when dealing with initial data without smallness assumption. We use the strategy of concentration-compactness \and rigidity method, pioneered by Kenig and Merle in \cite{KM1, KM2} for long-time behavior study for dispersive PDEs. To this aim, after having studied small data scattering on $\R^d \times \T$, our first step is to prove a profile decomposition theorem on the considered product space. Then thanks to a perturbative argument, we construct a minimal energy solution which is global in time but does not enjoy a finite Strichartz bound which would lead to the scattering property. 
Moreover, we prove that the trajectory of this solution is precompact in the energy space, and this will give a contradiction to its existence once combined with Nakanishi/Morawetz estimates. The choice of the strategy \emph{\`a la Kenig \& Merle} seems to be the best adapted to our setting, to deal with either defocusing or focusing nonlinearities and may be revisited for critical cases. We however recall that, combined with the mixed geometry, the ``bad sign'' of the energy in the focusing case is usually an obstruction to prove a priori bounds such as the Morawetz or the Nakanishi/Morawetz estimates, whereas for the energy-critical cases, it is expected that the lack of scaling invariance of NLKG will bring a delicate technical issue. Therefore, we restrict our attention on the defocusing (energy) subcritical cases, and especially on the adjustment of the euclidean arguments and tools in our setting, whereas the other cases are objects of future investigation.
\\

We briefly recall what we intend as scattering properties: we investigate the completeness of the wave operator by showing that, a global solution $u(t,x,t)$ to \eqref{NLKG} behaves, as time $t$ tends to $\pm \infty$, like a solution to the following linear equation
\begin{equation}\label{LKG}
\left\{\begin{aligned}
\partial_{tt}v-\Delta_{x,y}v+v&=0,\quad (t,x,y) \in \R\times \R^d\times \mathbb T\\
v(0,x,y)&=f^\pm\in H^1(\R^d\times \mathbb T)\\
\partial_tv(0,x,y)&=g^\pm\in L^2(\R^d\times \mathbb T)
\end{aligned}\right.
\end{equation}
for some initial data $(f^\pm,g^\pm)\in H^1(\R^d\times\mathbb T)\times L^2(\R^d\times\mathbb T).$ 
The main result of this paper is stated as follows. 
\begin{theorem}\label{main}
Assume that $d=1$ and $\alpha>4$ or $2\leq d\leq4$ and $\frac4d<\a<\frac{4}{d-1}.$ 
Let
\begin{equation}\label{reg-sol}
u\in \mathcal C(\R; H^1(\R^d\times\mathbb T))\cap \mathcal C^1(\R; L^2(\R^d\times\mathbb T))\cap L^{\alpha+1}(\R;L^{2(\alpha+1)}(\R^d\times\mathbb T)).
\end{equation}
be the unique global solution to \eqref{NLKG}: then for $t\to+\infty$ (respectively $t\to-\infty$) there exists $(f^+,g^+)\in H^1(\R^d\times\mathbb T)\times L^2(\R^d\times\mathbb T)$ (respectively $(f^-,g^-)\in H^1(\R^d\times\mathbb T)\times L^2(\R^d\times\mathbb T)$) such that 
\begin{equation}\label{scatt}
\lim_{t \rightarrow+\infty} \left\|u(t,x)-u^+(t,x) \right\|_{H^1(\R^d\times\mathbb T)} + \left\| \partial_t u(t,x)-\partial_t u^+(t,x) \right\|_{L^2(\R^d\times\mathbb T)}=0,
\end{equation}
\begin{equation*}
\left(\text{respectively}\quad\lim_{t \rightarrow - \infty} \left\|u(t,x)-u^-(t,x) \right\|_{H^1(\R^d\times\mathbb T)} + \left\| \partial_t u(t,x)-\partial_t u^-(t,x) \right\|_{L^2(\R^d\times\mathbb T)}=0\right),
\end{equation*}
where $u^+(t,x,y), u^-(t,x,y)\in H^1(\R^d\times\mathbb T)\times L^2(\R^d\times\mathbb T)$ are the corresponding solutions with initial data $(f^+,g^+)$ and $(f^-,g^-)$ to \eqref{LKG}.
\end{theorem}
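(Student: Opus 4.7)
My plan is to follow the concentration-compactness \& rigidity roadmap of Kenig and Merle, as anticipated in the introduction. The starting point is a small-data scattering theory on $\R^d \times \T$: expanding in Fourier series in $y$ reduces the linear Klein-Gordon evolution to a countable family of linear Klein-Gordon evolutions on $\R^d$, each with mass bounded below by $1$, which yields global-in-time Strichartz estimates on the waveguide. A standard fixed-point argument then gives global well-posedness and a finite Strichartz norm for sufficiently small energy data. Moreover, once finiteness of the Strichartz norm $\|u\|_{L^{\alpha+1}_t L^{2(\alpha+1)}_{x,y}}$ is granted, the scattering statement \eqref{scatt} follows routinely by applying Strichartz estimates to the Duhamel formula and extracting $(f^\pm, g^\pm)$ via a Cauchy-in-time argument. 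Thus the heart of the matter is to promote this finiteness from small to arbitrary energy.

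Arguing by contradiction, I would set $E_* := \sup\{E : \text{every solution with energy} \leq E \text{ has finite global Strichartz norm}\}$ and assume $E_* < \infty$. The key technical ingredient is a linear profile decomposition on $\R^d \times \T$: any sequence $(f_n, g_n)$ bounded in $H^1 \times L^2$ admits, up to extraction, a decomposition as a finite sum of linear profiles $v_L^j(t-t_n^j, x - x_n^j, y)$ plus a remainder whose linear Strichartz norm tends to $0$. Translations act only on the non-compact variable $x$, and two distinct regimes for the profiles appear: \emph{Euclidean} profiles, whose data concentrate at a tiny $x$-scale on which the torus looks flat, and which are effectively evolved by NLKG on $\R^{d+1}$; and \emph{waveguide} profiles, which genuinely depend on $y \in \T$ and are evolved by the full NLKG on $\R^d \times \T$. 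Orthogonality of profiles, Pythagorean expansions of the conserved quantities, and the asymptotic negligibility of the remainder have to be verified along the lines of Keraani and Bahouri-Gerard, adapted to mixed geometry as in Tzvetkov-Visciglia and Hari-Visciglia.

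Combined with a long-time perturbation lemma, this decomposition is used in the standard way to extract a nonzero \emph{critical element} $u_c$: a global solution of \eqref{NLKG} with energy exactly $E_*$ and infinite Strichartz norm, whose trajectory $\{(u_c(t), \partial_t u_c(t))\}_{t \in \R}$ is precompact in $H^1 \times L^2$ up to a translation path $x(t) \in \R^d$. The rigidity step then contradicts the existence of $u_c$ via a Nakanishi-Morawetz estimate on the waveguide: using a multiplier independent of $y$ (which respects the product structure and the translation invariance in $x$), one derives a quantitative space-time bound which, combined with orbit compactness, forces $u_c \equiv 0$---the desired contradiction.

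The principal obstacles I expect are (i) the profile decomposition in the mixed geometry, and in particular the correct identification of Euclidean-concentrating profiles as objects living on $\R^{d+1}$ together with the transfer of their $\R^{d+1}$-nonlinear evolution back to approximate solutions on the waveguide; and (ii) the derivation of a Nakanishi-Morawetz estimate tight enough in the subcritical range $\frac{4}{d} < \alpha < \frac{4}{d-1}$ (respectively $\alpha > 4$ when $d=1$) to rule out compact orbits. The remaining pieces---small-data theory, extraction of $u_c$, and the deduction of scattering from the finiteness of the Strichartz norm---follow the classical Kenig-Merle scheme.
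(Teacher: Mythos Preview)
Your overall roadmap---Strichartz on the waveguide, small-data scattering, profile decomposition, extraction of a precompact critical element, and rigidity via a Nakanishi--Morawetz estimate---is exactly the paper's strategy, and the deduction of \eqref{scatt} from finiteness of $\|u\|_{L^{\alpha+1}_t L^{2(\alpha+1)}_{x,y}}$ is handled just as you describe.

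There is, however, one place where you anticipate a difficulty that does not arise. You list as a principal obstacle the identification of \emph{Euclidean} profiles (data concentrating at tiny $x$-scales, evolved by NLKG on $\R^{d+1}$) versus \emph{waveguide} profiles. In the energy-\emph{subcritical} regime treated here there is no scaling symmetry, and consequently no small-scale concentration: the paper's profile decomposition (its Theorem~\ref{lpd}) involves \emph{only} translations $(t_n^j,x_n^j)\in\R\times\R^d$, with the dichotomy $t_n^j=0$ or $|t_n^j|\to\infty$, $x_n^j=0$ or $|x_n^j|\to\infty$. All profiles live on $\R^d\times\T$ and are evolved by the waveguide NLKG directly; no transfer to or from $\R^{d+1}$ is needed. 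The Euclidean-profile machinery you have in mind is a feature of energy-\emph{critical} problems on waveguides (Hani--Pausader, Ionescu--Pausader, etc.) and is unnecessary here. This makes item~(i) of your obstacle list substantially easier than you expect.

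For the rigidity step, the paper's Nakanishi--Morawetz multiplier is even more restrictive than ``independent of $y$'': it acts only in a \emph{single} Euclidean coordinate $x_1$, ignoring both $y\in\T$ and $(x_2,\dots,x_d)$. The resulting one-dimensional estimate \eqref{na-mo}, combined with finite propagation speed (to control $|x(t)|$) and a lower bound on the localized potential energy coming from compactness (Lemma~\ref{lem:6.4}), suffices to rule out the critical element.
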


\begin{remark}\label{Main-rmk}
The scattering property can be proved (in both small and large data cases) for $\alpha$ lying between the $L^2-$ critical exponent on $\R^d$ and the $H^1-$critical one on $\R^d \times \mathbb T$. In fact, considering data which are constant in their compact variable, it is easy to see that for $\alpha< \frac{4}{d}$, the analysis is reduced to the 
$L^2-$subcritical case on $\R^d$, for which no scattering in energy space is available. 

One may ask whether \eqref{scatt} is still true in critical cases and/or in the focusing case; due to technical considerations involving tools we use in the paper, the result does not contain the any consideration about the aforementioned cases.

We quickly sum-up the interval where $\alpha$ should lie to fulfill our assumptions:
\begin{center}
		\begin{tikzpicture}
	\draw (0,0) -- (10,0);
	\node at (1,0) {$\bullet$};
	\node at (2,0) {$\bullet$};
	\node at (8,0) {$\bullet$};
	\node at (4,0) {$\bullet$};
	\node[below] at (1,0) {$0$};
	\node[below] at (2,0) {$\frac{4}{d+1}$};
	\node[below] at (8,0) {$\frac{4}{d-1}$};
	\node[below] at (4,0) {$\frac{4}{d}$};
	\draw[dashed,<->](2,-1) -- (8,-1);
	\draw[dashed,<->](4,0.5) -- (8,0.5);
	\node[below] at (6,1) {\tiny Scattering on $\R^d\times \mathbb{T}$};
	\node[below] at (6,-0.5) {\tiny Scattering on $\R^{d+1}$ };
	\end{tikzpicture}
	\\[4mm]
	Picture for $\alpha$ such that \autoref{main} holds, with $1\leq d\leq 4$.
	\\[3mm]
\end{center}

Let us also notice that in the small data context, one can deal with both critical exponents $\frac{4}{d}$ and $\frac{4}{d-1}$, either in focusing and defocusing cases. Therefore, for small data, it is possible to add $d=5, \alpha =1,$ which is energy critical (see \autopageref{thm:small} for details).
\end{remark}

\subsection{Outline of the paper}
We present how the proof of \autoref{main} has been organized.\\
In the first section \autoref{sec:Strichartz}, we briefly sketch the proof of suitable global in time Strichartz estimates on the whole product space. We then deduce global existence of the solution to the Cauchy problem \eqref{NLKG} before concluding the section with small data scattering results, as it is the first step of the concentration-compactness scheme. \autoref{sec:prof-dec} presents a profile decomposition theorem, which in turn exhibits the existence of a non-trivial minimal energy soliton-like solution to \eqref{NLKG} in \autoref{sec:critical element}. It is a global non-scattering solution enjoying some compactness property. After the construction of this minimal element, we finally prove in \autoref{sec:rigidity} that by means of Nakanishi/Morawetz type estimates, this solution cannot exist. 

\subsection{Notations} Along the paper, space variable $x$ refers to the euclidean component of the product space $\R^d\times\mathbb T$, while $y$ belongs to the compact part: therefore $(x,y)\in\R^d\times\mathbb T.$ Consequently the notation $\Delta_{\R^d}$ and $\Delta_{\mathbb T}$ is used when we consider the restrictions of $\Delta$ on $\R^d$ and $\mathbb T,$ respectively. By analogous meaning, $\nabla$ stands for the $(d+1)$-components vector $\nabla=(\nabla_x,\partial_y).$\\

\noindent With $L^p:=L^p(\R^d\times\mathbb T)$ we mean the usual Lebesgue spaces and $L^{p}_x$ and $L^p_y$ for $L^p(\R^d)$ and $L^p(\mathbb T)$ respectively. The same holds for the Hilbert space $H^s:=H^s(\R^d\times\mathbb T)$ with compact notations $H^s_x:=H^s(\R^d)$ and $H^s_y:=H^s(\mathbb T).$\\

\noindent The Bochner space $L^p(I;X)$ is classically defined as the space of functions $f:I\subseteq\R\to X$ having finite $L^p(I;X)$ norm, where 
\begin{equation*}
\|f\|_{L^p(I;X)}:=\left(\int_I\|f\|^p_X(t)\,dt\right)^{1/p}.
\end{equation*}
If $I=\R$ we simply write $L^pX.$ For any real $p\geq1,$ we denote with $p^\prime$  its conjugate given by $\frac{1}{p^\prime}=1-\frac1p.$ 
For a vector $(f,g)$ we write $(f,g)^T=\begin{pmatrix} f \\ g\end{pmatrix}$ when convenient.\\
\noindent We indicate by $\mathcal F$ and $\mathcal F^{-1}$ the Fourier transform and its inverse, respectively, with respect to the $x-$variable. \\

\noindent The expressions $A\lesssim B$ or $A\gtrsim B$ mean that there exists a constant $C>0$ such that $A\leq CB$ or $A\geq CB,$ respectively, while $A\sim B$ means that both previous relations hold true. \\

\noindent In conclusion, we recall that the the energy is a conserved quantity for \eqref{NLKG}, namely
\begin{equation}
	\label{energy}
E(t)=E(u(t)) := \dfrac{1}{2}\left(\|\d_t u(t)\|_{L^2}^2+\|\nabla u(t)\|_{L^2}^2+\|u(t)\|_{L^2}^2+\dfrac{2}{\alpha+2}\| u(t)\|_{L^{\alpha+2}}^{\alpha+2}\right)= E(0),\quad \forall\, t\in\R.
\end{equation}

\section{Strichartz estimates}\label{sec:Strichartz}

In this section we prove some Strichartz estimates on the whole product space, and deduce global existence of the solution to the Cauchy problem \eqref{NLKG} in our setting, before handling small data scattering results. These results are the first step to perform a concentration-compactness method in the subsequent sections.

\noindent In our framework, energy conservation is not enough to handle the global existence problem and we need global in time Strichartz estimates, for which we do not have any restriction on the euclidean dimension $d.$ These are stated in the following theorem. 
\begin{theorem}[Strichartz estimates]\label{thm:strichartz}
Let $d\in\mathbb N$ and $1\leq q, r \leq \infty$ such that $(q,r)$ satisfies
\begin{equation}
\label{pairs}
\left\lbrace \begin{array}{lll}
\dfrac{2q}{q-4} \leq r, & q\geq 4, & \textrm{ if } d=1  \\ \\
\dfrac{2dq}{dq-4} \leq r \leq \dfrac{2q(d+1)}{q(d-1)-2}, &q> 2, & \textrm{ if } d=2 \quad q\geq 2,  \textrm{ if } d\geq 3 \\
\end{array} .\right.
\end{equation}
Let $w\in \mathcal C(\R; H^1)\cap \mathcal C^1(\R; L^2)$ be the unique solution to the following nonlinear problem:
\begin{equation}\label{w}
\left\{\begin{aligned}
\partial_{tt}u-\Delta u+u&=F,\quad (t,x,y) \in \R\times \R^d\times \mathbb T\\
u(0,x,y)&=f\in H^1\\
\partial_tu(0,x,y)&=g\in L^2
\end{aligned}\right.,
\end{equation}
\noindent where $F=F(t,x,y)\in L^1L^2.$ Then the estimate below holds:
\begin{equation}
\|w\|_{L^qL^r} \leq C \left(\|f\|_{H^1} + \|g\|_{L^2} 
+ \|F\|_{L^1L^2}\right).
\end{equation}
\end{theorem}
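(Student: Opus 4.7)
The plan is to reduce the Strichartz estimate on $\R^d\times\T$ to a family of Strichartz estimates for Klein-Gordon equations on $\R^d$, one for each Fourier mode in the periodic variable $y$. Writing $w(t,x,y)=\sum_{n\in\Z}w_n(t,x)e^{iny}$ and similarly for $f,g,F$, each coefficient $w_n$ satisfies
\begin{equation*}
\d_{tt}w_n-\Delta_x w_n+(1+n^2)w_n=F_n,\qquad w_n(0)=f_n,\ \d_t w_n(0)=g_n,
\end{equation*}
a Klein-Gordon equation on $\R^d$ with effective mass $m_n:=\sqrt{1+n^2}\geq 1$. The problem thus reduces to summing a mode-by-mode family of Euclidean estimates.

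The second step is to invoke the known Strichartz theory on $\R^d$ for this family. For $d\geq 2$, wave-type Strichartz estimates (Keel--Tao applied to $e^{it\sqrt{m_n^2-\Delta_x}}$) give
\begin{equation*}
\|w_n\|_{L^q_tL^r_x}\lesssim \|f_n\|_{H^1_x}+\|g_n\|_{L^2_x}+\|F_n\|_{L^1_tL^2_x}
\end{equation*}
for every $(q,r)$ obeying the wave admissibility condition $\tfrac{1}{q}+\tfrac{d-1}{2r}\leq\tfrac{d-1}{4}$, which is exactly the lower bound on $r$ in \eqref{pairs}. For $d=1$, where the wave equation carries no dispersion, one instead exploits the genuine Klein-Gordon dispersive bound coming from the non-degenerate curvature of $\xi\mapsto\sqrt{1+\xi^2+n^2}$ at all frequencies, which supplies a full range of pairs with $q\geq 4$. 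In both cases one needs the implicit constant to be uniform in $n$; this holds because $m_n\geq 1$ makes $\sqrt{m_n^2-\Delta_x}$ dominate $\langle\nabla_x\rangle$, so the natural energy-space quantity $\|\sqrt{m_n^2-\Delta_x}f_n\|_{L^2_x}$ is controlled by $\|f_n\|_{H^1_x}$ with constants independent of $n$.

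To reassemble the mode-wise bounds into an estimate on $\R^d\times\T$, I would use Minkowski's inequality in $y$ combined with the Sobolev embedding $H^\sigma(\T)\hookrightarrow L^r_y$ with $\sigma=\tfrac12-\tfrac1r$. When $r=2$, Plancherel yields the estimate directly via $\ell^2_n$-summation. When $r>2$, the embedding introduces a weight $\langle n\rangle^\sigma$ that must be absorbed by the single $y$-derivative available in the $H^1(\R^d\times\T)$ norm of $f$ (and, for the other terms, in the $L^2$ norm of $g$ or $F$ through the parametrix); this requires $\sigma\leq 1$ and accounts for the upper bound on $r$ in \eqref{pairs}.

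The main technical obstacle is the simultaneous bookkeeping of (i) the Euclidean KG admissibility range (giving the lower bound on $r$), (ii) uniformity of the constants in the Fourier parameter $n$, and (iii) the derivative loss coming from Sobolev embedding on $\T$ (giving the upper bound on $r$). The range \eqref{pairs} is precisely the intersection on which all three constraints are compatible and no $y$-derivative of $f$ beyond those already encoded in $H^1(\R^d\times\T)$ is required.
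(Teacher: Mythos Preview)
Your overall architecture---Fourier decomposition in $y$, mode-by-mode Euclidean Klein--Gordon Strichartz, then Sobolev on $\T$ to reassemble---is exactly the paper's, but the execution has a genuine gap in step (iii). You assert that the mode-wise constants are uniform in $n$ and then propose to absorb the Sobolev weight $\langle n\rangle^\sigma$ (with $\sigma=\tfrac12-\tfrac1r$) into the $y$-derivative hidden in $\|f\|_{H^1(\R^d\times\T)}$. This fails: the $H^1(\R^d\times\T)$ norm splits as $\sum_n\langle n\rangle^2\|f_n\|_{L^2_x}^2+\sum_n\|\nabla_x f_n\|_{L^2_x}^2$, and the second sum carries no $n$-weight at all, so $\sum_n\langle n\rangle^{2\sigma}\|\nabla_x f_n\|_{L^2_x}^2$ is not controlled unless $\sigma=0$. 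For $g$ and $F$ the situation is worse---they lie only in $L^2$, with no $y$-derivative to spend, and the phrase ``through the parametrix'' does not supply one. Relatedly, the condition $\sigma\le 1$ you cite is vacuous ($\sigma<\tfrac12$ always) and cannot be the source of the upper bound on $r$.

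What is missing is the scaling step the paper carries out explicitly: the Strichartz estimate for mass $m_n^2=1+n^2$ is \emph{not} merely uniform in $n$ but actually \emph{gains} a factor $(1+n^2)^{\gamma/2}$ on the left-hand side, with $\gamma=\tfrac{d}{r}+\tfrac1q+1-\tfrac d2$, while on the right-hand side the natural mass-dependent energy is precisely $(1+n^2)^{1/2}\|f_n\|_{L^2_x}+\|f_n\|_{\dot H^1_x}+\|g_n\|_{L^2_x}+\|F_n\|_{L^1_tL^2_x}$, which sums in $\ell^2_n$ to the correct norms on $\R^d\times\T$. The upper bound in \eqref{pairs} is then exactly the condition $\gamma\ge\tfrac12-\tfrac1r$ needed for $H^\gamma(\T)\hookrightarrow L^r(\T)$. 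As a secondary point, the lower bound $r\ge\tfrac{2dq}{dq-4}$ is the Schr\"odinger-type Klein--Gordon admissibility $\tfrac2q\le d(\tfrac12-\tfrac1r)$, not the wave condition $\tfrac1q+\tfrac{d-1}{2r}\le\tfrac{d-1}{4}$ you state; in the paper this lower bound arises as $r\ge\rho$ after passing from the Besov target $B^s_{\rho,2}(\R^d)$ to $L^r(\R^d)$ by Sobolev embedding.
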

\begin{remark}\label{remark:strichartz}
Let us make some remarks about the conditions on the exponents.
The method we apply to obtain Strichartz estimates on the whole product space is used in \cite{HV} for the energy critical NLKG posed on $\R^d \times \mathcal{M}^2$. 
The method is divided into the following steps:
\begin{enumerate}
	\item we state the estimates on $\R^d$, involving Besov spaces;
	\item we use embedding theorems to deduce some estimates that hold in Lebesgue spaces posed on $\R^d;$
	\item we use a scaling argument to handle masses different from one;
	\item we write \eqref{w} in the basis of eigenfunctions of $\mathbb T$ and prove \autoref{thm:strichartz} in the fashion of \cite{TVI} 
	and \cite{HV}.
\end{enumerate}
In \cite{HV}, dealing with energy-critical nonlinearities, only critical embeddings were needed to prove small data scattering.
In our subcritical setting, one has to consider a wider range of Strichartz estimates to prove such results, obtained with ``subcritical'' embeddings. Deeper discussions about these estimates will be made along the proof of \autoref{thm:strichartz}.
\end{remark}

Global existence of the solution, namely  point $(1)$ of \autoref{main} then follows easily: a standard contraction principle 
performed on a small time interval $T=T(\|f\|_{H^1}+\|g\|_{L^2})$ implies local well-posedness on suitable Banach spaces. Energy conservation \eqref{energy} gives therefore global existence by time-stepping, since we are in the defocusing case.
\\
We do not write the details of the proof of global existence in this paper since it does not require any tricky computation.
\\

\noindent We are now able to deal with small data scattering problem.
\begin{theorem}[Small data scattering]
\label{thm:small}
Let $d=1$ and $\alpha\geq 4$ or $2\leq d\leq 5$ and $\alpha$ be such that $\frac{4}{d}\leq\alpha\leq\frac{4}{d-1}$.
Then there exists $\eps >0$ such that for all $(f,g)\in H^1\times L^2$ satisfying $\|f\|_{H^1}+\|g\|_{L^2}< \eps$,
the global nonlinear solution $u$ to the Cauchy problem \eqref{NLKG}
\begin{equation*}
u\in \mathcal C(\R; H^1)\cap \mathcal C^1(\R; L^2)\cap L^{\alpha+1}L^{2(\alpha+1)}
\end{equation*}
scatters in the sense of \eqref{scatt}.
\end{theorem}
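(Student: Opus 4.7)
The plan is to run a standard contraction on Duhamel's formula directly in a Strichartz space, using Theorem \ref{thm:strichartz} as a black box. Denoting by $S(t)$ the linear Klein-Gordon propagator on the energy space $H^1\times L^2$, I write the fixed-point map
\begin{equation*}
\mathcal{T}[u](t) := S(t)(f,g)^T - \int_0^t S(t-s)\bigl(0,|u|^\alpha u\bigr)(s)\,ds
\end{equation*}
and look for a fixed point of $\mathcal{T}$ in a small ball of the Banach space
\begin{equation*}
X := \bigl\{ u\in \mathcal{C}(\R;H^1)\cap \mathcal{C}^1(\R;L^2)\cap L^{\alpha+1}L^{2(\alpha+1)} \bigr\},
\end{equation*}
equipped with norm $\|u\|_X := \|u\|_{L^\infty H^1} + \|\partial_t u\|_{L^\infty L^2} + \|u\|_{L^{\alpha+1}L^{2(\alpha+1)}}$. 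A direct computation shows that the pair $(q,r)=(\alpha+1,2(\alpha+1))$ is admissible in the sense of \eqref{pairs}: for $d\geq 2$ the lower bound $r\geq \frac{2dq}{dq-4}$ is equivalent to $\alpha \geq 4/d$, while the upper bound $r\leq \frac{2q(d+1)}{q(d-1)-2}$ is equivalent to $\alpha \leq 4/(d-1)$; for $d=1$ the conditions $q\geq 4$ and $r\geq \frac{2q}{q-4}$ both reduce to $\alpha \geq 4$. In particular the pair covers the entire range of the theorem, including the energy-critical endpoint $(d,\alpha)=(5,1)$, where $(q,r)=(2,4)$ saturates the upper admissibility line.

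Combining Theorem \ref{thm:strichartz} at $(\alpha+1,2(\alpha+1))$ with the standard $H^1\times L^2$ energy estimate for the linear evolution (both controlled by the same source norm $\|F\|_{L^1L^2}$) gives
\begin{equation*}
\|\mathcal{T}[u]\|_X \leq C\bigl(\|f\|_{H^1}+\|g\|_{L^2}\bigr) + C\bigl\||u|^\alpha u\bigr\|_{L^1L^2}.
\end{equation*}
To close the estimate it suffices to apply H\"older in $(x,y)$ with exponents $\tfrac{2(\alpha+1)}{\alpha}, 2(\alpha+1)$ and then in $t$ with exponents $\tfrac{\alpha+1}{\alpha},\alpha+1$, obtaining
\begin{equation*}
\bigl\||u|^\alpha u\bigr\|_{L^1L^2} \leq \|u\|_{L^{\alpha+1}L^{2(\alpha+1)}}^{\alpha+1} \leq \|u\|_X^{\alpha+1},
\end{equation*}
together with the matching Lipschitz bound coming from $\bigl||u|^\alpha u - |v|^\alpha v\bigr| \lesssim (|u|^\alpha+|v|^\alpha)|u-v|$. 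Choosing the ball radius $\delta = 2C\eps$ and $\eps$ so small that $2C\delta^\alpha < 1$ makes $\mathcal{T}$ a strict contraction on the closed ball of radius $\delta$ in $X$, yielding the unique global solution with finite Strichartz norm.

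Once $u\in X$ is in hand, scattering is essentially immediate. Setting
\begin{equation*}
(f^+,g^+)^T := (f,g)^T - \int_0^{+\infty} S(-s)\bigl(0,|u|^\alpha u\bigr)(s)\,ds,
\end{equation*}
the integral converges absolutely in $H^1\times L^2$ because $S(t)$ is an isometry for the energy norm and $\||u|^\alpha u\|_{L^1L^2} < \infty$; applying $S(-t)$ to $(u(t),\partial_t u(t))$ and using Duhamel together with isometry yields
\begin{equation*}
\bigl\|(u(t),\partial_t u(t))-S(t)(f^+,g^+)^T\bigr\|_{H^1\times L^2} \lesssim \bigl\||u|^\alpha u\bigr\|_{L^1([t,+\infty);L^2)},
\end{equation*}
which vanishes as $t\to+\infty$ by dominated convergence; this is precisely \eqref{scatt}, and the $t\to-\infty$ case is symmetric. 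The only step that really merits care is the endpoint admissibility verification, since $\alpha=4/d$ sits exactly on the $L^2$-critical scaling on $\R^d$ (the lower edge of \eqref{pairs}) and $\alpha = 4/(d-1)$ on the $H^1$-critical exponent on $\R^{d+1}$ (the upper edge); the range of \eqref{pairs} has been tailored precisely to accommodate both extremes, so the argument closes without further subtlety.
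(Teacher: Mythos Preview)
Your proof is correct and follows essentially the same approach as the paper: both rely on the admissibility of the pair $(\alpha+1,2(\alpha+1))$ in Theorem~\ref{thm:strichartz}, the H\"older estimate $\||u|^\alpha u\|_{L^1L^2}\leq \|u\|_{L^{\alpha+1}L^{2(\alpha+1)}}^{\alpha+1}$, and the isometry of the propagator on $H^1\times L^2$ to conclude scattering via a Cauchy-in-$t$ argument on the Duhamel formula. Your version is in fact more self-contained, spelling out the contraction step that the paper delegates to~\cite{HV}; one tiny slip is that for $d=1$ the condition $q\geq 4$ alone gives only $\alpha\geq 3$, but the second condition $r\geq \tfrac{2q}{q-4}$ does force $\alpha\geq 4$, so your conclusion stands.
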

\begin{remark}
It is worth mentioning that the analysis for small initial data can be stated without any further restriction in the focusing case, namely replacing in \eqref{NLKG} the sign in front of the nonlinear term with a plus sign. Furthermore, observe that the result of the theorem above  is valid also in the critical cases. The main restriction on $\alpha$ is carried by the fact that $(\alpha+1, 2\alpha+2)$ should satisfy \eqref{pairs}. It is easy to check that $d=5, \alpha=1$ is the only case that can be handled for $d>4$ and it is critical.
\end{remark}
In order to prove \autoref{thm:strichartz} we recall the definition of the Besov spaces. Given a cut-off function $\chi_0$ such that 
\begin{equation*}
C^{\infty}_c(\R^d;\R)\ni\chi_0(\xi)=
\begin{cases}
1 & \text{if}\quad |\xi|\leq1\\
0 & \text{if}\quad |\xi|>2
\end{cases}
\end{equation*} 
\noindent then  are defined the following dyadic functions
\begin{equation*}
\varphi_j(\xi)=\chi_0(2^{-j}\xi)-\chi_0(2^{-j+1}\xi),
\end{equation*}
\noindent yielding to the partition of the unity $$\chi_0(\xi)+\sum_{j>0} \varphi_j(\xi)=1 \quad \forall \xi\in\R^d.$$

\noindent By denoting with $\mathcal S(\R^d)$ the set of all tempered distributions on $\R^d,$ and with $P_j,\, j\in\mathbb N\cup\{0\},$ are defined as the following operators:
\begin{align*}
 P_0 f & := \mathcal{F}^{-1}\left(\chi_0 \mathcal{F}(f)\right), \\
 P_j f & := \mathcal{F}^{-1}\left(\varphi_j \mathcal{F}(f)\right), \quad \forall j\in\mathbb N.
\end{align*}

\noindent Let $-\infty < s < \infty.$ Then, for $0<q\leq \infty $, the Besov space $B^s_{q,2}$ is defined by
  $$B^s_{q,2}(\R^d) = \left\lbrace f\in \mathcal S(\R^d) \Big|  
  \left\{2^{js}\|P_j f\|_{L^q(\R^d)}\right\}_{j\in\mathbb N\cup\{0\}}\in l^2  
  \right\rbrace,$$
\noindent where $l^2$ is the classical space of square-summable sequences. \\


\noindent We rigorously prove the steps listed in \autoref{remark:strichartz} in order to prove \autoref{thm:strichartz}.\\

\begin{proof}[Proof of \autoref{thm:strichartz}] We introduce the number $s\in [0,1]$ given by
\begin{equation}
\label{s}
s = 1- \dfrac{1}{2} \left(\dfrac{d}{2}+1\right)\left(\dfrac{1}{r'}-\dfrac{1}{r}\right)= 
1- \dfrac{1}{2} \left(\dfrac{d}{2}+1\right)\left(1-\dfrac{2}{r}\right). 
\end{equation}

\noindent \textbf{Step 1.} We begin with the following proposition which is given in a pure euclidean context. 
\begin{prop}[Strichartz estimates for the euclidean case (from \cite{NS11})]
\label{strichartzRd}
Let $d\geq 1,$ $s$ as in \eqref{s} and $q,\rho\geq 2$ such that 
	\begin{equation}\label{adm}
\begin{array}{ll}
    \dfrac{2}{q}=d\left(\dfrac{1}{2}-\dfrac{1}{\rho}\right) \quad&\textrm{ if }\quad d\geq3, \\
	q>2\quad&\textrm{ if }\quad d=2,  \\
	q\geq 4 \quad&\textrm{ if }\quad d=1.
	\end{array}
	\end{equation}
 
\noindent Consider $w=w(t,x)$ satisfying 
\begin{equation}\label{www}
\left\{\begin{aligned}
\partial_{tt}w-\Delta_{\R^d}w+w&=F,\quad (t,x) \in \R\times \R^d\\
w(0,x)&=f\in H^1(\R^d)\\
\partial_tw(0,x)&=g\in L^2(\R^d)
\end{aligned}\right.,
\end{equation}
where $F=F(t,x)\in L^1(\R;L^2(\R^d)).$ Then
\begin{equation}
\label{wRd}
\left\|w\right\|_{L^q(\R;{B^{s}_{\rho,2}(\R^d))}} \leq C \left( \|f\|_{H^1(\R^d)} + \|g\|_{L^2(\R^d)} 
+ \|F\|_{L^1(\R;L^2(\R^d))} \right),
\end{equation}
where $C>0$ depends only on the choice of the pair $(q,r)$ and on the dimension $d.$
\end{prop}
The proof is detailed in \cite{NS11}, using previous results from \cite{Brenner84, Brenner85, GV_NLKG_I, GV_decay, GV_NLKG_II, IMN11, KT, NOpisa, NS11, Pecher85} 
 (see for example \cite{GV_decay, GV_NLKG_I, GV_NLKG_II, KT, NS11} for the proofs) and \cite{KT} (see also \cite{MNO02,MNO03}) for the endpoint cases when $d\geq 3$.\\
 The estimates in previous works are more general: the source term can be handled in a ``dual'' Besov space. 
 We chose to handle the source term in the
  only homogeneous space we can work with, using a scaling method (see \cite{HV} for deeper discussions).
 \\
 
\noindent \textbf{Step 2.} We state the following embedding theorem contained in \cite{Triebel_II, Triebel_III} and references therein.
\begin{theorem}[Embedding theorems]
\label{embedding}
Let $d\geq 1$, $s>0$, and $2\leq r,\rho\leq \infty.$ Consider the Besov space $B^s_{\rho,2}(\R^d)$ and the Lebesgue space $L^r(\R^d)$. Then  the embedding relations below hold:
\begin{enumerate}
\item	$B^s_{\rho,2}(\R^d) \hookrightarrow L^{\rho}(\R^d);$
\item   If $\rho^*:=\dfrac{d\rho}{d-s\rho}$, when $d>s\rho^*,$ then 
$B^s_{\rho,2}(\R^d) \hookrightarrow L^{r}(\R^d)$ for $ \rho\leq r \leq \rho^*$;
\item If $d\leq s\rho$, then $B^s_{\rho,2}(\R^d) \hookrightarrow L^{r}(\R^d)$ for $ \rho\leq r <+\infty.$
\end{enumerate}




\end{theorem}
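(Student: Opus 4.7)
The plan is to prove all three embeddings using the Littlewood-Paley blocks $P_j$ already defined in the paper, together with Bernstein's inequality for frequency-localized functions. The two main analytic inputs are the Littlewood-Paley square function characterization of $L^\rho$ for $2 \leq \rho < \infty$, namely $\|f\|_{L^\rho} \lesssim \|(\sum_{j\geq 0} |P_j f|^2)^{1/2}\|_{L^\rho}$ (a consequence of the Mikhlin multiplier theorem), and Bernstein's inequality, which for $2 \leq \rho \leq q \leq \infty$ gives
\begin{equation*}
\|P_j f\|_{L^q(\R^d)} \lesssim 2^{j d (1/\rho - 1/q)} \|P_j f\|_{L^\rho(\R^d)},
\end{equation*}
proved by writing $P_j f$ as a convolution with a rescaled Schwartz kernel and applying Young's inequality.

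For part (1), I would combine the square function estimate with Minkowski's inequality (valid because $\rho \geq 2$, swapping the $L^\rho$ and $\ell^2$ norms) to obtain
\begin{equation*}
\|f\|_{L^\rho} \lesssim \left(\sum_{j \geq 0} \|P_j f\|_{L^\rho}^2\right)^{1/2}.
\end{equation*}
Since the Besov norm in the paper is nonhomogeneous (the sum runs over $j \geq 0$ only) and $s > 0$, each weight $2^{js} \geq 1$, so the right-hand side is dominated by $\|f\|_{B^s_{\rho,2}}$.

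For part (2), I would first handle the endpoint $r = \rho^*$: since $1/\rho^* = 1/\rho - s/d$, Bernstein gives $\|P_j f\|_{L^{\rho^*}} \lesssim 2^{js}\|P_j f\|_{L^\rho}$, and reapplying the square function estimate at $L^{\rho^*}$ (which is again $\geq 2$) reproduces the argument above to yield $\|f\|_{L^{\rho^*}} \lesssim \|f\|_{B^s_{\rho,2}}$. Intermediate $r \in (\rho, \rho^*)$ then follow either by complex interpolation between this endpoint and (1), or directly via Bernstein with the multiplier $2^{jd(1/\rho - 1/r) - js}$, which stays bounded on $j \geq 0$ precisely when $d(1/\rho - 1/r) \leq s$, i.e., $r \leq \rho^*$. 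For part (3), the assumption $d \leq s\rho$ forces $d(1/\rho - 1/r) < s$ for every finite $r \geq \rho$ (since $d/r > 0$), so the same multiplier decays exponentially; Cauchy-Schwarz applied to $\sum_j \|P_j f\|_{L^r}$ then closes the argument.

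The main technical obstacle in a fully self-contained treatment is the Littlewood-Paley square function equivalence itself, which rests on a vector-valued Mikhlin multiplier theorem and is genuinely nontrivial. Another point worth noting is that the endpoint $r = \infty$ is excluded from (3) because the borderline case $d = s\rho$ produces a logarithmic divergence in the Cauchy-Schwarz step; everything else reduces to routine combinations of Bernstein, Minkowski, and Cauchy-Schwarz, exactly as in the Triebel references cited in the statement.
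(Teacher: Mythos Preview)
Your argument is correct, but note that the paper does not actually prove this theorem: it is stated as Step~2 of the Strichartz-estimate proof and simply attributed to Triebel's monographs. The only justification the paper offers is the remark, immediately following the statement, that in the relevant parameter range one has $B^s_{\rho,2}(\R^d) \subset W^{s,\rho}(\R^d)$ (again cited from Triebel), after which the classical Sobolev embedding $W^{s,\rho}(\R^d) \subset L^r(\R^d)$ does the rest.

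Your route---the square-function characterization of $L^\rho$ for $\rho \in [2,\infty)$ combined with Bernstein's inequality---is a genuinely different and more direct argument. It has the advantage of being essentially self-contained once one accepts the Littlewood--Paley theorem, whereas the paper's sketch trades one black box (the embedding itself) for another (the comparison $B^s_{\rho,2} \subset W^{s,\rho}$, which for non-integer $s$ is itself a Mikhlin-type multiplier statement). On the other hand, the paper's route reduces everything to the most classical Sobolev inequality, which some readers may find more transparent. Both approaches ultimately rest on the same harmonic-analytic machinery.
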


\noindent Performing quick computations, we notice that for $s$ satisfying \eqref{s} and $(q,\rho)$ as in \eqref{adm}, the conditions of the theorem yield 
	\begin{equation*}
\begin{array}{ll}
    d-s\rho <0 \quad&\textrm{ if }\quad d= 1, \\
	d-s\rho=0\quad&\textrm{ if }\quad d= 2,  \\
	d-s\rho>0 \quad&\textrm{ if }\quad d\geq 3.
	\end{array}
	\end{equation*}
By \cite{Triebel}, we have in our setting $B^s_{\rho,2}(\R^d)\subset W^{s,\rho}(\R^d)$. Thus with Sobolev embeddings	
$W^{s,\rho}(\R^d) \subset L^{r}(\R^d)$ with 
\begin{equation*}
\begin{array}{ll}
r \in [\rho, \infty] \quad& \textrm{ if }\quad d=1 \\
r \in [\rho, \infty) \quad& \textrm{ if }\quad d=2 \\
r \in [\rho, \rho^*] \quad& \textrm{ if }\quad d\geq 3	
\end{array},
\end{equation*}
and computing $\rho, \rho^*$ in terms of $q$, we obtain
\begin{align}\label{condition1}
\dfrac{2dq}{dq-4}\leq r \qquad \qquad\qquad &\textrm{ if }d= 1,2, \\\label{condition2}
\dfrac{2dq}{dq-4}\leq r \leq \dfrac{2d^2q}{d^2q-2d-2dq+4} \quad &\textrm{ if }d\geq 3.
\end{align}	
Strichartz estimates involving Lebesgue spaces instead of Besov spaces follow immediately by applying the previous embedding theorem to \autoref{strichartzRd}, with $r$ satisfying \eqref{condition1} or \eqref{condition2}. 
\\
	
\noindent \textbf{Step 3.} By defining  $w_\lambda:=w\left(\sqrt{\lambda}t, \sqrt{\lambda}x\right) $ with $w$ as in \eqref{www} and noticing that it satisfies 
\begin{equation}\label{wlambda}
\left\{\begin{aligned}
\partial_{tt}w_\lambda-\Delta_{\R^d}w_\lambda+\lambda w_\lambda&=F_\lambda,\quad (t,x) \in \R\times \R^d\\
w_\lambda(0,\cdot)&=f_\lambda\\
\partial_tw_\lambda(0,\cdot)&=g_\lambda
\end{aligned}\right.,
\end{equation}
\noindent where \begin{equation*}
        f_\lambda(x) =  f\left(\sqrt{\lambda}x\right), \quad 
       g_\lambda(x) = \sqrt{\lambda}g\left(\sqrt{\lambda}x\right), \quad
       F_\lambda(t,x) = \lambda F\left(\sqrt{\lambda}t, \sqrt{\lambda}x\right),
       \end{equation*}
we can easily prove the next result, whose detailed proof can be found in \cite{HV}.
 \begin{prop}
Consider a pair $(q,\rho)$ as in \eqref{adm}, $s$ given by \eqref{s} and $r$ as in \autoref{embedding}.
 Consider $w$ given by \eqref{www} for which \autoref{strichartzRd} holds. Then one has for \eqref{wlambda}
   \begin{equation}
  \label{Slambda}
\lambda^{\frac{1}{2}\left(\frac{d}{r}+\frac{1}{q}-\frac{d}{2}+1\right)}\|w_\lambda\|_{L^{q}(\R;L^{r}(\R^d))} 
\leq C \left(\sqrt{\lambda}\|f_\lambda\|_{L^2(\R^d)}+  \|f_\lambda\|_{\dot{H}^1(\R^d)} + \|g_\lambda\|_{L^2(\R^d)} + 
 \|F_\lambda\|_{L^1(\R;L^2(\R^d))}\right).  
 \end{equation}
 \end{prop}

\noindent \textbf{Step 4.} Once we can rely on the ingredients of the previous steps, we finally use the strategy from \cite{TVI} to conclude with the desired result. We write $\left\lbrace \lambda_j  \right\rbrace_{j\geq0}$ for
the eigenvalues of $-\Delta_{\mathbb T}$, sorted in ascending order and taking in account their multiplicities; we also introduce $\left\lbrace \Phi_j(y) \right\rbrace_{j\geq0}$, the eigenfunctions associated with $\lambda_j$, i.e.
\begin{equation}
         \label{basis}
         -\Delta_{\mathbb T} \Phi_j = \lambda_j \Phi_j, \quad \lambda_j\geq 0,\quad j\in \mathbb N \cup \{0\}.
        \end{equation}
This provides an orthonormal basis of $L^2\left(\mathbb T\right)$.
We now consider the solution to \eqref{w}
 and we write the functions in terms of \eqref{basis}:
\begin{equation}\label{decompo}
\begin{aligned}
 w(t,x,y)  &= \sum_{j=0}^\infty w_j(t,x) \Phi_j(y), \\ F(t,x,y)  &= \sum_{j=0}^\infty F_j(t,x) \Phi_j(y), \\
  f(x,y) &= \sum_{j=0}^\infty f_j(x) \Phi_j(y), \\g(x,y)  &= \sum_{j=0}^\infty g_j(x) \Phi_j(y), 
\end{aligned}
\end{equation} 
with $w_j=w_j(t,x)$ satisfying
\begin{equation}
 \label{flatNLKG}
 \partial_{tt} w_j - \Delta_{\R^d} w_j + w_j + \lambda_j w_j = F_j, \quad w_j(0,\cdot)= f_j, \quad 
 \d_t w_j(0,\cdot)= g_j.
\end{equation}
Taking $\lambda = 1+\lambda_j$ in \eqref{Slambda} it follows that
\begin{multline*}
    \left(\lambda_j+1\right)^{\frac{1}{2}\left(\frac{d}{r}+\frac{1}{q}+1-\frac{d}{2}\right)} \|w_j\|_{L^{q}(\R;L^{r}(\R^d))}  \leq  C\left( 
    (\lambda_j+1)^{1/2} \|f_j\|_{L^2(\R^d)} + \|f_j\|_{\dot{H}^1(\R^d)}+\|g_j\|_{L^2(\R^d)} \right.\\ \left.+ \|F_j\|_{L^{1}(\R;L^{2}(\R^d))}\right).
 \end{multline*}
Then, summing in $j$ the squares as in \cite{TVI} and \cite{HV} one obtains 
\begin{multline*}
  \left\|\left(\lambda_j+1\right)^{\frac{1}{2}\left(\frac{d}{r}+\frac{1}{q}+1-\frac{d}{2}\right)}w_j\right\|_{l^2_jL^{q}(\R;L^{r}(\R^d))}  
  \leq C \left( \left\|\left(\lambda_j+1\right)^{1/2} f_j\right\|_{l^2_jL^2(\R^d)} +
  \|f_j\|_{l^2_j\dot{H}^1(\R^d)}+\|g_j\|_{l^2_jL^2(\R^d)} \right. \\ \left.+ \|F_j\|_{l^2_jL^{1}(\R;L^2(\R^d))}\right). 
\end{multline*}
Since $\max(1,2)\leq 2 \leq \min (q,\rho),$ Minkowski inequality can be applied
 \begin{equation*} 
 \left\|\left(\lambda_j+1\right)^{\frac{1}{2}\left(\frac{d}{r}+\frac{1}{q}+1-\frac{d}{2}\right)}w_j\right\|_{L^{q}(\R;L^{r}(\R^d))l^2_j}  
 \leq C \left( \left\|\left(\lambda_j+1\right)^{1/2} f_j\right\|_{L^2(\R^d)l^2_j} + \|g_j\|_{L^2(\R^d)l^2_j}
 +\left\|F_j \right\|_{L^{1}(\R;L^{2}(\R^d))l^2_j}\right), 
  \end{equation*}
  
and by the Plancherel identity 
 +  
 one is able to handle the $y-$variable to obtain
  \begin{align*}
   \left\|(1-\Delta_y)^{\frac{1}{2}\left(\frac{d}{r}+\frac{1}{q}+1-\frac{d}{2}\right)}w\right\|_{L^q(\R;L^r(\R^d)L^2(\mathbb T))} & \leq C \left( 
  \|f\|_{{H}^1(\R^d\times \mathbb T)} + \|g\|_{L^2(\R^d\times \mathbb T)}+\|F\|_{L^1(\R;L^2(\R^d\times \mathbb T))}\right)\\  
 \end{align*}
 \noindent which in turn implies 
 \begin{align*}  \left\|w\right\|_{L^q(\R;L^r_x(\R^d) H^{\gamma}_y(\mathbb T))} & 
   \leq C \left(\|f\|_{{H}^1(\R^d\times \mathbb T)} + \|g\|_{L^2(\R^d\times \mathbb T)}+\|F\|_{L^1(\R;L^2(\R^d)L^2(\mathbb T))}\right),
  \end{align*}
 where \begin{equation*}
        \gamma = \left(\frac{d}{r}+\frac{1}{q}+1-\frac{d}{2}\right).
       \end{equation*}
 We easily see that $\gamma \geq 0$ when $d=1,2$, and by computing the condition $\gamma\geq 0$ for $d\geq 3$, we have
 \begin{align*}
 	\gamma\geq 0 &\iff \dfrac{2dq+2r+2qr-dqr}{2qr}\geq 0 \\
 	&\iff 2dq+2r+2qr-dqr \geq 0 \\
 	& \iff r \leq \dfrac{2dq}{dq-2q-2}.
 \end{align*}
It is easy to check that $\dfrac{2dq}{dq-2q-2} \geq \rho^*$ - the latter being larger than $r$ - which established that under \eqref{condition1},\eqref{condition2}, $\gamma$ is always nonnegative.
The proof is then completed by using a Sobolev embedding available for $\gamma\geq0$ 
\begin{equation}
\label{SobolevCompact}
H^\gamma(\mathbb T) \hookrightarrow L^r(\mathbb T)
\end{equation}
 which holds (at least) under one of the following conditions:
\begin{equation}\label{condition3} 
\begin{array}{ll}
\bullet \quad 2\gamma<1,  \dfrac{2}{1-2\gamma} \geq r,
	& \textrm{which is the ``usual'' condition to have Sobolev embedding},\\ \\
	\bullet \quad 2\gamma \geq 1, \quad r \geq 2, 
	& \textrm{which ensures}\, \eqref{SobolevCompact}\,\textrm{ with\, } H^\gamma(\mathbb T)\hookrightarrow L^\infty(\mathbb T)\,
	\textrm{\,allowing to control any} \\ & \textrm{ $L^r$ norm with the $H^\gamma$ norm since $\mathbb T$ is of finite volume.}
\end{array}
\end{equation}
Then by gluing together all conditions \eqref{condition1},\eqref{condition2},\eqref{condition3} in terms of $q$, we exhibit the exponent $r$ for which the Strichartz estimates can be proved:
for $d=1$, since $\gamma >1/2$, we have $H^\gamma(\mathbb T) \hookrightarrow L^\infty(\mathbb T)$ and so
\begin{equation*}
	\dfrac{2q}{q-4}\leq r.
\end{equation*}
For $d\geq 2$
\begin{equation*}
	\dfrac{2dq}{dq-4}\leq r \leq \min\left\{\dfrac{2d^2q}{d^2q-2d-2dq+4}, \dfrac{2q(d+1)}{dq-q-2}\right\},
\end{equation*}
that is
\begin{equation*}
\dfrac{2dq}{dq-4}\leq r \leq \dfrac{2q(d+1)}{dq-q-2},
\end{equation*}
which concludes the proof of \autoref{thm:strichartz}.
\end{proof}
As already introduced before, the tool given by Strichartz estimates implies the small data scattering, which holds also in the critical cases. 
\begin{proof}[Proof of \autoref{thm:small}]
We recall that in the framework of \autoref{thm:small}, we consider $\frac{4}{d}\leq\alpha\leq \frac{4}{d-1}$ for $2\leq d\leq 5$ and $\a\geq4$ if $d=1$. We handle both focusing and defocusing nonlinearities arguing as in \cite{HV}.
\\ 
We rewrite \eqref{NLKG} in the vector form. More precisely if $u$ is a solution to \eqref{NLKG} then the vector $(u,\d_tu)^T$ satisfies 
\begin{equation*}\d_t\begin{pmatrix}
         u \\ \d_t u
        \end{pmatrix}
 = \begin{pmatrix}
    0 & 1 \\ -\Delta+1 & 0
   \end{pmatrix}\begin{pmatrix}
         u \\ \d_t u
        \end{pmatrix} + \begin{pmatrix}
         0 \\ \pm|u|^{\alpha}u
        \end{pmatrix}.
\end{equation*}
We have that the following exponential matrix operator  
\begin{equation}\label{exp-matrix} 
e^{tH}= \begin{pmatrix}
    \cos\left(t\cdot \sqrt{1-\Delta}\right) & \dfrac{\sin \left(t\cdot\sqrt{1-\Delta}\right)}{\sqrt{1-\Delta}} 
    \\ \\
    -\sin \left(t\cdot\sqrt{1-\Delta}\right)\cdot\left(\sqrt{1-\Delta}\right) &  \cos\left(t\cdot\sqrt{1-\Delta}\right)
   \end{pmatrix},
    \end{equation} 
    
\noindent    is unitary on the energy space $H^1\times L^2$ (see \cite{NS11}). Moreover   
\begin{equation*} 
\begin{aligned}\begin{pmatrix}
         u \\ \d_t u
        \end{pmatrix}
  = e^{tH}\begin{pmatrix}
         f \\ g
        \end{pmatrix}+       
   \int_0^te^{(t-s)H}\begin{pmatrix}
         0 \\ |u|^{\alpha}u
        \end{pmatrix}ds 
        \end{aligned}
\end{equation*}
and then, since $e^{tH}$ is skew self-adjoint
 \begin{equation*} 
\begin{aligned}
e^{-tH}\begin{pmatrix}
         u \\ \d_t u
        \end{pmatrix}=
  \begin{pmatrix}
         f \\ g
        \end{pmatrix}+       
   \int_0^te^{-sH}\begin{pmatrix}
         0 \\ |u|^{\alpha}u
        \end{pmatrix}ds.
\end{aligned}
\end{equation*}
We now write $ \vec V(t)= e^{-tH}\begin{pmatrix}
         u \\ \d_t u
        \end{pmatrix}$, and consider $0<\tau<t$. Then 
   \begin{equation*}
   \|\vec V(t)-\vec V(\tau)\|_{H^1\times L^2}\leq C \int_{\tau}^{t} \||u|^{\alpha}u (s)\|_{L^2} ds \leq C \|u\|^{\a+1}_{L^{\alpha+1}([\tau,t],L^{2(\alpha+1)})},
   \end{equation*}
    and it is obvious that $\|u\|^{\a+1}_{L^{\alpha+1}([\tau,t],L^{2(\alpha+1)})}$ tends to zero as $t,\tau$ tends to infinity, since the solution belongs to 
    $L^{\alpha+1}\left(\R;L^{2(\alpha+1)}\right)$.
    
     Therefore, there exist 
   $(f^\pm,g^\pm)\in H^1\times L^2$ such that $\vec V(t)\rightarrow \begin{pmatrix}
         f^\pm \\ g^\pm
        \end{pmatrix}$ in $H^1\times L^2$ as $t\rightarrow \pm \infty$.
\\[5mm]
\end{proof}
\section{Profile Decomposition Theorem}\label{sec:prof-dec}
In this section we follow the arguments of \cite{BV, NS11} to provide a profile decomposition theorem which is the main ingredient in the proof of scattering properties in the whole energy space. 
\\
We start with the following preliminary lemma.  We use the following convention  
\begin{equation}\label{sob}
2^*=\begin{cases} \frac{2(d+1)}{d-1},&\quad\text{if}\quad d\geq2\\
+\infty,&\quad\text{if}\quad d=1
\end{cases}.
\end{equation} 
\begin{lemma}\label{preli}
 Let $\{v_n(x,y)\}_{n\in\mathbb N}\subset H^{1}(\R^d\times\mathbb T),$ with $1\leq d\leq4,$ be a bounded sequence. Define the set 
\begin{equation}\label{def}
\begin{aligned}
\Lambda(v_n)=\{w(x,y)\in L^2\,\big|\, & \exists\, \{(x_n,y_n)\}_{n\in\mathbb N}\subset\mathbb{R}^d\times \mathbb T\quad\textit{such that,}\\
&\textit{up to subsequence,}\quad v_{n_k}(x-x_{n},y-y_n)\overset{L^2_{x,y}}\rightharpoonup w(x,y)\}
\end{aligned}
\end{equation}
and let 
\begin{equation}\label{lambda}
\lambda(v_n)=\sup_{w\in\Lambda(v_n)}\|w\|_{L^2_{x,y}}.
\end{equation}
Then, for any $q$ such that $2q\in(2,2^*)$ we have
\begin{equation*}
\limsup_{n\ra\infty}\|v_n\|_{L^{2q}_{x,y}}\lesssim \lambda^{e}(v_n)
\end{equation*}
\end{lemma}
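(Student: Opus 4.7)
My proof would follow the classical G\'erard-type ``inverse Sobolev'' lemma, adapted to the mixed geometry $\R^d\times\T$. The plan is to perform a Littlewood--Paley decomposition in both variables (Fourier cutoff in $\xi\in\R^d$ and in $k\in\Z$ on $\T$), write $v_n = P_{\leq N} v_n + P_{>N} v_n$ for a cutoff $N$ to be optimized at the end, estimate each piece separately, and then balance.

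\textbf{High-frequency piece.} Since $2q\in(2,2^*)$, Riesz--Thorin interpolation gives
\[
\|P_{>N} v_n\|_{L^{2q}} \leq \|P_{>N} v_n\|_{L^2}^{\theta}\,\|P_{>N} v_n\|_{L^{2^*}}^{1-\theta},
\]
with $\theta\in(0,1)$ determined by $\tfrac{1}{2q}=\tfrac{\theta}{2}+\tfrac{1-\theta}{2^*}$. Bernstein/Plancherel yields $\|P_{>N} v_n\|_{L^2}\lesssim N^{-1}\|v_n\|_{H^1}$ and the Sobolev embedding $H^1(\R^d\times\T)\hookrightarrow L^{2^*}(\R^d\times\T)$ gives $\|P_{>N} v_n\|_{L^{2^*}}\lesssim \|v_n\|_{H^1}$, so that $\|P_{>N} v_n\|_{L^{2q}}\lesssim N^{-\theta}$ after using the $H^1$-boundedness of $\{v_n\}$. (For $d=1$, where $2^*=+\infty$, one replaces $L^{2^*}$ by any $L^p$ with $2q<p<\infty$, which is still controlled by $H^1$.)

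\textbf{Low-frequency piece.} By H\"older, $\|P_{\leq N} v_n\|_{L^{2q}}\leq \|P_{\leq N} v_n\|_{L^\infty}^{(q-1)/q}\|P_{\leq N} v_n\|_{L^2}^{1/q}\lesssim \|P_{\leq N} v_n\|_{L^\infty}^{(q-1)/q}$. The key claim is
\[
\limsup_{n\to\infty}\|P_{\leq N} v_n\|_{L^\infty}\lesssim N^{(d+1)/2}\,\lambda(v_n).
\]
To prove it, pick $(x_n,y_n)\in\R^d\times\T$ almost attaining the $L^\infty$ norm of $P_{\leq N} v_n$, and consider the translates $\tilde v_n(x,y):=v_n(x+x_n,y+y_n)$. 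These are bounded in $H^1$, so up to a subsequence $\tilde v_n\rightharpoonup w$ weakly in $L^2$, with $w\in\Lambda(v_n)$ by construction (the definition \eqref{def} is insensitive to the sign of the shift). The kernel of $P_{\leq N}$ lies in $L^2(\R^d\times\T)$ (its Fourier transform is compactly supported in $\xi$ and a finite sum in $k$), so pointwise evaluation $f\mapsto (P_{\leq N} f)(0,0)$ is a continuous linear functional on $L^2$. Hence $(P_{\leq N}\tilde v_n)(0,0)\to (P_{\leq N} w)(0,0)$, while translation invariance of the Fourier multiplier gives $(P_{\leq N}\tilde v_n)(0,0)=(P_{\leq N} v_n)(x_n,y_n)$. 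Passing to the $\limsup$ and combining with Bernstein, $|(P_{\leq N} w)(0,0)|\leq \|P_{\leq N} w\|_{L^\infty}\lesssim N^{(d+1)/2}\|w\|_{L^2}\leq N^{(d+1)/2}\lambda(v_n)$, establishes the claim.

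\textbf{Optimization.} Putting the two pieces together,
\[
\limsup_n\|v_n\|_{L^{2q}}\lesssim N^{-\theta} + N^{(d+1)(q-1)/(2q)}\,\lambda(v_n)^{(q-1)/q},
\]
and balancing by choosing $N$ a suitable negative power of $\lambda(v_n)$ produces $\limsup_n\|v_n\|_{L^{2q}}\lesssim \lambda(v_n)^{e}$ with the explicit exponent $e=\tfrac{2\theta(q-1)}{2q\theta+(d+1)(q-1)}>0$, depending only on $q$ and $d$. The main technical obstacle is the translation argument in the low-frequency step: one needs the translation invariance of $P_{\leq N}$ in both the $\R^d$ and $\T$ components together with the weak-$L^2$ continuity of pointwise evaluation after the smoothing projection. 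The rest is the standard interpolation/Bernstein machinery.
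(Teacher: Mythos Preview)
Your argument is correct and follows the same overall strategy as the paper---a frequency decomposition, an $H^1$-based bound for the high-frequency tail, and a weak-convergence argument to control the low-frequency $L^\infty$ norm in terms of $\lambda$---but the implementation differs in a useful way. The paper localizes the two directions \emph{separately}: first a cutoff $\chi_R(|D_x|)$ at scale $R$ in the euclidean frequencies, and then a truncation at level $M$ in the $\T$-Fourier modes, so that three pieces must be estimated (high $x$-frequency via Hausdorff--Young and H\"older, the tail $|k|>M$ via Cauchy--Schwarz, and the doubly low-frequency block via the Dirichlet-kernel pairing), with a final optimization in both $R$ and $M$. Your joint Littlewood--Paley projector $P_{\leq N}$ on $\R^d\times\T$ collapses this to two pieces and a single parameter, with the high-frequency bound obtained cleanly by interpolation between $L^2$ and $L^{2^*}$. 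The gain is a shorter argument and in fact a sharper exponent: your optimization yields $e=\theta(q-1)/q$ with $\theta=\big((d+1)-q(d-1)\big)/(2q)$, which is strictly larger than the paper's $e=\frac{q-1}{3-5q}\cdot\frac{q(d-1)-(d+1)}{q}$. This discrepancy is harmless for the application, since only $e>0$ is used downstream in the profile decomposition, but strictly speaking you have proved a slightly different (stronger) inequality than the one stated. The one place to be a touch more careful is the subsequence bookkeeping in the low-frequency step: you should first pass to a subsequence along which $\|P_{\leq N}v_n\|_{L^\infty}$ converges to its $\limsup$, \emph{then} extract a further subsequence on which $\tilde v_n\rightharpoonup w$; otherwise the pointwise limit need not dominate the full $\limsup$.
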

\noindent where
\begin{equation*}
e=e(q,d)=\frac{q-1}{3-5q}\left(\frac{q(d-1)-(d+1)}{q}\right)>0.
\end{equation*}
\begin{proof}

By the Sobolev embedding theorem, see \cite{Heb1}, the energy space embeds continuously in the Lebesgue space $L^{2^*}$. In particular $H^1(\R^d\times\mathbb T)\hookrightarrow L^{2q}(\R^d\times\mathbb T)$ for any $q\in[1,2^*/2]$ if $d\geq2$ while $q\geq1$ if $d=1,$ where $2^*$ is defined in \eqref{sob}. Consider, as Fourier multiplier, a cut-off function in the flat-frequencies space $\R^d_\xi$ where the cut-off is given by 
\begin{equation*}
C^{\infty}_c(\R^d;\R)\ni\chi(\xi)=
\begin{cases}
1 & \text{if}\quad |\xi|\leq1\\
0 & \text{if}\quad |\xi|>2
\end{cases}.
\end{equation*} 
\noindent By setting $\chi_R(\xi)=\chi(\xi/R),$ $R>0,$ we define the pseudo-differential operator with symbol $\chi_R.$ It is given by $\chi_R(|D|)f=\mathcal F^{-1}(\chi_R\mathcal Ff)(x)$ and similarly we define the operator  $\tilde{\chi}_R(|D|)$ with the associated symbol given by $\tilde{\chi}_R(\xi)=1-\chi_R(\xi).$ Later on we will also use the well-known properties 
\begin{equation}\label{fourier-prop}
\begin{aligned}
\mathcal F(fg)&=\mathcal F(f)\star\mathcal F(g)\\ 
\mathcal F(f(\sigma\cdot))&=\sigma^{-d}\mathcal Ff(\cdot/\sigma)
\end{aligned}
\end{equation}
which hold for any smooth functions $f,g:\R^d\to\R$ and any nonnegative real number $\sigma.$
In order to applye the Hausdorff-Young inequality $\mathcal F:L^p\to L^{p^\prime}$ for any $p\in[1,2],$ we set $2q=p^\prime$ and $p=\frac{p^\prime}{p^\prime-1}=\frac{2q}{2q-1}\in(1,2)$. We then use H\"older inequality with $\frac 1p=\frac 12+\frac1r$ and exploiting the precise structure of $\mathbb T$ we can write, for every $n\in\mathbb N,$
\begin{equation}\label{deco}
v_n(x,y)=\sum_{k\in\mathbb Z}v^k_n(x)e^{iky},
\end{equation}
where the functions $v^k_n$ are the Fourier coefficients, and similarly  
\begin{equation*}
\tilde{\chi}_R(|D|)v_n(x,y)=\sum_{k\in\mathbb Z}\tilde{\chi}_R(|D|)v^k_n(x)e^{iky}.
\end{equation*}
We first notice the embedding $ H^{\frac{1}{2}-\frac{1}{2q}}(\T) \hookrightarrow L^{2q}(\T)$ allowing us to write
\begin{equation}\label{not-localiz}
\begin{aligned}
\|\tilde{\chi}_R(|D|)v_n\|^2_{L^{2q}_{x,y}}&\lesssim\|\tilde{\chi}_R(|D|)v_n\|^2_{L^{2q}_xH_y^{\frac{1}{2}-\frac{1}{2q}}}=
\left\|\sum_{k\in\mathbb Z}\langle k\rangle^{1-\frac{1}{q}}|\tilde{\chi}_R(|D|)v^k_n|^2\right\|_{L^q_x},\\
&\lesssim\sum_{k\in\mathbb Z}\langle k\rangle^{1-\frac{1}{q}}\|\tilde{\chi}_R(|D|)v^k_n\|^2_{L^{2q}_x}\lesssim\sum_{k\in\mathbb Z}\langle k\rangle^{1-\frac{1}{q}}\|\mathcal{F}^{-1}(\tilde{\chi}_R(|\xi|)\hat{v}^k_n)(x)\|^2_{L^{2q}_x},\\
&\lesssim\sum_{k\in\mathbb Z}\langle k\rangle^{1-\frac{1}{q}}\|\tilde{\chi}_R(|\xi|)\hat{v}^k_n(\xi)\|^2_{L^{2q/(2q-1)}_\xi},\\
&\lesssim\sum_{k\in\mathbb Z}\langle k\rangle^{1-\frac{1}{q}}\|\langle \xi\rangle^{\frac{1}{2}+\frac{1}{2q}}\hat{v}^k_n(\xi)\|^2_{L^2_\xi}
\|\tilde{\chi}_R(|\xi|)\langle\xi\rangle^{-\frac{1}{2}-\frac{1}{2q}}\|^2_{L^{2q/(q-1)}_\xi},\\
\end{aligned}
\end{equation}
where an H\"older inequality was used in the last step. 
We notice that the last factor in the r.h.s. term is easily controlled as follows:
\begin{equation*}
\begin{aligned}
\|\tilde{\chi}_R(|\xi|)\langle\xi\rangle^{-\frac{1}{2}-\frac{1}{2q}}\|^2_{L^{2q/(q-1)}_\xi}&\lesssim\left(\int_{|\xi|\geq R}\frac{d\xi}{(1+|\xi|^2)^{\left(\frac14+\frac {1}{4q}\right)\left(\frac{2q}{q-1}\right)}}\right)^{\frac{q-1}{q}},\\
&\lesssim\left(\int_{|\xi|\geq R}\frac{d\xi}{|\xi|^{\frac{q+1}{q-1}}}\right)^{\frac{q-1}{q}},\\
&\lesssim\left(\int_R^{\infty}\frac{1}{\rho^{\frac{q+1}{q-1}-d+1}}\,d\rho\right)^{\frac{q-1}{q}},\\
&\lesssim \left(R^{d-\frac{q+1}{q-1}}\right)^{\frac{q-1}{q}}= R^{\frac{d(q-1)}{q}-\frac{q+1}{q}},
\end{aligned}
\end{equation*}
where the integrability of the term has been checked and $\frac{d(q-1)}{q}-(1+\frac{1}{q})<0.$ 
Thus, by the Plancherel identity, it ay be concluded that the estimate \eqref{not-localiz} satisfies:
\begin{equation*}
\begin{aligned}
\|\tilde{\chi}_R(|D|)v_n\|^2_{L^{2q}_{x,y}}&\lesssim R^{\frac{d(q-1)}{q}-(1+\frac{1}{q})}\sum_{k\in\mathbb Z}\langle k\rangle^{1-\frac{1}{q}}\int\langle\xi\rangle^{1+\frac{1}{q}}|\hat{v}^k_n(\xi)|^2\,d\xi,\\
&\lesssim R^{\frac{d(q-1)}{q}-(1+\frac{1}{q})}\|v_n\|_{H^1_{x,y}}^2.
\end{aligned}
\end{equation*} 
Recalling that $	\{v_n\}_{n\in\mathbb N}$ is bounded in $H^1,$ we summarize with 
\begin{equation*}
\begin{aligned}
\|\tilde{\chi}_R(|D|)v_n\|_{L^{2q}_{x,y}}&\lesssim R^{\frac{d(q-1)}{2q}-\frac{q+1}{2q}}=R^{\frac{q(d-1)-(d+1)}{2q}}.
\end{aligned}
\end{equation*} 

\noindent We now use \eqref{deco} and we define the localized part of $v_n$ as
\begin{equation*}
\chi_R(|D|)v_n(x,y)=\sum_{|k|\leq M}\chi_R(|D|)v_n^k(x)e^{iky}+\sum_{|k|>M}\chi_R(|D|)v_n^k(x)e^{iky}:=\chi^{\leq M}_R(|D|)v_n+\chi^{>M}_R(|D|)v_n.
\end{equation*}
We estimate the tail $\chi^{>M}_R(|D|)v_n$ as follows. By means of Minkowski and Cauchy-Schwartz inequalities we get

\begin{equation*}
\begin{aligned}
\|\chi^{>M}_R(|D|)v_n\|_{L^2_{x,y}}&\leq C(Vol(\mathbb T))\sum_{|k|>M}\|\chi_R(|D|)v_n^k(x)\|_{L^2_x},\\
&\lesssim\left(\sum_{|k|>M}\frac{1}{k^2}\right)^{1/2}\left(\sum_{|k|>M}k^2\|\chi_R(|\xi|)\hat{v}^k_n(\xi)\|_{L^2_\xi}^2\right)^{1/2},\\
&\lesssim\left(\sum_{|k|>M}\frac{1}{k^2}\right)^{1/2}\left(\sum_{|k|>M}k^2\|\hat{v}^k_n(\xi)\|_{L^2_\xi}^2\right)^{1/2},\\
&\lesssim \left(\sum_{|k|>M}\frac{1}{k^2}\right)^{1/2}\|v_n\|_{L^2_xH^1_y}\lesssim \left(\sum_{|k|>M}\frac{1}{k^2}\right)^{1/2}.
\end{aligned}
\end{equation*}

\noindent Since 
\begin{equation*}
\sum_{k=M+1}^\infty a_k\leq \int_{M}^\infty f(x)\,dx
\end{equation*}
where $f:\R\to\R^+$ is a decreasing function such that $a_k=f(k),$ (it is assumed here that $f(x)=x^{-2}$) then
\begin{equation*}
\left(\sum_{|k|>M+1} k^{-2}\right)^{1/2}\lesssim M^{-1/2}
\end{equation*}
and so 
\begin{equation*}
\|\chi^{>M}_R(|D|)v_n\|_{L^2_{x,y}}\lesssim M^{-1/2}.
\end{equation*}
The following interpolation result is a straightforward classical application of the H\"older inequality.
\begin{lemma}\label{int-lem} 
Let $p_1\leq p\leq p_2$ and $f\in L^{p_1}\cap L^{p_2}.$ Then $f\in L^p$ and given $\theta\in[0,1]$ such that $\frac1p=\frac{\theta}{p_1}+\frac{1-\theta}{p_2}$ the following estimates holds: 
\begin{equation*}
\|f\|_{L^p}\leq\|f\|_{L^{p_1}}^\theta\|f\|_{L^{p_2}}^{1-\theta}
\end{equation*}
\end{lemma}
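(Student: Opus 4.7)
The statement is the classical log-convexity of the $L^p$-norm; my plan is to reduce it to a single application of H\"older's inequality on the pointwise factorisation $|f|^p=|f|^{p\theta}\cdot|f|^{p(1-\theta)}$. The two exponents used for H\"older are chosen precisely so that the resulting integrals are $\int|f|^{p_1}$ and $\int|f|^{p_2}$, which is where the interpolation identity $\frac{1}{p}=\frac{\theta}{p_1}+\frac{1-\theta}{p_2}$ enters.

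First I would dispatch the trivial endpoint cases $\theta=0$ and $\theta=1$, for which the inequality collapses to an equality. Assuming $\theta\in(0,1)$ and $p_2<\infty$, set
\begin{equation*}
a=\frac{p_1}{p\theta},\qquad b=\frac{p_2}{p(1-\theta)}.
\end{equation*}
Both $a$ and $b$ are in $(1,\infty)$, and the interpolation relation gives
\begin{equation*}
\frac{1}{a}+\frac{1}{b}=\frac{p\theta}{p_1}+\frac{p(1-\theta)}{p_2}=p\left(\frac{\theta}{p_1}+\frac{1-\theta}{p_2}\right)=1,
\end{equation*}
so $(a,b)$ is a H\"older conjugate pair. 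Applying H\"older to the product $|f|^{p\theta}\cdot |f|^{p(1-\theta)}$ yields
\begin{equation*}
\int|f|^{p}\,dx\le \left(\int|f|^{p\theta a}\,dx\right)^{1/a}\left(\int|f|^{p(1-\theta)b}\,dx\right)^{1/b}=\|f\|_{L^{p_1}}^{p\theta}\,\|f\|_{L^{p_2}}^{p(1-\theta)},
\end{equation*}
and taking $p$-th roots gives the desired bound; this also justifies $f\in L^p$ since the right-hand side is finite by hypothesis.

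For the endpoint $p_2=\infty$, one uses the pointwise bound $|f|^{p(1-\theta)}\le\|f\|_{L^\infty}^{p(1-\theta)}$ together with $p\theta=p_1$ (which is exactly what the interpolation relation becomes when $p_2=\infty$), and integrates to conclude in the same fashion. There is essentially no obstacle to the argument; the only point requiring a moment of care is verifying that the H\"older exponents $(a,b)$ are admissible ($>1$) when $\theta\in(0,1)$, which follows immediately from $p_1\le p\le p_2$.
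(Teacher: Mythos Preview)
Your proof is correct and is precisely the ``straightforward classical application of the H\"older inequality'' that the paper invokes without further detail; the factorisation $|f|^p=|f|^{p\theta}|f|^{p(1-\theta)}$ with conjugate exponents $a=p_1/(p\theta)$, $b=p_2/(p(1-\theta))$ is the standard argument the authors had in mind.
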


\noindent Therefore by \autoref{int-lem} we have 
\begin{equation}\label{eq:2}
\begin{aligned}
\|\chi^{>M}_R(|D|)v_n\|_{L^{2q}_{x,y}}&\leq\|\chi^{>M}_R(|D|)v_n\|_{L^{2}_{x,y}}^{\theta}\|\chi^{>M}_R(|D|)v_n\|^{1-\theta}_{L^{2^*}_{x,y}},\\
&\lesssim\|\chi^{>M}_R(|D|)v_n\|_{L^{2}_{x,y}}^{\frac{(d+1)}{2q}-\frac{d-1}{2}}\lesssim M^{-\frac12\left(\frac{(d+1)}{2q}-\frac{d-1}{2}\right)}.
\end{aligned}
\end{equation}
\noindent It remains to estimate the term $\sum_{|k|\leq M}\chi_R(|D|)v_n^k(x)e^{iky}.$ Denoting by $D_M$ the Dirichlet Kernel $$D_M(y)=\sum_{k=-M}^Me^{iky},$$
we can write 
\begin{equation*}
\begin{aligned}
\chi_R^{\leq M}(|D|)v_n(x,y)=\sum_{|k|\leq M}\chi_R(|D|)v_n^k(x)e^{iky}&=\int_{\mathbb T}\chi_R(|D|)v_n(x,z)D_M(y-z)\,dz,\\
\end{aligned}
\end{equation*}
and we choose a sequence $(x_n,y_n)\in\R^d\times\mathbb T$ such that 
\begin{equation*}
\begin{aligned}
\|\chi_R^{\leq M}(|D|)v_n\|_{L^\infty_{x,y}}&\leq2\left|\chi_R^{\leq M}(|D|)v_n(x_n,y_n)\right|\\
&=2R^d\left|\int_{\R^d\times\mathbb T}\eta(Rx)D_M(y)v_n(x-x_n,y-y_n)\,dy\,dx\right|,
\end{aligned}
\end{equation*}
\noindent where $R^d\eta(Rx)=\mathcal F^{-1}(\chi_R(|\xi|)).$ Observe that $\eta(Rx)D_M(y)$ is a function in $L^2_{x,y}$ and that $\|\eta(Rx)D_M(y)\|_{L^2}\lesssim R^{-d/2}M\|\eta\|_{L^2}.$ Up to subsequences, from \eqref{def} and \eqref{lambda} we get 
\begin{equation*}
\begin{aligned}
\limsup_{n\to\infty}\|\chi_R^{\leq M}(|D|)v_n\|_{L^\infty_{x,y}}&\leq\limsup_{n\to\infty}2R^d\left|\int_{\R^d\times\mathbb T}\eta(Rx)D_M(y)v_n(x-x_n,y-y_n)\,dy\,dx\right|,\\
&=2R^d\left|\int_{\R^d\times\mathbb T}\eta(Rx)D_M(y)w(x,y)\,dy\,dx\right|,\\
&\leq2R^{d/2}M\lambda\|\eta\|_{L^2}\lesssim R^{d/2}M\lambda,
\end{aligned}
\end{equation*}
thus, again by interpolation, we infer that 
\begin{equation*}
\begin{aligned}
\|\chi_R^{\leq M}(|D|)v_n\|_{L^{2q}_{x,y}}\lesssim\|\chi_R^{\leq M}(|D|)v_n\|^{1-1/q}_{L^\infty_{x,y}}\|\chi_R^{\leq M}(|D|)v_n\|^{1/q}_{L^2_{x,y}}\lesssim \|\chi_R^{\leq M}(|D|)v_n\|^{1-1/q}_{L^\infty_{x,y}},
\end{aligned}
\end{equation*}
and then 
\begin{equation}\label{eq:3}
\begin{aligned}
\limsup_{n\to\infty}\|\chi_R^{\leq M}(|D|)v_n\|_{L^{2q}_{x,y}}\lesssim R^{\frac d2\left(\frac{q-1}{q}\right)}M^{\frac{q-1}{q}}\lambda^{\frac{q-1}{q}}.
\end{aligned}
\end{equation}
Combining \eqref{not-localiz},\eqref{eq:2} and \eqref{eq:3}, we obtain 
\begin{equation*}
\begin{aligned}
\limsup_{n\to\infty}\|v_n\|_{L^{2q}_{x,y}}\lesssim R^{\frac{q(d-1)-(d+1)}{2q}}+M^{\frac12\left(\frac{q(d-1)-(d+1)}{2q}\right)}+R^{\frac d2\left(\frac{q-1}{q}\right)}M^{\frac{q-1}{q}}\lambda^{\frac{q-1}{q}},
\end{aligned}
\end{equation*}
and by choosing $M\sim R^2$ we end up with 
\begin{equation*}
\begin{aligned}
\limsup_{n\to\infty}\|v_n\|_{L^{2q}_{x,y}}\lesssim R^{\frac{q(d-1)-(d+1)}{2q}}+\left(R^{\frac{d+4}{2}}\lambda\right)^{\frac{q-1}{q}}.
\end{aligned}
\end{equation*}
We now consider as radius $R=\lambda^\beta,$ and so
\begin{equation*}
\begin{aligned}
\limsup_{n\to\infty}\|v_n\|_{L^{2q}_{x,y}}\lesssim \lambda^{ \beta \left(\frac{q(d-1)-(d+1)}{2q}\right)}+\left(\lambda^{\beta\left(\frac{d+4}{2}\right)+1}\right)^{\frac{q-1}{q}}.
\end{aligned}
\end{equation*}
Defining now $\beta$ in this way:
\begin{equation*}
\begin{aligned}
 &\beta \left(\frac{q(d-1)-(d+1)}{2q}\right)=\frac{q-1}{q}\left(\beta\left(\frac{d+4}{2}\right)+1\right),\\
 &\iff\beta\left(\frac{q(d-1)-(d+1)}{2q}-\frac{q-1}{q}\frac{d+4}{2}\right)=\frac{q-1}{q},\\
 &\iff \beta(q(d-1)-(d+1)-(q-1)(d+4))=2(q-1),\\
 &\iff \beta(3-5q)=2(q-1)\iff \beta=\frac{2(q-1)}{3-5q},
\end{aligned}
\end{equation*}
we observe that $\beta=\frac{2(q-1)}{3-5q}<0,$ and conclude with 
\begin{equation*}
\begin{aligned}
\limsup_{n\to\infty}\|v_n\|_{L^{2q}_{x,y}}\lesssim \lambda^{\frac{q-1}{3-5q}\left(\frac{q(d-1)-(d+1)}{q}\right)}.
\end{aligned}
\end{equation*}
\end{proof}


\begin{remark} We notice that $w$ actually belongs to  $H^1_{x,y}$ since the weak limit clearly enjoys this regularity.
\end{remark}
\vspace{2mm}
We now fix some notations 
used in the following part. We define with $v(t,x,y)$ or simply $v(t)$ the free evolution with respect to the linear Klein-Gordon equation, with Cauchy datum $\vec v^0=(v_0,v_1)$ and we define by $\vec v(t)=e^{tH}\vec v^0=(v(t),\partial_tv(t))^T,$ where $e^{tH}$ has been introduced in \eqref{exp-matrix}. Then, we give the following decomposition for a time-independent bounded sequence in $H^1\times L^2.$ We first introduce the following lemma which will be useful after.\\

To shorten notation, we write from now on $\mathcal H=H^1\times L^2.$ \\

\begin{lemma}\label{lem:4.6}
Let $\vec f_n\rightharpoonup0$ in $\mathcal H$. Then we have:
\begin{itemize}
\item\label{cond2.15} $t_n\ra\bar t\in \R \implies e^{t_nH} \vec f_n(x,y)\rightharpoonup0$ in $\mathcal H,$
\item\label{cond2.14} $e^{(t^2_n-t^1_n)H} \vec f_n(x-(x_n^1-x_n^2),y)\rightharpoonup \vec g\neq0\implies |t_n^2-t_n^1|+|x_n^2-x_n^1|\rightarrow+\infty.$
\end{itemize} 
\end{lemma}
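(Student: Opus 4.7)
The plan is to treat both items via the same principle: $\{e^{tH}\}_{t\in\R}$ is a unitary group on $\mathcal H$ (see \eqref{exp-matrix}), and together with the group $(\tau_a f)(x,y)=f(x-a,y)$ of spatial translations in the $\R^d$ variable, both act in a strongly continuous manner on $\mathcal H$. One transfers the weak convergence $\vec f_n\rightharpoonup 0$ onto the operators by duality; the strong continuity of the groups then allows one to pass the parameters to their limits.

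For item (i), I would fix an arbitrary $\vec\phi\in\mathcal H$ and, using that $e^{tH}$ is skew self-adjoint on $\mathcal H$, write
\begin{equation*}
\langle e^{t_nH}\vec f_n,\vec\phi\rangle_{\mathcal H}
=\langle \vec f_n, e^{-t_nH}\vec\phi\rangle_{\mathcal H}
=\langle \vec f_n, e^{-\bar tH}\vec\phi\rangle_{\mathcal H}+\langle \vec f_n,(e^{-t_nH}-e^{-\bar tH})\vec\phi\rangle_{\mathcal H}.
\end{equation*}
The first term tends to zero because $\vec f_n\rightharpoonup 0$ tested against the fixed vector $e^{-\bar tH}\vec\phi\in\mathcal H$. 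The second term is dominated by $\|\vec f_n\|_{\mathcal H}\,\|(e^{-t_nH}-e^{-\bar tH})\vec\phi\|_{\mathcal H}$; the $\mathcal H$-norm of $\vec f_n$ is uniformly bounded since weakly convergent sequences are norm-bounded, while the factor involving $\vec\phi$ tends to zero by strong continuity of $t\mapsto e^{tH}\vec\phi$ at $t=\bar t$.

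For item (ii), I would argue by contrapositive. Suppose that, up to a subsequence, $t_n^2-t_n^1\to\bar t\in\R$ and $x_n^1-x_n^2\to\bar x\in\R^d$. For any $\vec\phi\in\mathcal H$, a change of variables in the $x$-integral combined with the skew self-adjointness of $e^{tH}$ yields
\begin{equation*}
\langle e^{(t_n^2-t_n^1)H}\tau_{x_n^1-x_n^2}\vec f_n,\vec\phi\rangle_{\mathcal H}
=\langle \vec f_n,\tau_{-(x_n^1-x_n^2)}e^{-(t_n^2-t_n^1)H}\vec\phi\rangle_{\mathcal H}.
\end{equation*}
Since translations in $x$ are strongly continuous on both $H^1(\R^d\times\mathbb T)$ and $L^2(\R^d\times\mathbb T)$ and $\{e^{tH}\}$ is strongly continuous on $\mathcal H$, the right-hand test vector converges strongly to $\tau_{-\bar x}e^{-\bar tH}\vec\phi$ in $\mathcal H$. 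Combining this with the uniform bound on $\|\vec f_n\|_{\mathcal H}$ and the weak convergence $\vec f_n\rightharpoonup 0$ (via the same splitting as in (i)), the inner product passes to zero. Hence $\vec g=0$, contradicting the standing hypothesis $\vec g\neq 0$.

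The only mild subtleties are the strong continuity of spatial translations on $\mathcal H$ and of $\{e^{tH}\}$, both of which are standard (by density of $C_c^\infty$ for the former, and by the functional-calculus representation \eqref{exp-matrix} for the latter); no real obstacle arises, and the argument is essentially an exercise in unitary-group manipulations combined with the boundedness of weakly convergent sequences.
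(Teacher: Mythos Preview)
Your proposal is correct and follows essentially the same approach as the paper: move the propagator (and, for item (ii), the spatial translation) onto the test vector by unitarity/skew self-adjointness, then split into a term that vanishes by weak convergence against a fixed vector and a term controlled by the uniform bound on $\|\vec f_n\|_{\mathcal H}$ times the strong continuity of $t\mapsto e^{tH}\vec\phi$ and $a\mapsto\tau_a\vec\phi$. Item (ii) is handled contrapositively in both cases.
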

\begin{proof}
For the first point, we make use of the continuity property of the propagator: by denoting by $(\cdot, \cdot)_{\mathcal H}$ the scalar product in $\mathcal H,$ for any $\vec\psi\in\mathcal H$ we have 
\begin{equation*}
\begin{aligned}
(e^{t_nH}\vec f_n,\vec\psi)_{\mathcal H}&=(e^{t_nH}\vec f_n-e^{\bar{t}H}\vec f_n,\vec\psi)+(\vec f_n,e^{-\bar{t}H}\vec\psi)_{\mathcal H},\\
&=(\vec f_n,e^{-t_nH}\vec\psi-e^{\bar tH}\vec\psi)_{\mathcal H}+(\vec f_n(x,y),e^{-\bar{t}H}\vec\psi)_{\mathcal H},\\
&=(\vec f_n,e^{-t_nH}\vec\psi-e^{\bar tH}\vec\psi)_{\mathcal H}+o(1).
\end{aligned}
\end{equation*}
The conclusion follows, up to subsequences, since it holds in the $L^2\times L^2$ topology by exploiting the continuity of the flow.

The second point is proved in its contrapositive form. Suppose  that $s_n:=(t_n^2-t_n^1)$ and $\xi_n:=(x_n^2-x_n^1)$ are bounded. Then, up to subsequences, $s_n\ra s\in \R$ and $\xi_n\ra\bar\xi\in\R^d.$ We prove that  $e^{s_nH} \vec f_n(x-\xi_n,y)\rightharpoonup0$ in $\mathcal H.$ But as before
\begin{equation}
\begin{aligned}
(e^{s_nH} \vec f_n(x-\xi_n,y),\vec\psi)_{\mathcal H}&=(e^{sH} \vec f_n(x,y),\vec\psi(x+\xi,y))_{\mathcal H}+o(1)=(\vec f_n(x,y),e^{-sH}\vec\psi(x+\xi,y))_{\mathcal H}+o(1)\rightharpoonup0.
\end{aligned}
\end{equation}
\end{proof}
We can now state the following result, whose iteration will give the Profile Decomposition Theorem. 
\begin{prop}\label{preliminary-prop}
Let $\{\vec v_n^0\}_n$ be a bounded sequence in $\mathcal H$ and $1\leq d\leq4.$ Then, for suitable sequences $\{t_n\}_{n\in\mathbb N}\subset\mathbb R,\{x_n\}_{n\in\mathbb N}\subset\mathbb R^d,$ possibly after extractions of subsequences (still denoted with the subscript $n$), we can write, for every $n\in\mathbb N$
\begin{equation*}
\vec v_n(-t_n,x-x_n,y)=\vec\psi(x,y)+\vec W_n(x,y)
\end{equation*}
where $\vec v_{n}(t,x,y)=e^{tH}\vec v_n^0$ and where the components of $\vec\psi$ are denoted by $(\psi,\partial\psi).$ Moreover, the following properties hold:
\begin{equation*}
\vec W_n\overset{n\to\infty}{\rightharpoonup}0 \quad \textit{in} \quad \mathcal H,
\end{equation*}
\begin{equation}\label{eq:small}
\limsup_{n\ra\infty}\|v_n(t,x,y)\|_{L^{\infty}L^q}\lesssim\|\psi\|_{L^2}^{e}\quad\text{for\,\,any}\quad q\in(2,2^*),
\end{equation}
where $e>0$ is given in \autoref{preli} and as $n\to\infty$ 
\begin{equation}\label{energy-ortho}
\|\vec v_n^0\|^2_{\mathcal H}=\|\vec\psi\|^2_{\mathcal H}+\|\vec W_n\|^2_{\mathcal H}+o(1).
\end{equation}
\noindent Similarly, for the $L^{\a+2}$ norm, as $n\to\infty,$ we have 
\begin{equation}\label{Lp:ortho}
\| v_n^0\|^{\a+2}_{L^{\a+2}}=\|\psi\|^{\a+2}_{L^{\a+2}}+\| W_n\|^{\a+2}_{L^{\a+2}}+o(1).
\end{equation}
Furthermore, the translation sequences $\{t_n\}_{n\in\mathbb N}$ and $\{x_n\}_{n\in\mathbb N}$ satisfy the dichotomies below:
\begin{equation}\label{dicoth}
\begin{aligned}
\textit{either}\quad t_n&=0, \qquad\textit{or}\quad t_n\rightarrow\pm\infty;\\
\textit{either}\quad x_n&=0, \qquad\textit{or}\quad |x_n|\rightarrow\infty.
\end{aligned}
\end{equation}
\end{prop}

\begin{proof}
Define $\vec v_n(t,x,y):=e^{tH}\vec v_n^0,$ namely $\vec v_n(t)$ is the linear evolution of $\vec v_n^0$ by the linear Klein-Gordon flow. Since the energy is preserved along the flow, the sequence $\vec v_n(t)$ is bounded in $L^\infty_t\mathcal H$ and by Sobolev embedding the sequence $\{v_n(t)\}_{n\in\mathbb N}$ is bounded in  $L^\infty L^q$ norm, for any $q\in(2,2^*)$. Thus, let us now choose a sequence of times $\{t_n\}_{n\in\mathbb N}$ such that 
\begin{equation}\label{eq:3.20}
\|v_n(-t_n,x,y)\|_{L^q}>\frac12\|v_n(\cdot,x,y)\|_{L^\infty L^q}.
\end{equation}
In the spirit of previous lemma, we consider $\Lambda\left(v_n(-t_n,x,y)\right)$ and $\lambda\left(v_n(-t_n,x,y)\right).$ Let $\{(x_n,y_n)\}_{n\in\mathbb N}$ be a sequence in $\mathbb R^d\times \mathbb T$ and $\vec\psi(x,y)=(\psi,\partial\psi)(x,y)\in\mathcal H$ be  such that, up to subsequences, 
\begin{equation*}
\vec v_n(-t_n,x-x_n,y-y_n)\rightharpoonup\vec\psi 
\end{equation*}
in $\mathcal H$ as $n\to\infty$. Then we get 
\begin{equation}\label{eq:3.21}
\vec v_n(-t_n,x-x_n,y-y_n)=\vec\psi+\vec W_n,\qquad \vec W_n\rightharpoonup0,
\end{equation}
the latter weak convergence occurring in $\mathcal H$ and in addition
\begin{equation}\label{eq:3.22}
\lambda\left(v_n(-t_n,x,y)\right)\lesssim\|\psi\|_{L^2}.
\end{equation}
The relation \eqref{eq:3.22} along with \autoref{preli} implies that 
\begin{equation*}
\limsup_{n\to\infty}\|v_n(-t_n)\|_{L^q}\lesssim\|\psi\|_{L^2}^e\quad \text{for\,\, any}\quad q\in(2,2^*),
\end{equation*}
and then \eqref{eq:small} follows by \eqref{eq:3.20}.\\ 

\noindent By definition, from \eqref{eq:3.21} we can write
\begin{equation}\label{eq:3.23}
\begin{aligned}
\vec {v}_n^0(x,y)=e^{t_nH}\vec\psi(x+x_n,y+y_n)+e^{t_nH}\vec W_n(x+x_n,y+y_n),
\end{aligned}
\end{equation}
and since $e^{tH}$ is an isometry on $\mathcal H$ and its adjoint is given by $e^{-tH}$, together with the fact that $\vec W_n\rightharpoonup0,$ we get, as $n\rightarrow\infty,$  
\begin{equation*}
\|\vec v_n^0\|_{\mathcal H}^2=\|\vec\psi\|_{\mathcal H}^2+\|\vec W_n\|_{\mathcal H}^2+o(1).
\end{equation*}

We pursue the proof by showing the orthogonality property of the potential energy, by distinguishing three cases. In the following, the Lebesgue exponent $\a+2,$ is given by the same $\a$  appearing in the nonlinearity of \eqref{NLKG}.
\\

\noindent\emph{Case 1: $|t_n|\ra\infty.$} From \eqref{eq:3.23} we see that \eqref{Lp:ortho} holds, observing that $W_n$ is uniformly bounded and using the dispersive estimate \eqref{disp:m3} and a density argument; hence the orthogonality in $L^{\a+2}$.\\

\noindent Since $\{y_n\}_{n\in\mathbb N}\subset \mathbb T$ which is compact, in the next two cases we can assume that up to subsequence $y_n\to\bar y\in \mathbb T.$\\

\noindent\emph{Case 2: $t_n\ra\bar t$ \& $x_n\ra\bar x.$} We claim the following: 
\begin{equation*}
\vec {v}_n^0(x,y)-e^{t_nH}\vec\psi(x+x_n,y+y_n)=e^{t_nH}\vec W_n(x+x_n,y+y_n)\ra 0,
\end{equation*}
for almost every $(x,y)\in\R^d\times\mathbb T.$
In fact $$(e^{t_nH} \vec W_n(x+x_n,y+y_n),\vec\psi)_{\mathcal H}=(\vec W_n,e^{-\bar{t}H}\vec \psi(x-\bar x,y-\bar y))_{\mathcal H}+o(1)=o(1),$$ if we localize in the euclidean part, i.e. if we consider the restriction of $e^{t_nH}\vec W_n(x+x_n,y+y_n)$ on a compact set $K\subset\R^d$. The compactness of $K\times \mathbb T$ gives by the Rellich-Kondrakhov theorem, that  $W_n(t_n,x+x_n,y+y_n)$ strongly converges towards zero in $L^p(K\times\mathbb T)$ for any $p\in(2,2^*),$ see \cite{Heb1}. Therefore we have $(x,y)$-almost everywhere convergence towards zero of $W_n(t_n,x+x_n,y+y_n).$ We recall that the Brezis-Lieb Lemma (see \cite{BL}) holds on a general measured space, therefore the same argument given in \cite{BV} yields to the $L^{\a+2}$ orthogonality in the case $t_n\ra\bar t$ and $x_n\ra \bar x.$ \\

\noindent\emph{Case 3: $t_n\ra\bar t$ \& $|x_n|\ra\infty.$} Similar arguments apply to the remaining situation $t_n\ra\bar t$ and $|x_n|\ra\infty.$
\\

It remains to prove that we can rearrange the sequences of translation parameters $\{t_n\}_{n\in\mathbb N},$ $\{x_n\}_{n\in\mathbb N}$ and $\{y_n\}_{n\in\mathbb N}$. Namely, we wish to have that for any $n\in\mathbb N,$ $t_n=0$ or $t_n\ra\pm\infty$, and similarly for  $\{x_n\}_{n\in\mathbb N}$, while $y_n$ can be assumed to be trivial. In the following, by $t_n\ra\bar t$ and  $x_n\ra\bar x$ we will implicitly assume that this possibly holds after extraction of subsequences from bounded sequences.   
\\

\noindent\emph{Case 1: $t_n\ra\bar t$ \& $|x_n|\ra\infty.$} By continuity of the linear flow 
\begin{equation*}
e^{t_nH}\vec\psi\overset{\mathcal H}{\longrightarrow}\vec\phi, \qquad \vec\phi(x,y):=e^{\bar{t}H}\vec\psi(x,y).
\end{equation*}
We rewrite $\vec v_n^0$ as 
\begin{equation*}
\vec v_n^0(x-x_n,y-y_n)=\vec\phi(x,y)+e^{t_nH}\vec W_n(x,y)+\vec r_n(x,y)=\vec\phi(x,y)+\vec\rho_n(x,y),
\end{equation*}
where $\vec r_n\rightarrow0$ strongly in $\mathcal H$ and $\vec\rho_n=e^{t_nH}\vec W_n(x,y)+\vec r_n(x,y).$
From \autoref{lem:4.6} it follows that if $\vec h_n\rightharpoonup0$ in $\mathcal H$ and $t_n\rightarrow\bar t$ then $e^{t_nH}\vec h_n\rightharpoonup0.$ Therefore $\vec\rho_n\rightharpoonup0$ in $\mathcal H$. Its is true whether $x_n\rightarrow x_0\in\R^d$ or $|x_n|\rightarrow\infty.$ Translating the profiles by $\bar y,$ namely by choosing  $\vec\phi(x,y):=e^{\bar{t}H}\vec\psi(x,y-\bar y)$ we can also assume that $y_n=0.$
\noindent\emph{Case 2: $t_n\ra\bar t$ \& $x_n\ra\bar x.$} If $t_n\rightarrow\bar t\in\R$ and also $x_n\rightarrow\bar x\in\R^d$ we proceed similarly by adding a space translation: namely as before but considering $\vec\phi:=e^{\bar tH}\vec\psi(x-\bar x,y-\bar y).$\\ 

\noindent\emph{Case 3: $t_n\ra\pm\infty$ \& $x_n\ra\bar x.$} If $t_n\ra\pm\infty$ and $x_n\rightarrow\bar x\in\R$ then we change the function by translating in the space variables only, i.e. we consider $\vec\phi:=\vec\psi(x-\bar x,y-\bar y).$\\

\noindent\emph{Case 4: $|t_n|\ra\infty$ \& $|x_n|\ra\infty.$} By extracting subsequences we have the desired property, again by translating the profiles in the $y$-variable only.
\end{proof}

We can now state the linear profile decomposition for a bounded sequence of linear solutions in the energy space.  
\begin{theorem}[Linear Profile Decomposition] \label{lpd}Let $\{\vec u_n(t,x,y)\}_{n\in\mathbb N}$ be a sequence of solutions to the linear Klein-Gordon equation, bounded in $H^1(\R^d\times\mathbb T)\times L^2(\R^d\times\mathbb T)$ for $1\leq d\leq4.$ Recall that $\| \vec u_n(t,x,y)\|_{\mathcal H}=\| \vec u_n(0,x,y)\|_{\mathcal H},$ thus we are assuming that $\sup_n\|\vec u_n(0)\|_{\mathcal H}<\infty.$ For any integer $k>1$ the decomposition below holds: 
\begin{equation*}
 \vec u_n(t,x,y)=\sum_{1\leq j<k} \vec v^j(t-t^j_n,x-x_n^j,y)+ \vec R^k_n(t,x,y),
\end{equation*}
where $\vec v^j$ are solutions to linear Klein-Gordon with suitable initial data and the translation sequences satisfy 
\begin{equation*}
\lim_{n\to\infty}\left(|t_n^k-t_n^j|+|x_n^k-x_n^j|\right)=\infty, \qquad \forall\,j\neq k.
\end{equation*}
along with the same dichotomy property of \eqref{dicoth}.
Moreover, for $q\in(2,2^*)$
\begin{equation*}
\lim_{k\ra\infty}\limsup_{n\ra\infty}\|R_n^k\|_{L^\infty L^q}=0
\end{equation*}
which in turn implies that 
\begin{equation*}
\lim_{k\ra\infty}\limsup_{n\ra\infty}\|R_n^k\|_{L^{\a+1}L^{2(\a+1)}}=0.
\end{equation*} 
Furthermore as $n\rightarrow\infty,$
\begin{equation*}
\| \vec u_n(0,x,y)\|^2_{\mathcal H}=\sum_{1\leq j<k}\| \vec v^j_n\|^2_\mathcal H+\| \vec R^k_n\|^2_{\mathcal H}+o(1),
\end{equation*}
and
\begin{equation*}
\| u_n(0,x,y)\|^{\a+2}_{L^{\a+2}}=\sum_{1\leq j<k}\| v^j_n\|^{\a+2}_{L^{\a+2}}+\| R^k_n\|^{\a+2}_{L^{\a+2}}+o(1).
\end{equation*}
\end{theorem}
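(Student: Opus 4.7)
\textbf{The plan} is to iterate Proposition \ref{preliminary-prop} and build up the decomposition by induction on $k$. Setting $\vec R_n^1:=\vec u_n(0,\cdot,\cdot)$, I apply Proposition \ref{preliminary-prop} to $\{\vec R_n^1\}$ to extract a first profile $\vec\psi^1\in\mathcal H$ together with translation parameters $(t_n^1,x_n^1)$ obeying the dichotomy \eqref{dicoth}; with $\vec v^1(t,x,y):=e^{tH}\vec\psi^1$, the residual
\[
\vec R_n^2(t,x,y):=\vec u_n(t,x,y)-\vec v^1(t-t_n^1,x-x_n^1,y)
\]
is itself a sequence of free Klein--Gordon solutions whose initial data, after translation by $(-t_n^1,-x_n^1)$, converge weakly to zero in $\mathcal H$. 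At stage $k$ I apply Proposition \ref{preliminary-prop} to $\{\vec R_n^k(0,\cdot,\cdot)\}$, obtain $\vec\psi^k$ and $(t_n^k,x_n^k)$, and pass to
\[
\vec R_n^{k+1}(t,x,y):=\vec R_n^k(t,x,y)-\vec v^k(t-t_n^k,x-x_n^k,y)=\vec u_n(t,x,y)-\sum_{1\leq j\leq k}\vec v^j(t-t_n^j,x-x_n^j,y),
\]
which provides the required representation for every integer $k>1$.

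\textbf{Orthogonality of norms and of parameters.} At each stage Proposition \ref{preliminary-prop} supplies the Pythagorean-type identity
\[
\|\vec R_n^j(0)\|_{\mathcal H}^2=\|\vec\psi^j\|_{\mathcal H}^2+\|\vec R_n^{j+1}(0)\|_{\mathcal H}^2+o(1),
\]
and the analogous identity with $\|\cdot\|_{\mathcal H}^2$ replaced by $\|\cdot\|_{L^{\alpha+2}}^{\alpha+2}$; telescoping from $j=1$ to $j=k-1$ gives the two advertised splittings. In particular $\sum_{j\geq 1}\|\vec\psi^j\|_{\mathcal H}^2\leq\sup_n\|\vec u_n(0)\|_{\mathcal H}^2<\infty$, so $\|\vec\psi^j\|_{\mathcal H}\to 0$ as $j\to\infty$. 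For the parameter orthogonality $|t_n^k-t_n^j|+|x_n^k-x_n^j|\to\infty$ whenever $j\neq k$, I proceed by induction on $k$: assuming the property up to stage $k$ and supposing $(t_n^{k+1}-t_n^{j_0},x_n^{k+1}-x_n^{j_0})$ remained bounded along a subsequence for some $j_0\leq k$, I write
\[
\vec R_n^{k+1}=\vec R_n^{j_0+1}-\sum_{j_0<l\leq k}\vec v^l(\cdot-t_n^l,\cdot-x_n^l,\cdot)
\]
and translate by $(-t_n^{k+1},-x_n^{k+1})$; each subtracted profile vanishes weakly because its relative shift $(t_n^{k+1}-t_n^l,x_n^{k+1}-x_n^l)=(t_n^{k+1}-t_n^{j_0})-(t_n^l-t_n^{j_0})$ diverges (bounded minus divergent by the inductive hypothesis, for $l\neq j_0$), and the translated $\vec R_n^{j_0+1}$ also vanishes weakly by \autoref{lem:4.6} applied to the weakly null residual produced at stage $j_0+1$ and a further shift diverging by the same bounded-minus-divergent argument. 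This would force $\vec\psi^{k+1}=0$, contrary to the nontriviality of the extraction.

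\textbf{Smallness of the remainder.} Estimate \eqref{eq:small} applied at stage $k$ gives
\[
\limsup_{n\to\infty}\|R_n^k\|_{L^\infty L^q}\lesssim\|\psi^k\|_{L^2}^{e}\leq\|\vec\psi^k\|_{\mathcal H}^{e}\qquad\forall\,q\in(2,2^*),
\]
whose right-hand side tends to $0$ as $k\to\infty$ by the summability of $\|\vec\psi^j\|_{\mathcal H}^2$. Since each $\vec R_n^k$ is a free Klein--Gordon solution with energy bounded uniformly in $n,k$ by $\sup_n\|\vec u_n(0)\|_{\mathcal H}^2$, \autoref{thm:strichartz} supplies a uniform bound on $\|R_n^k\|_{L^{q_0}L^{r_0}}$ for some admissible pair $(q_0,r_0)$; interpolating pointwise in $(x,y)$ between this Strichartz bound and the small $L^\infty L^q$ norm (for $q\in(2,2^*)$ chosen appropriately) then promotes the smallness to the $L^{\alpha+1}L^{2(\alpha+1)}$ scale, yielding the last assertion of the theorem. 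The \emph{main technical obstacle} is the inductive parameter-orthogonality step: tracking relative shifts between \emph{all} pairs of previously chosen centers simultaneously and propagating the dichotomy \eqref{dicoth} through arbitrarily many extractions requires a careful chained application of \autoref{lem:4.6}.
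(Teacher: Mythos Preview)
Your proposal is correct and follows essentially the same route as the paper: iterate Proposition~\ref{preliminary-prop}, telescope the Pythagorean identities, deduce $\|\vec\psi^j\|_{\mathcal H}\to 0$ and hence $\|R_n^k\|_{L^\infty L^q}\to 0$ via \eqref{eq:small}, then interpolate against a Strichartz bound (the paper isolates this last step as \autoref{app:lemma1}), while the parameter orthogonality comes from \autoref{lem:4.6} applied to the weakly null residuals. One small slip in your write-up: in the inductive orthogonality step, the further shift you apply to $\vec R_n^{j_0+1}$ is \emph{bounded} (it is $(t_n^{k+1}-t_n^{j_0},x_n^{k+1}-x_n^{j_0})$, bounded by hypothesis), not diverging, and it is precisely this boundedness combined with the second bullet of \autoref{lem:4.6} (contrapositive) that forces the weak limit to vanish.
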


\begin{proof}
We iterate several times the result of \autoref{preliminary-prop}. We consider $\{\vec v_n\}_{n\in\mathbb N}$ as the sequence of initial data of the linear solution $\{\vec u_n(t,x,y)\}_{n\in\mathbb N}$; namely we consider the sequence $\{\vec u_n(0,x,y)\}_{n\in\mathbb N}$ as a bounded sequence in $\mathcal H$.
Let $\{t^1_n\}_{n\in\mathbb N}$ be the sequence given in the proposition above and $\{x^1_n\}_{n\in\mathbb N}\subset\mathbb R^d$ be such that, up to subsequences,
\begin{equation*}
\vec u_n(-t^1_n,x-x^1_n,y)\rightharpoonup \vec{\psi}^1(x,y) 
\end{equation*}
in $\mathcal H$. Then
\begin{equation*}
\vec u_n(-t^1_n,x-x^1_n,y)= \vec\psi^1(x,y)+ \vec W^1_n(x,y),
\end{equation*}
with $\vec W^1_n\rightharpoonup0$ in $\mathcal H$. 
It follows, as $n\to\infty,$ that 
\begin{equation*}
\vec u_n(0,x,y)=e^{t^1_nH}\vec\psi^1(x+x^1_n,y)+e^{t^1_nH}\vec W^1_n(x+x^1_n,y):=e^{t^1_nH}\vec\psi^1(x+x^1_n,y)+ \vec R_n^1(x,y),
\end{equation*}
where 
\begin{equation}\label{eq:wc1}
e^{-t^1_nH} \vec R^1_n(x-x^1_n,y)= \vec W^1_n(x,y)\rightharpoonup0,
\end{equation}
in $\mathcal H,$ and that 
\begin{equation*}
\|\vec u_n(0)\|^2_{\mathcal H}=\|\vec\psi^1\|^2_{\mathcal H}+\|\vec R^1_n\|^2_{\mathcal H}+o(1)=\|\vec\psi^1\|^2_{\mathcal H}+\|\vec W^1_n\|^2_{\mathcal H}+o(1).
\end{equation*}
Similar claim can be proved for the $L^{\a+2}-$norm (potential energy).
We now consider the functions $\vec R^1_n(x,y)=e^{t^1_nH}\vec W^1_n(x+x^1_n,y)$ as bounded sequence in $\mathcal H$. As before, we can write 
\begin{equation*}
\vec R^1_n(x,y)=e^{t^2_nH} \vec\psi^2(x+x^2_n,y)+e^{t^2_nH}\vec W^2_n(x+x^2_n,y):=e^{t^2_nH}\vec\psi^2(x+x^2_n,y)+\vec R^2_n(x,y),
\end{equation*}
where $\vec W^2_n\rightharpoonup 0$ in $\mathcal H$ and 
\begin{equation*}
\|\vec R^1_n\|_{\mathcal H}^2=\|\vec\psi^2\|^2_{\mathcal H}+\|\vec R^2_n\|^2_{\mathcal H}+o(1)=\|\vec\psi^2\|^2_{\mathcal H}+\|\vec W^2_n\|^2_{\mathcal H}+o(1).
\end{equation*}
It implies that at the second step we have
\begin{equation*}
\vec u_n(0,x,y)=e^{t^1_nH}\vec\psi^1(x+x^1_n,y)+e^{t^2_nH}\vec\psi^2(x+x^2_n,y)+\vec R^2_n(x,y), 
\end{equation*}
and by ``applying'' the linear propagator on both sides we get 
\begin{equation*}
\vec u_n(t,x,y)=e^{(t+t^1_n)H}\vec\psi^1(x+x^1_n,y)+e^{(t+t^2_n)H} \vec\psi^2(x+x^2_n,y)+e^{tH}\vec R^2_n(x,y).
\end{equation*}
Moreover, as $n\to\infty,$
\begin{equation*}
\|\vec u(t,x,y)_n\|_{\mathcal H}^2=\|\vec\psi^1\|^2_{\mathcal H}+\|\vec\psi^2\|^2_{\mathcal H}+\|\vec R^2_n\|^2_{\mathcal H}+o(1)=\|\vec\psi^1\|^2_{\mathcal H}+\|\vec\psi^2\|^2_{\mathcal H}+\|\vec W^2_n\|^2_{\mathcal H}+o(1),
\end{equation*}
and the orthogonality for the $L^{\a+2}-$norm can be similarly proved.
Recall that 
\begin{equation*}
\begin{aligned}
e^{t^1_nH}\vec W^1_n(x+x^1_n,y)=\vec R^1_n(x,y)
=e^{t^2_nH} \vec\psi^2(x+x^2_n,y)+e^{t^2_nH} \vec W^2_n(x+x^2_n,y),
\end{aligned}
\end{equation*}
and so
\begin{equation*} 
\begin{aligned}
e^{(t^1_n-t^2_n)H}\vec W^1_n(x+(x^1_n-x^2_n),y)&=\vec\psi^2(x,y)+\vec W^2_n(x,y),
\end{aligned}
\end{equation*}
with $ \vec W^2_n\rightharpoonup 0$ in $\mathcal H,$ and this implies the weak convergence in $\mathcal H$
\begin{equation*}
e^{(t^1_n-t^2_n)H}\vec W^1_n(x+(x^1_n-x^2_n),y)\rightharpoonup\vec\psi^2(x,y).
\end{equation*}
\autoref{lem:4.6}, which is the equivalent of \cite{BV}*{Lemma 2.1} in our context, allows us to conclude with the orthogonality condition
\begin{equation*}
|t^1_n-t^2_n|+|x^1_n-x_n^2|\rightarrow\infty.
\end{equation*}

\noindent Iterating this construction we end up, at the $k^{th}$ step, with
\begin{equation*}
\begin{aligned}
\vec u_n(t,x,y)&=e^{(t+t^1_n)H} \vec\psi^1(x+x^1_n,y)+\dots+e^{(t+t^{k-1}_n)H}\vec\psi^{k-1}(x+x^{k-1}_n,y)+e^{tH}\vec R^k_n(x,y), 
\end{aligned}
\end{equation*}
where 
\begin{equation*}
\vec R^k_n(x,y)=e^{t^k_nH}\vec W^k_n(x+x^k_n,y),\qquad W^k_n\rightharpoonup0.
\end{equation*}

\noindent Moreover the free energy orthogonality holds:
\begin{equation*}
\|\vec u_n(t,x,y)\|_{\mathcal H}^2=\|\vec\psi^1\|_{\mathcal H}^2+\dots+\|\vec\psi^{k-1}\|_{\mathcal H}^2+\|\vec R^k_n\|_{\mathcal H}^2,
\end{equation*}
and by the fact that the l.h.s. is uniformly bounded in $L^\infty_t \mathcal H$ we get
\begin{equation*}
\lim_{k\to\infty}\|\psi^k\|_{L^2}\leq\lim_{k\to\infty}\|\psi^k\|_{H^1}\leq\lim_{k\rightarrow\infty}\|\vec\psi^k\|_{\mathcal H}=0.
\end{equation*}
Using \eqref{eq:3.22} we obtain the smallness of the remainders in the sense of 
\begin{equation*}
\limsup_{k\rightarrow\infty}\limsup_{n\rightarrow\infty}\|R^k_n\|_{L^\infty L^q}=0.
\end{equation*}
The proof of the smallness in the Strichartz norm
\begin{equation*}
\lim_{k\ra\infty}\limsup_{n\ra\infty}\|R_n^k\|_{L^{\a+1}L^{2(\a+1)}}=0,
\end{equation*}
is done by interpolation (see \autoref{app:lemma1} for all detailed computations). The proof of \autoref{lpd} is complete.
\end{proof}

\section{The critical element}\label{sec:critical element}

\autoref{lpd} is the key tool for the construction of a minimal (with respect to its energy) non-scattering solution to \eqref{NLKG} with some compactness property. We define the following critical energy:

\begin{equation*}
\begin{aligned}
E_c=\sup\{E>0\,|\, &(f,g)\in\mathcal H\quad\textit{and}\quad E(f,g)<E\\
&\implies u_{(f,g)}(t)\in L^{\a+1}L^{2(\a+1)}<\infty\}
\end{aligned}
\end{equation*}
\noindent where $u_{(f,g)}(t)$ denotes here the global solution to \eqref{NLKG} with Cauchy data $f$ and $g.$ Our final aim, at the end of the paper, is to exclude that $E_c$ is finite. 

The result stated in \autoref{thm:small} ensures that $E_c>0.$ The strategy consists in a contradiction argument. If we suppose that $E_c$ is finite, we will show that there exists a critical solution $u_c$ to \eqref{NLKG}, with energy $E_c$, such that it does not belong to the Strichartz space $L^{\a+1}L^{2(\a+1)}$. It will moreover enjoy some compactness properties. The latter will imply that such critical solution must be the trivial one, hence a contradiction. 

We first proceed with the construction of the critical solution, based on the profile decomposition theorem \autoref{lpd}.
We remark that it is a linear statement. Since we are dealing with a nonlinear equation, we give the following perturbation lemma which will enable us to absorb the nonlinear terms in the remainders of the profile decomposition theorem, in a proper way. 
The following long time perturbation theorem is contained in \cite{NS11} where it is proved for cubic focusing NLKG on $\mathbb R^3$, namely for $\a=2,$ with the opposite sign in front of the nonlinearity and in an euclidean framework. We report here the statement modified to fit our setting, for the sake of completeness.
\begin{lemma}\label{pert-lemma}
For any $M>0$ there exist $\varepsilon=\varepsilon(M)>0$ (possibly very small) and $c=c(M)>0$ (possibly very large) such that the following fact holds. Fix $t_0\in \R$ and suppose that 
\begin{equation*}
\begin{aligned}
&\|v\|_{L^{\a+1}L^{2(\a+1)}}\leq M\\
\|e_u\|_{L^{1}L^{2}}+\|e_v\|_{L^{1}L^{2}}&+\|w_0\|_{L^{\a+1}L^{2(\a+1)}}\leq\varepsilon^\prime\leq\varepsilon(M)
\end{aligned}
\end{equation*}
where $u,v\in\bigcap_{h=0}^1\mathcal C^{h}(\R;H^{1-h}),$ $e_z=\partial_{tt}z-\Delta_{x,y}z+z+|z|^\a z$ and $\vec w_0(t)=e^{(t-t_0)H}(\vec u(t_0)-\vec v(t_0)).$ Then 
\begin{equation*}
\begin{aligned}
\|u\|_{L^{\a+1}L^{2(\a+1)}}&<\i,\\
\|\vec u-\vec v-\vec w_0\|_{L^{\i}\mathcal H}+\|u-v\|_{L^{\a+1}L^{2(\a+1)}}&\leq c(M)\eps^\prime.
\end{aligned}
\end{equation*}
\end{lemma}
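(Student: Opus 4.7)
The natural variable is the error $w := u - v - w_0$. Since $w_0$ is a free Klein--Gordon solution with $\vec w_0(t_0)=\vec u(t_0)-\vec v(t_0)$, we have $\vec w(t_0)=0$, and a direct computation shows
\[
\partial_{tt}w-\Delta_{x,y}w+w=(e_u-e_v)-\bigl(|u|^\alpha u-|v|^\alpha v\bigr).
\]
The plan is to produce an a priori $L^{\alpha+1}L^{2(\alpha+1)}$ bound on $w$, which then upgrades to an $L^\infty\mathcal H$ bound via the Strichartz estimate of \autoref{thm:strichartz}; note that $(\alpha+1,2(\alpha+1))$ is an admissible pair in the range exhibited in \autoref{thm:small}.

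\textbf{Subdividing the time axis.} The smallness of $\varepsilon'$ is useless on a single large time interval, since the difference $|u|^\alpha u-|v|^\alpha v$ cannot be absorbed against $v$ when only $\|v\|_{L^{\alpha+1}L^{2(\alpha+1)}}\leq M$ is known. I would partition $\R$ into $N=N(M,\eta)$ consecutive intervals $I_j=[t_j,t_{j+1}]$ with
\[
\|v\|_{L^{\alpha+1}(I_j;L^{2(\alpha+1)})}\leq\eta,
\]
for a threshold $\eta=\eta(M)$ fixed below; by the hypothesis on $v$, one can take $N\simeq(M/\eta)^{\alpha+1}$.

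\textbf{Local bootstrap on each slab.} On each $I_j$, the inhomogeneous Strichartz estimate applied to $w$ with data at $t_j$ produces
\[
X_j+\|\vec w\|_{L^\infty(I_j;\mathcal H)}\leq C\|\vec w(t_j)\|_{\mathcal H}+C\|e_u-e_v\|_{L^1(I_j;L^2)}+C\bigl\||u|^\alpha u-|v|^\alpha v\bigr\|_{L^1(I_j;L^2)},
\]
where $X_j:=\|w\|_{L^{\alpha+1}(I_j;L^{2(\alpha+1)})}$. The pointwise bound $\bigl||u|^\alpha u-|v|^\alpha v\bigr|\lesssim(|u|^\alpha+|v|^\alpha)|u-v|$, together with H\"older in space ($\tfrac12=\tfrac{\alpha}{2(\alpha+1)}+\tfrac1{2(\alpha+1)}$) and in time (exponents $\tfrac{\alpha+1}{\alpha}$ and $\alpha+1$), combined with $u-v=w+w_0$, yields the self-consistent inequality
\[
X_j\leq C\|\vec w(t_j)\|_{\mathcal H}+C\varepsilon'+C\bigl(\eta^\alpha+(\varepsilon')^\alpha+X_j^\alpha\bigr)\bigl(X_j+\varepsilon'\bigr).
\]
Choosing $\eta=\eta(M)$ so that $C\eta^\alpha<1/4$, and $\varepsilon(M)$ correspondingly small, a continuity argument (the map $t\mapsto\|w\|_{L^{\alpha+1}([t_j,t];L^{2(\alpha+1)})}$ is continuous and vanishes at $t_j$) gives
\[
X_j+\|\vec w\|_{L^\infty(I_j;\mathcal H)}\leq 2C\bigl(\|\vec w(t_j)\|_{\mathcal H}+\varepsilon'\bigr).
\]

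\textbf{Iteration across the $N$ slabs and conclusion.} Evaluated at the right endpoint, this reads $\|\vec w(t_{j+1})\|_{\mathcal H}\leq(1+C\eta^\alpha)\|\vec w(t_j)\|_{\mathcal H}+2C\varepsilon'$. Starting from $\vec w(t_0)=0$ and iterating $N$ times gives
\[
\|\vec w(t_{j+1})\|_{\mathcal H}\leq 2C\varepsilon'\sum_{k=0}^{N-1}(1+C\eta^\alpha)^k\leq c(M)\,\varepsilon',
\]
with $c(M)$ depending only on $N(M,\eta(M))$ and $C$. Summing the local bounds $X_j$ across the partition (using the $\ell^{\alpha+1}$-structure of the Strichartz norm in time) produces the global estimate $\|u-v-w_0\|_{L^{\alpha+1}L^{2(\alpha+1)}}\leq c(M)\varepsilon'$, and finiteness of $\|u\|_{L^{\alpha+1}L^{2(\alpha+1)}}$ then follows by the triangle inequality together with $\|v\|_{L^{\alpha+1}L^{2(\alpha+1)}}\leq M$ and $\|w_0\|_{L^{\alpha+1}L^{2(\alpha+1)}}\leq\varepsilon'$.

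\textbf{Main obstacle.} The delicate point lies in \emph{Step~3}: the continuity/bootstrap argument requires choosing $\eta$ before any bound on $w$ is available, while the number $N(M,\eta)$ of slabs depends on both $M$ and $\eta$. Consequently $\varepsilon(M)$ must shrink roughly like a negative power of an exponential in $M$ in order to counter the multiplicative constant $(1+C\eta^\alpha)^N$ produced by the iteration. Once this delicate dependence is accepted, the quoted dependence of $c(M)$ on $M$ is unavoidable. The rest of the argument is purely a careful but routine use of Strichartz with the nonlinear H\"older bookkeeping described above.
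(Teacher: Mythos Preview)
Your proof is correct and follows exactly the standard Nakanishi--Schlag argument that the paper invokes: the paper's own proof is only a one-line sketch pointing to \cite{NS11} together with the pointwise bound $\bigl||u+v|^\alpha(u+v)-|u|^\alpha u\bigr|\leq C(|u|^\alpha|v|+|v|^{\alpha+1})$, and what you have written out---the time-slab decomposition with small $\|v\|_{L^{\alpha+1}(I_j;L^{2(\alpha+1)})}$, the local bootstrap via Strichartz and H\"older, and the iteration across $N(M)$ slabs---is precisely that argument. Your nonlinear estimate $\bigl||u|^\alpha u-|v|^\alpha v\bigr|\lesssim(|u|^\alpha+|v|^\alpha)|u-v|$ is equivalent to theirs after relabeling, so there is no substantive difference.
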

\textit{Sketch of the proof: }
This follows by the same proof as in \cite{NS11} (where the nonlinearity was $u^2u$), but with the following inequality to estimate the nonlinear part: 
\begin{equation*}
||u+v|^\a(u+v)-|u|^\a u|\leq C(|u|^\a+|v|^\a)|v|=C(|u|^\a|v|+|v|^{\a+1}).
\end{equation*}

Once every ingredient is given, we continue with the extraction of the critical solution. We therefore assume that $E_c<\infty.$ Let $(f_n,g_n)\in\mathcal H$ be a sequence of Cauchy data such that $E(f_n,g_n)\ra E_c$ as $n\ra+\infty$ and let $u_n(t):=u_{(f_n,g_n)}(t)$ be the corresponding solutions to \eqref{NLKG} which exist globally in time but do not belong to $L^{\a+1}L^{2(\a+1)},$ i.e. $\|u_n\|_{L^{\a+1}L^{2(\a+1)}}=\infty.$ The last condition means that we are considering a maximizing sequence $(f_n,g_n)\in\mathcal H$ whose corresponding solutions do not satisfy the scattering property. 

Since $E(f_n,g_n)\ra E_c$ and the energy is a conserved quantity, we can state that $\vec u^0_n:=(f_n,g_n)$ is uniformly bounded in $\mathcal H$. And since the Klein-Gordon linear flow preserves the $\mathcal H$ norm, the sequence $e^{tH}\vec u_n^0$ is uniformly bounded in $L^\infty\mathcal H.$ Thus we can apply the linear profile decomposition to this sequence of free solutions and we can write
\begin{equation}\label{eq:4.51}
e^{tH}\vec u_n^0=\sum_{1\leq j<k} \vec v^j_n(t)+ \vec R^k_n(t,x,y),
\end{equation}
where $\vec v^j_n(t)=\vec v^j(t-t^j_n,x-x_n^j,y)=e^{(t-t^j_n)H}\vec\psi^j(x-x^j_n,y)$ for suitable $\vec \psi^j\in\mathcal H.$ 
We recall that the profile decomposition theorem given above ensures the orthogonality of the translation sequences in the sense of  
\begin{equation}\label{eq:6.3}
\lim_{n\ra+\infty}\left(|t_n^h-t_n^j|+|x_n^h-x_n^j|\right)=+\infty,
\end{equation}
\noindent for all $j\neq h$, the smallness of the remainders in the sense of 
\begin{equation}\label{eq:4.61}
\lim_{k\ra\infty}\limsup_{n\ra\infty}\|R_n^k(t)\|_{L^\infty L^q\cap L^{\a+1}L^{2(\a+1)}}=0,
\end{equation}
as well as the pythagorean expansions of the quadratic and super quadratic terms of the energy. More precisely, for $n\rightarrow\infty,$  
\begin{equation}\label{eq:6.5}
\|(f_n,g_n)\|_{\mathcal H}^2=\| \vec u_n(0,x,y)\|^2_{\mathcal H}=\| \vec u_n(t,x,y)\|^2_{\mathcal H}=\sum_{1\leq j<k}\| \vec v^j_n\|^2_\mathcal H+\| \vec R^k_n\|^2_{\mathcal H}+o(1),
\end{equation}
and 
\begin{equation}\label{eq:6.51}
\| u_n(0,x,y)\|^{\a+2}_{L^{\a+2}}=\sum_{1\leq j<k}\| v^j_n\|^{\a+2}_{L^{\a+2}}+\| R^k_n\|^{\a+2}_{L^{\a+2}}+o(1).
\end{equation}
We suppose that $k>1$ and we follow the same strategy as in \cite{BV, NS11}. We have that, at most, in one case we can have that both space and time translations sequences are trivial, due to \eqref{eq:6.3}. Without loss of generality we can suppose that this case happens when $j=1$, and since we are assuming $k>1$ we have, by orthogonality of the energy expressed by summing up \eqref{eq:6.5} and \eqref{eq:6.51}, that $\vec\psi^1$ is 
such that the corresponding solution $z^1:=u_{\vec\psi^1}$ to \eqref{NLKG} scatters, as it belongs to $L^{\a+1}L^{2(\a+1)}$ by definition. In the other cases $j\geq2,$ we associate to a linear profile $\vec\psi^j,$  a nonlinear profile in a proper way. 
We associate a nonlinear profile $V^j$ to each linear profile $v^j$ thanks to the following procedure: $V^j$ is a nonlinear solution to \eqref{NLKG} such that \begin{equation*}
\lim_{n\to\infty}\|\vec v^j(t^j_n)-\vec V^j(t^j_n)\|_{\mathcal H}=0.
\end{equation*}
Recall that by the dichotomy property of the parameters, for every $j$, $\lim_{n\to\infty}t^j_n=0$ or $\lim_{n\to\infty}|t^j_n|=\infty.$ Then $V^j$ is locally defined both in a neighborhood of $t=0$ or $|t|=\infty:$ the first property follows by the local well-posedness theory, while the second one by the existence of the wave operators. Due to the defocusing nature of the equation, $V^j$ is actually globally defined. Orthogonality of the energy given by \eqref{eq:6.5} together with \eqref{eq:6.51} implies that any nonlinear profile $V^j$ has an energy less than the minimal one $E_c.$

\noindent Let us define $$V(t)=\sum_{j=1}^kV^j(t-t^j_n,x-x^j_n,y);$$ we use the perturbation lemma with $V$ instead of $v$ in \autoref{pert-lemma} and $u_n$ in the role of $u$ of the \autoref{pert-lemma}. As in \cite{NS11} this would imply that 
\begin{equation*}
\limsup_{n\to\infty}\|\sum_{j=1}^kV^j(t-t^j_n,x-x^j_n,y)\|_{L^{\a+1}L^{2(\a+1)}}<C<\infty,\quad\text{ uniformly in }\; k,
\end{equation*}
and \autoref{pert-lemma} gives
$$\limsup_{n\to\infty}\|u_n\|_{L^{\a+1}L^{2(\a+1)}}<C,$$
which is a contradiction. Therefore $k=1$, and the precompactness of the trajectory up to a translation also follows by \cite{NS11}. We can summarize the core result of this section in the following theorem. 

\begin{theorem}\label{exist:min}
There exists an initial datum $(f_c,g_c)\in H^1(\R^d\times \mathbb T)\times L^2(\R^d\times \mathbb T)$ such that the corresponding solution $u_c(t)$ to \eqref{NLKG} is global and $\|u_c\|_{L^{\a+1}L^{2(\a+1)}}=\infty.$ Moreover there exists a path $x(t)\in\R^d$ such that $\{u_c(t,x-x(t),y),\partial_t u_c(t,x-x(t),y), t\in\R^+\}$ is precompact in $H^1(\R^d\times \mathbb T)\times L^2(\R^d\times \mathbb T).$ 
\end{theorem}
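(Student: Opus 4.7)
The plan is to follow the contradiction scheme outlined just above the theorem, filling in the details left implicit. Assuming toward contradiction that $E_c<+\infty$, I would first pick a maximizing sequence $(f_n,g_n)\in\mathcal H$ with $E(f_n,g_n)\searrow E_c$ whose associated global solutions $u_n$ to \eqref{NLKG} satisfy $\|u_n\|_{L^{\alpha+1}L^{2(\alpha+1)}}=+\infty$; such a sequence exists by the very definition of $E_c$ together with \autoref{thm:small}. Energy conservation gives a uniform $\mathcal H$-bound on $\vec u_n^0$, so \autoref{lpd} applies to $e^{tH}\vec u_n^0$ and produces, for each truncation $k$, a linear decomposition as in \eqref{eq:4.51} with the orthogonality \eqref{eq:6.3}, the dichotomy \eqref{dicoth}, the remainder smallness \eqref{eq:4.61}, and the Pythagorean expansions \eqref{eq:6.5}--\eqref{eq:6.51}.

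I would then associate to every linear profile $\vec\psi^j$ a \emph{nonlinear} profile $V^j$, defined as the unique global solution of \eqref{NLKG} such that $\|\vec V^j(-t_n^j)-e^{-t_n^j H}\vec\psi^j\|_{\mathcal H}\to 0$. When $t_n^j\equiv 0$ this means $\vec V^j(0)=\vec\psi^j$ and $V^j$ is produced by local well-posedness plus energy conservation; when $|t_n^j|\to+\infty$ it is provided by the existence of wave operators, obtained by solving Duhamel's formula from $\pm\infty$ via a contraction built on the global Strichartz estimates of \autoref{thm:strichartz} applied to the small tail $\|e^{tH}\vec\psi^j\|_{L^{\alpha+1}([T,\infty);L^{2(\alpha+1)})}$, and then continued globally by energy conservation. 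Whenever the decomposition contains at least two non-trivial contributions (linear profiles or a non-vanishing remainder), the Pythagorean identities \eqref{eq:6.5}--\eqref{eq:6.51} force each $V^j$ to carry energy strictly less than $E_c$, so by definition of $E_c$ each $V^j$ satisfies a finite Strichartz bound $\|V^j\|_{L^{\alpha+1}L^{2(\alpha+1)}}\leq \mathcal B(E(V^j))$.

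The main technical step is to apply the perturbation \autoref{pert-lemma} with $v:=V=\sum_{1\leq j<k}V^j(\cdot-t_n^j,\cdot-x_n^j,\cdot)$ and $u=u_n$. This requires checking, as $n\to+\infty$ followed by $k\to+\infty$, three properties:
\begin{enumerate}[(i)]
\item a uniform Strichartz bound $\|V\|_{L^{\alpha+1}L^{2(\alpha+1)}}\leq M$, via a decoupling lemma showing that cross-terms in $\bigl\|\sum_j V_n^j\bigr\|_{L^{\alpha+1}L^{2(\alpha+1)}}^{\alpha+1}$ vanish thanks to \eqref{eq:6.3}, so the norm is comparable to $\sum_j\|V^j\|_{L^{\alpha+1}L^{2(\alpha+1)}}^{\alpha+1}$, which is summable because the energies are;
\item smallness of the nonlinear error $e_V=\partial_{tt}V-\Delta V+V+|V|^\alpha V$ in $L^1L^2$, which reduces to controlling $\bigl\||V|^\alpha V-\sum_j|V_n^j|^\alpha V_n^j\bigr\|_{L^1L^2}\to 0$ by the asymptotic pointwise orthogonality induced by \eqref{eq:6.3};
\item smallness of $\vec w_0=e^{tH}(\vec u_n(0)-\vec V(0))$ in $L^{\alpha+1}L^{2(\alpha+1)}$, which up to $o_n(1)$ equals $\vec R_n^k$ and so vanishes by \eqref{eq:4.61}.
\end{enumerate}
Once these are in place, \autoref{pert-lemma} yields $\sup_n\|u_n\|_{L^{\alpha+1}L^{2(\alpha+1)}}<+\infty$, contradicting the choice of $(f_n,g_n)$. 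Hence only one non-trivial nonlinear profile survives and $\|\vec R_n^1\|_{\mathcal H}\to 0$; this profile, which by the dichotomy \eqref{dicoth} we may assume has $t_n^1\equiv 0$ (and $x_n^1\equiv 0$ modulo the translation that will produce the path $x(t)$), is the critical element $u_c$ of energy exactly $E_c$.

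For the precompactness of the trajectory, the same extraction procedure is applied to $\vec u_c(\tau_n)$ for an arbitrary sequence $\tau_n\to+\infty$: running the linear profile decomposition plus the contradiction argument on this bounded sequence forces a single profile together with a strongly-vanishing remainder, so that after re-centering by some $x_n=:x(\tau_n)\in\R^d$, the sequence $\bigl(u_c(\tau_n,\cdot-x_n,\cdot),\partial_t u_c(\tau_n,\cdot-x_n,\cdot)\bigr)$ converges in $\mathcal H$ along a subsequence. The hardest ingredient is step \textbf{(i)}, the $L^{\alpha+1}L^{2(\alpha+1)}$ decoupling under asymptotically orthogonal translations: on $\R^d\times\mathbb T$ it is subtler than in the purely Euclidean case because the torus direction does not separate profiles by translation, and one has to work variable by variable, splitting the Fourier series in $y$ as in the proof of \autoref{preli} in order to transfer the euclidean-time-space orthogonality into an $L^{\alpha+1}L^{2(\alpha+1)}$ decoupling on the whole product space.
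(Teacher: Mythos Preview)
Your proposal is correct and follows essentially the same route as the paper, which itself is terse and defers most of steps (i)--(iii) and the precompactness argument to \cite{NS11}. One small clarification on your final remark: the decoupling in (i) is \emph{not} subtler here than in the purely Euclidean case, because \autoref{lpd} carries no $y$-translations (they were absorbed into the profiles in \autoref{preliminary-prop} by compactness of $\mathbb T$), so the asymptotic orthogonality \eqref{eq:6.3} in $(t_n^j,x_n^j)$ alone already separates the nonlinear profiles in $L^{\alpha+1}_tL^{2(\alpha+1)}_{x,y}$ exactly as in $\R^{d+1}$---no Fourier-series splitting in $y$ is needed.
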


\section{Rigidity}\label{sec:rigidity}
This section establishes that the minimal element built in the previous section cannot exist. The first step is to prove the validity of the finite propagation speed in our framework. It will be useful to control the growth of the translation path $x(t)\in\R^d$ given in \autoref{exist:min}.
Let us first recall this simple result.
\begin{lemma}
Let $f$ be smooth and $B(x_0,r)\subset \R^d$ the ball centered in $x_0$ with radius $r$. The following equality holds:
\begin{equation*}
\frac{d}{dr}\int_{B(x_0,r)}f(x)\,dx=\int_{\partial B(x_0,r)}f(\sigma)\,d\sigma,
\end{equation*}
where $\partial B(x_0,r)$ is the boundary of $B(x_0,r)$ and $d\sigma$ is the surface measure on $\partial B(x_0,r)$. 
\end{lemma}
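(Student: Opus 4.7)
The natural approach is to pass to spherical (polar) coordinates centered at $x_0$ and then invoke the fundamental theorem of calculus in the radial variable. Concretely, for $x\in\R^d$ I would write $x=x_0+\rho\omega$ with $\rho\geq 0$ and $\omega\in S^{d-1}$, so that the Lebesgue volume element factors as $dx=\rho^{d-1}\,d\rho\,d\omega$, where $d\omega$ denotes the standard surface measure on the unit sphere.

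With this change of variables, Fubini gives
\begin{equation*}
\int_{B(x_0,r)}f(x)\,dx=\int_0^r\!\!\int_{S^{d-1}}f(x_0+\rho\omega)\,\rho^{d-1}\,d\omega\,d\rho.
\end{equation*}
Since $f$ is smooth (continuity is in fact enough), the function $\rho\mapsto\int_{S^{d-1}}f(x_0+\rho\omega)\,\rho^{d-1}\,d\omega$ is continuous on $[0,\infty)$. The fundamental theorem of calculus then yields
\begin{equation*}
\frac{d}{dr}\int_{B(x_0,r)}f(x)\,dx=\int_{S^{d-1}}f(x_0+r\omega)\,r^{d-1}\,d\omega.
\end{equation*}

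Finally, I would recognize the right-hand side as the surface integral on $\partial B(x_0,r)$: parametrizing the sphere of radius $r$ by $\sigma=x_0+r\omega$, the surface measure satisfies $d\sigma=r^{d-1}\,d\omega$, hence
\begin{equation*}
\int_{S^{d-1}}f(x_0+r\omega)\,r^{d-1}\,d\omega=\int_{\partial B(x_0,r)}f(\sigma)\,d\sigma,
\end{equation*}
which is the claimed identity. There is no real obstacle here; the only small point to be careful about is justifying the use of FTC, which is immediate from continuity of the radial profile under the smoothness hypothesis on $f$.
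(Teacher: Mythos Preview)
Your proof is correct and follows exactly the approach indicated in the paper, which simply states that the identity is straightforward once one switches to radial coordinates. You have merely supplied the details of that switch (polar coordinates plus the fundamental theorem of calculus), so there is nothing to add.
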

\begin{proof}
The proof is straightforward once switched in radial coordinates. 
\end{proof}
We then state the following, which is the finite time propagation speed mentioned above. The notation $B(x_0,r)^c$ stands for $\R^d\setminus B(x_0,r).$
\begin{prop}
Let $u$ be the solution to \eqref{NLKG} with Cauchy datum $(u_0,u_1)$ vanishing on $B(x_0,r)^c\times\mathbb T,$ for some $r>0$. Then $\vec u(t)=(u,\partial_tu)(t)$ vanishes on $K(x_0,r):=\{t\geq0,\, x\in B(x_0,r+t)^c,\, y\in\mathbb T\}.$
\end{prop}
\begin{proof} Fix $r>0,$ $x_0\in\R^d,$ consider the balls $B(x_0,r+t):=B(t+r)$ and define the local energy $E_r(t)$ as 
\begin{equation*}
E_r(t)=\frac12\int_{\mathbb T}\int_{B(r+t)}\left(|\partial_tu|^2+|\nabla u|^2+|u|^2+\frac{2}{\a+2}|u|^{\a+2}\right)(t)\,dx\,dy.
\end{equation*}
Assume that $u(t,x,y)$ is smooth enough (by a classical regularization argument, the following then extends to rougher solutions), and let us calculate the first time derivative of the local energy:
\begin{equation*}
\begin{aligned}
\frac{d}{dt}E_r(t)&=\int_{\mathbb T}\int_{B(r+t)}\partial_tu\partial_{tt}u+\sum_{i\in\{1,\dots,d\}}\partial_{x_i}u\,\partial_{x_i}\partial_tu+\partial_yu\,\partial_{y}\partial_tu\,dx\,dy\\
&\,\quad+\int_{\mathbb T}\int_{B(r+t)}u\partial_tu+\frac{1}{\a+2}|u|^{\a}u\partial_tu\,dx\,dy\\
&\,\quad+\frac12\int_{\mathbb T}\int_{\partial B(r+t)}\left(|\partial_tu|^2+|\nabla u|^2+|u|^2+\frac{2}{\a+2}|u|^{\a+2}\right)(t)\,d\sigma,\\
&=\int_{\mathbb T}\int_{B(r+t)}\partial_tu\partial_{tt}u+div_x(\partial_tu\nabla_xu)-\partial_tu\Delta_xu+\partial_y(\partial_tu\partial_yu)-\partial_tu\partial_{yy}u\\
&\,\quad+\int_{\mathbb T}\int_{B(r+t)}u\partial_tu+\frac{1}{\a+2}|u|^{\a}u\partial_tu\,dx\,dy\\
&\,\quad+\frac12\int_{\mathbb T}\int_{\partial B(r+t)}\left(|\partial_tu|^2+|\nabla u|^2+|u|^2+\frac{2}{\a+2}|u|^{\a+2}\right)(t)\,d\sigma,\\ 
&=\int_{\mathbb T}\,dy\int_{B(r+t)}div_x(\partial_tu\nabla_xu)+\int_{B(r+t)}\,dx\int_{\mathbb T}\partial_y(\partial_tu\partial_yu)\\
&\,\quad+\frac12\int_{\mathbb T}\int_{\partial B(r+t)}\left(|\partial_tu|^2+|\nabla u|^2+|u|^2+\frac{2}{\a+2}|u|^{\a+2}\right)(t)\,d\sigma,\\
&=-\int_{\mathbb T}\,dy\int_{\partial B(r+t)}\partial_tu\nabla u\cdot n_i\,d\sigma\\
&\,\quad+\frac12\int_{\mathbb T}\,dy\int_{\partial B(r+t)}\left(|\partial_tu|^2+|\nabla u|^2+|u|^2+\frac{2}{\a+2}|u|^{\a+2}\right)(t)\,d\sigma.
\end{aligned}
\end{equation*}
where $n_i=n_i(x),\,x\in\partial B,$ denotes the inner normal vector to the boundary of $B.$ Recall that the energy on the whole space in conserved, and so by using Cauchy-Schwartz inequality
\begin{equation*}
\begin{aligned}
\frac{d}{dt}(E-E_r(t))&=\frac{d}{dt}\left\{\frac12\int_{\mathbb T}\int_{B(r+t)^c}\left(|\partial_tu|^2+|\nabla u|^2+|u|^2+\frac{2}{\a+2}|u|^{\a+2}\right)(t)\,dx\,dy\right\},\\
&=\int_{\mathbb T}\int_{\partial B(r+t)}\partial_tu\nabla u\cdot n_i\,d\sigma\,dy\\
&\,\quad-\frac12\int_{\mathbb T}\int_{\partial B(r+t)}\left(|\partial_tu|^2+|\nabla u|^2+|u|^2+\frac{2}{\a+2}|u|^{\a+2}\right)(t)\,d\sigma\,dy,\\
&\leq\frac12\int_{\mathbb T}\int_{\partial B(r+t)}|\partial_tu|^2+|\nabla u|^2\,d\sigma\,dy\\
&\,\quad-\frac12\int_{\mathbb T}\int_{\partial B(r+t)}\left(|\partial_tu|^2+|\nabla u|^2+|u|^2+\frac{2}{\a+2}|u|^{\a+2}\right)(t)\,d\sigma\,dy\leq0,
\end{aligned}
\end{equation*}  
and we obtain  
\begin{equation*}
\frac{d}{dt}\left\{\frac12\int_{\mathbb T}\int_{B(r+t)^c}\left(|\partial_tu|^2+|\nabla u|^2+|u|^2+\frac{2}{\a+2}|u|^{\a+2}\right)(t)\,dx\,dy\right\}\leq0,
\end{equation*}
namely the energy on $B(x_0,r+t)^c\times\mathbb T$ is decreasing. The conclusion follows.
\end{proof} 
We now give an estimate from above of a portion away from zero of the potential energy. This will be essential in the last section dealing with the rigidity part in the Kenig \& Merle scheme. We follows the ideas of Bulut in \cite{Bul}, who in turn was inspired by Killip and Visan \cite{KV1, KV2, KV3}. We point out that in \cite{Bul} the situation is much more involved, since the author is considering energy supercritical NLW. 
\begin{lemma} 
Let $u(t,x,y)$ be a solution to \eqref{NLKG}. If $\{\vec{u}(t)\}_{t\in\R}\subset\mathcal H$ is a relatively compact set and $\vec{u}^*\in\mathcal H$ is one of its limit points, then $\vec{u}^*\neq0.$
\end{lemma}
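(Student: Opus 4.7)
The plan is to proceed by contradiction, reducing the claim to energy conservation combined with the continuity of the energy functional on $\mathcal H$. Suppose $\vec u^\ast = 0$ is a limit point of $\{\vec u(t)\}_{t\in\mathbb R}$ in $\mathcal H$; then there exists a sequence $\{t_n\}_{n\in \mathbb N}\subset \mathbb R$ such that $\vec u(t_n)\to 0$ strongly in $\mathcal H$ as $n\to\infty$.

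The first step is to observe that the energy $E$ defined in \eqref{energy} is continuous on $\mathcal H$. The quadratic part is $\tfrac12\|\vec u\|_{\mathcal H}^2$, which is trivially continuous. For the potential term $\tfrac{1}{\alpha+2}\|u\|_{L^{\alpha+2}}^{\alpha+2}$ I would invoke the Sobolev embedding $H^1(\R^d\times\mathbb T)\hookrightarrow L^{\alpha+2}(\R^d\times\mathbb T)$, which is available in our subcritical range $\alpha<\tfrac{4}{d-1}$ (and unconditionally for $d=1$) because $\mathbb R^d\times\mathbb T$ is a $(d+1)$-dimensional manifold and $\alpha+2<\tfrac{2(d+1)}{d-1}$. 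The reverse triangle inequality then makes $u\mapsto \|u\|_{L^{\alpha+2}}^{\alpha+2}$ continuous.

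Once continuity of $E$ is established, the convergence $\vec u(t_n)\to 0$ in $\mathcal H$ forces $E(\vec u(t_n))\to E(0)=0$. But the energy conservation \eqref{energy} gives $E(\vec u(t_n))=E(\vec u(0))$ for every $n$, so $E(\vec u(0))=0$. Since every term in $E$ is nonnegative, this implies $\vec u(0)=0$ in $\mathcal H$ and, by uniqueness, $u\equiv 0$.

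To close the argument I would remark that the lemma is to be applied to the critical element $u_c$ produced in \autoref{exist:min}, which by construction satisfies $\|u_c\|_{L^{\alpha+1}L^{2(\alpha+1)}}=\infty$ and, in particular, has energy $E_c>0$ (positivity of $E_c$ coming from the small data scattering \autoref{thm:small}). This is incompatible with $u\equiv 0$, and the contradiction forces $\vec u^\ast\neq 0$. The only technical point that is not a one-liner is the continuity of the potential energy, which is precisely where the subcritical Sobolev embedding on the waveguide is used; the rest of the argument is a direct consequence of conservation and the non-triviality of the critical element.
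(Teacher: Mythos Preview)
Your argument is correct and is precisely the paper's approach: the paper's own proof is the single sentence ``This property simply follows from the conservation of energy \eqref{energy},'' and you have merely written out the details (continuity of $E$ on $\mathcal H$ via the subcritical Sobolev embedding, and the implicit nontriviality hypothesis coming from the application to $u_c$). Nothing to change.
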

\begin{proof}
This property simply follows from the conservation of energy \eqref{energy}.
\end{proof}
At this point we can give the following lemma, essentially based on the well-posedness of \eqref{NLKG}, in particular its continuous dependence on the initial data. 
\begin{lemma}\label{lem:6.4}
Let $u(t)$ be a nontrivial solution to \eqref{NLKG} such that $\{u(t,x-x(t),y),\partial_tu(t,x-x(t),y)\}_{t\in\R}$ is relatively compact in $\mathcal H.$ Then for any $A>0$, there exists $C(A)>0$ such that for any $t\in\R,$
\begin{equation}\label{eq:6.6}
\int_t^{t+A}\int_{\mathbb T}\int_{|x-x(t)|\leq R}|u|^{\a+2}(s,x,y)\,dx\,dy\,ds\geq C(A),
\end{equation}
for $R=R(A)$ large enough.
\end{lemma}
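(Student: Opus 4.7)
The plan is to argue by contradiction, combining the precompactness of the trajectory with the stability of the nonlinear Klein-Gordon flow and the energy-subcritical Sobolev embedding. Suppose \eqref{eq:6.6} fails for some fixed $A > 0$; then one can extract sequences $\{t_n\} \subset \R$ and $R_n \to \infty$ (say $R_n = n$) such that
\begin{equation*}
I_n := \int_{t_n}^{t_n+A} \int_{\mathbb T} \int_{|x-x(t_n)|\leq R_n} |u(s,x,y)|^{\a+2} \, dx\, dy\, ds \longrightarrow 0.
\end{equation*}
By the precompactness of $\{(u(t,\cdot-x(t),\cdot),\partial_t u(t,\cdot-x(t),\cdot))\}_{t\in\R}$ in $\mathcal H$, I pass to a subsequence (still indexed by $n$) such that
\begin{equation*}
\vec u_n := (u(t_n,\cdot-x(t_n),\cdot),\,\partial_t u(t_n,\cdot-x(t_n),\cdot)) \longrightarrow \vec w^* = (w_0, w_1) \qquad \text{in } \mathcal H,
\end{equation*}
and the preceding lemma (applied to the shifted family, whose energy is preserved under translation) forces $\vec w^* \neq 0$.

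Next, let $w$ be the global solution to \eqref{NLKG} with Cauchy data $\vec w^*$, and set $V_n(\tau, z, y) := u(t_n+\tau,\, z - x(t_n),\, y)$. By the $x$-translation invariance of \eqref{NLKG}, each $V_n$ solves \eqref{NLKG} with data $\vec u_n$ at $\tau = 0$. Since $w$ has finite Strichartz norm $M := \|w\|_{L^{\a+1}([0,A]; L^{2(\a+1)})}$ by \autoref{thm:strichartz}, and the free evolution of $\vec u_n - \vec w^*$ satisfies a Strichartz bound by $\|\vec u_n - \vec w^*\|_{\mathcal H} \to 0$, the long-time perturbation \autoref{pert-lemma} applied with $v := w$ and $u := V_n$ on $[0, A]$ yields $\vec V_n \to \vec w$ in $\mathcal C([0, A]; \mathcal H)$. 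Because $\a + 2$ lies strictly below the $H^1$-critical exponent $\frac{2(d+1)}{d-1}$ (the condition $\a < 4/(d-1)$ being exactly this), the Sobolev embedding $H^1(\R^d\times\mathbb T) \hookrightarrow L^{\a+2}(\R^d\times\mathbb T)$ upgrades this to $V_n \to w$ in $\mathcal C([0, A]; L^{\a+2})$.

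To conclude, I change variables in $I_n$ via $s = t_n + \tau$ and translate $x$ by $x(t_n)$; by the translation invariance of Lebesgue measure, $I_n$ is recast as the integral of $|V_n(\tau, z, y)|^{\a+2}$ over $[0, A] \times \mathbb T \times B_n$, where $B_n \subset \R^d$ is a ball of radius $R_n \to \infty$ whose center is determined by the compactness convention (and whose location is controlled, for $\tau \in [0, A]$, by the finite propagation speed proved at the start of this section, so that $V_n$'s $L^{\a+2}$-mass is tight around that center uniformly in $\tau$). The strong $L^{\a+2}$ convergence $V_n \to w$ on $[0, A]$ together with $R_n \to \infty$ and $w(\tau,\cdot,\cdot) \in L^{\a+2}$ then gives, via Fatou's lemma,
\begin{equation*}
0 = \liminf_{n\to\infty} I_n \;\geq\; \int_0^A \|w(\tau)\|_{L^{\a+2}}^{\a+2} \, d\tau.
\end{equation*}
But $\vec w^* \neq 0$, so by uniqueness for \eqref{NLKG} we cannot have $w \equiv 0$ on $[0, A]$ (otherwise $\vec w^* = 0$), hence $w(\tau) \not\equiv 0$ on a set of positive measure; the right-hand side is therefore strictly positive, a contradiction.

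The main obstacle I expect is the upgrade from the single-time convergence $\vec u_n(0) \to \vec w^*$ in $\mathcal H$ to uniform convergence $\vec V_n \to \vec w$ on the entire slab $[0, A]$: this does not follow from the short-time well-posedness alone, and it is exactly the role played by \autoref{pert-lemma}, once one checks that $\|w\|_{L^{\a+1}([0,A]; L^{2(\a+1)})}$ is bounded in terms of $\|\vec w^*\|_{\mathcal H}$ and $A$. A secondary bookkeeping point is the interplay between finite propagation speed and the precompactness of the shifted family, which guarantees that the ball $|x - x(t_n)| \leq R_n$ correctly localizes the $L^{\a+2}$-mass of $V_n$ uniformly in $\tau \in [0, A]$; without this, the Fatou lower bound would be worthless.
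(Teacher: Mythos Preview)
Your proposal follows essentially the same contradiction strategy as the paper's proof: extract a subsequence along which the translated data converge in $\mathcal H$ to a nonzero limit $\vec w^*$, pass to the limit in the nonlinear evolution, and conclude that the limiting solution $w$ has vanishing $L^{\a+2}$ norm on $[0,A]$, contradicting $\vec w^*\neq 0$. Two points of comparison are worth noting.

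First, the paper separates the argument into two steps: it negates the \emph{full-space} bound $\int_t^{t+A}\int_{\R^d\times\mathbb T}|u|^{\a+2}\geq C'(A)$ and derives the contradiction there, where the change of variables $x\mapsto x-x(t_n)$ is harmless by translation invariance of Lebesgue measure on all of $\R^d$. Only afterwards does it localize to the ball $|x-x(t)|\leq R$, using tightness from precompactness together with $|x(s)-x(t)|\lesssim A$ for $s\in[t,t+A]$. Your one-step approach, negating the localized inequality directly and sending $R_n\to\infty$, is valid in spirit, but it forces you to track the center of $B_n$ after the change of variables; with your definition $V_n(\tau,z,y)=u(t_n+\tau,z-x(t_n),y)$ and the substitution you describe, the ball is not centered at the origin, and finite propagation speed alone does not bound $|x(t_n)|$. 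This is precisely the ``secondary bookkeeping point'' you flag, and it is where the cleanest fix is the paper's ordering: prove the full-space bound first, then localize.

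Second, invoking \autoref{pert-lemma} to upgrade $\vec u_n\to\vec w^*$ in $\mathcal H$ to $\vec V_n\to\vec w$ in $\mathcal C([0,A];\mathcal H)$ is correct but heavier than necessary: on a fixed finite interval $[0,A]$ this is simply continuous dependence on the data, which follows from local well-posedness and Strichartz estimates (and is all the paper invokes).
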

\begin{proof}
We argue by contradiction, supposing that there exists a sequence of times $\{t_n\}_{n\in\mathbb N}$ such that 
\begin{equation*}
\int_{t_n}^{t_n+A}\int_{\R^d\times\mathbb T}|u|^{\a+2}(s,x,y)\,dx\,dy\,ds<\frac1n.
\end{equation*}
By compactness, up to subsequence still denoted with the subscript $n$,  $$(u(t_n,x-x(t_n),y),\partial_tu(t_n,x-x(t_n),y))\ra(f,g)\in\mathcal H.$$ Let  $(w(0),\partial_tw(0))=(f,g)$ be an initial datum and $w(t)$ be the corresponding solution to \eqref{NLKG}: then we have, by the fact that $u\neq0,$
\begin{equation}\label{energylimit}
E(w,\partial_tw)=E(f,g)=\lim_{n\to\infty} E(u(t_n,x-x(t_n),y),\partial_tu(t_n,x-x(t_n),y))=E(u_0,u_1)\neq0.
\end{equation}
Local well-posedness and Strichartz estimates imply
\begin{equation*}
\begin{aligned}
0&=\lim_{n\to\infty}\int_{t_n}^{t_n+A}\int_{\R^d\times\mathbb T}|u|^{\a+2}(s,x,y)\,dx\,dy\,ds\\
&=\lim_{n\to\infty}\int_{0}^{A}\int_{\R^d\times\mathbb T}|u|^{\a+2}(t_n+s,x,y)\,dx\,dy\,ds\\
&=\lim_{n\to\infty}\int_{0}^{A}\int_{\R^d\times\mathbb T}|u|^{\a+2}(t_n+s,x-x(t_n),y)\,dx\,dy\,ds\\
&=\int_{0}^{A}\int_{\R^d\times\mathbb T}|w|^{\a+2}(s,x,y)\,dx\,dy\,ds,
\end{aligned}
\end{equation*}
which in turn gives that $w(t)=0$ almost everywhere in $(0,A).$ This contradicts \eqref{energylimit}, then $$\int_t^{t+A}\int_{\mathbb T\times\R^d}|u|^{\a+2}\,dxdydt\geq C^\prime(A).$$ By exploiting again the precompactness property of the solution
\begin{equation*}
\begin{aligned}
\int_t^{t+A}\int_{\mathbb T}\int_{|x-x(t)|\leq R}|u|^{\a+2}\,dxdydt&=\int_t^{t+A}\left\{\int_{\mathbb T\times\R^d}|u|^{\a+2}\,dxdy -\int_{\mathbb T}\int_{|x-x(t)|\geq R}|u|^{\a+2}\,dxdy\right\}\,dt,\\
&\geq C^\prime(A)-\frac{C^\prime(A)}{2}=\frac{C^\prime(A)}{2}=:C(A).
\end{aligned}
\end{equation*} 
\end{proof}

\begin{corollary}\label{l2}
By interpolation the same property can be claimed for the localized $L^2-$norm of $u.$ More precisely, under the same assumption of \autoref{lem:6.4} on $u,$ for any $A>0$ there exists $C(A)>0$ such that for any $t\in\R,$
\begin{equation}\label{eq:6.6bis}
\int_t^{t+A}\int_{\mathbb T}\int_{|x-x(t)|\leq R}|u|^2(s,x,y)\,dx\,dy\,ds\geq C(A)
\end{equation}
for $R=R(A)$ large enough.
\end{corollary}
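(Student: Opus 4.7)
The plan is to derive the $L^2$ lower bound from the $L^{\alpha+2}$ lower bound of \autoref{lem:6.4} by interpolating $L^{\alpha+2}$ between $L^2$ and the Sobolev-critical exponent $L^{2^*}$ on the product space, and controlling the high endpoint through energy conservation.

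First, note that the subcriticality assumption $\alpha < \frac{4}{d-1}$ is precisely equivalent to $\alpha + 2 < 2^*$ (with $2^* = \frac{2(d+1)}{d-1}$ for $d\geq 2$ and $2^* = +\infty$ for $d=1$, as defined in \eqref{sob}). Hence there exists $\theta\in(0,1)$ with
\[
\frac{1}{\alpha+2}=\frac{\theta}{2}+\frac{1-\theta}{2^*}.
\]
For every $s\in[t,t+A]$, applying H\"older/interpolation on the measurable set $E_t:=\{|x-x(t)|\leq R\}\times\mathbb T$ yields
\[
\|u(s)\|_{L^{\alpha+2}(E_t)}^{\alpha+2}\leq \|u(s)\|_{L^{2}(E_t)}^{(\alpha+2)\theta}\,\|u(s)\|_{L^{2^*}(E_t)}^{(\alpha+2)(1-\theta)}.
\]
By the Sobolev embedding $H^1(\R^d\times\mathbb T)\hookrightarrow L^{2^*}(\R^d\times\mathbb T)$ (see \cite{Heb1}) combined with the conservation \eqref{energy} of the energy, one has the uniform bound $\|u(s)\|_{L^{2^*}(\R^d\times\mathbb T)}\leq C(E(0))$ for all $s\in\R$, so that
\[
\|u(s)\|_{L^{\alpha+2}(E_t)}^{\alpha+2}\lesssim \|u(s)\|_{L^{2}(E_t)}^{(\alpha+2)\theta}.
\]

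Next, I will integrate in time over $[t,t+A]$ and invoke \eqref{eq:6.6} to bound the left-hand side from below by $C(A)$. If $(\alpha+2)\theta\geq 2$, I will use that $\|u(s)\|_{L^2(E_t)}\leq\|u(s)\|_{L^2}\leq\sqrt{2E(0)}$ (again from \eqref{energy}) to factor out a power and be left with $\int_t^{t+A}\|u(s)\|_{L^2(E_t)}^2\,ds$. If $(\alpha+2)\theta<2$, I will use Jensen's inequality applied to the probability measure $\frac{ds}{A}$ on $[t,t+A]$ with the convex map $r\mapsto r^{2/((\alpha+2)\theta)}$, which gives
\[
\int_t^{t+A}\|u(s)\|_{L^2(E_t)}^{(\alpha+2)\theta}\,ds\leq A^{1-(\alpha+2)\theta/2}\left(\int_t^{t+A}\|u(s)\|_{L^2(E_t)}^{2}\,ds\right)^{(\alpha+2)\theta/2}.
\]
In either case one obtains
\[
\int_t^{t+A}\int_{\mathbb T}\int_{|x-x(t)|\leq R}|u|^{2}(s,x,y)\,dx\,dy\,ds\geq C'(A),
\]
for the same $R=R(A)$ given by \autoref{lem:6.4}, which is the desired conclusion.

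This argument is essentially routine once the interpolation picture is set up; the only mild care needed is to confirm that the exponent $\theta$ produced by the interpolation identity is admissible in the allowed range of $\alpha$, and to handle uniformly the two cases $(\alpha+2)\theta\gtrless 2$ (only the latter requiring Jensen). No obstacle is really expected, consistent with the author's comment that the corollary follows ``by interpolation''.
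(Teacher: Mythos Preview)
Your approach is correct and is exactly what the paper intends by its one-line remark ``by interpolation''. The paper provides no further details, so your argument fills in precisely the gap: interpolate $L^{\alpha+2}$ between $L^2$ and a higher Sobolev-embedded Lebesgue space, control the high endpoint uniformly via energy conservation, and then transfer the time-integrated lower bound from \autoref{lem:6.4}.

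One small technical point deserves a tweak in the case $d=1$. There $2^*=+\infty$ by convention, but $H^1(\R\times\mathbb T)$ does \emph{not} embed into $L^\infty$ (the underlying manifold is two-dimensional and this is the borderline Sobolev case), so the step $\|u(s)\|_{L^{2^*}}\leq C(E(0))$ fails as written. The fix is immediate: for $d=1$ simply choose any finite exponent $p>\alpha+2$ in place of $2^*$, use the valid embedding $H^1(\R\times\mathbb T)\hookrightarrow L^p(\R\times\mathbb T)$ (cf.\ the discussion at the start of the proof of \autoref{preli}), and run the identical interpolation with $\theta$ determined by $\tfrac{1}{\alpha+2}=\tfrac{\theta}{2}+\tfrac{1-\theta}{p}$. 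Everything downstream (the case split on $(\alpha+2)\theta\gtrless 2$ and the Jensen step) goes through unchanged.
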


The last ingredient to derive a contradiction to the existence of a such precompact solution is an a priori bound for the super-quadratic term of the energy which is due to Nakanishi, see \cite{Nak}. The latter is a remarkable extension in the euclidean framework $\mathbb R^{m}$ with $m=1,2$ of the well-known Morawetz estimate proved by Morawetz and Strauss, see \cite{Mor, MS}, for higher dimensions. This a priori bounds lead to the scattering in energy space both for the nonlinear Klein-Gordon equation and the nonlinear Schr\"odinger equation posed in the euclidean space.  
\subsection{Nakanishi/Morawetz-type estimate}\label{sec:morawetz}

We begin this section by giving the analogue in our domain of the decay result due to Nakanishi, \cite{Nak}. Our approach is to simply use a multiplier that does not consider all the variables: neither the compact factor of the product space we work on (the $y-$variable), nor a set of $d-1$ euclidean variables ($x_2, \; \dots \; ,x_d$ for instance) will be ``seen'' by the multiplier. 
Consequently, we will show how the Nakanishi/Morawetz type estimate in one dimension is enough for a contradiction argument which will exclude soliton-like solutions, i.e. the $u_c$ built in \autoref{exist:min}. \\

We report verbatim the proof contained in \cite{Nak}*{Lemma 5.1, equation (5.1)}, then we analyze the extra term given by the remaining part of the second order space operator involved in the equation. First, Nakanishi introduces the following term with relative notations (recall that in the following we are in a pure euclidean framework, with $x\in\R^m$ and $m=1,2$):
\begin{equation*}
r=|x|, \quad \theta=\frac{x}{r},\quad \lambda=\sqrt{t^2+r^2},\quad \Theta=\frac{(-t,x)}{\lambda}
\end{equation*}
\begin{equation*}
u_r=\theta\cdot\nabla_xu,\quad u_\theta=\nabla_xu-\theta u_r
\end{equation*} 
\begin{equation*}
l(u)=\frac12\left(-|\partial_t u|^2+|\nabla_xu|^2+|u|^2+\frac{2}{\a+2}|u|^{\a+2}\right)
\end{equation*} 
\begin{equation*}
(\partial_0,\partial_1,\partial_2)=(-\partial^0,\partial^1,\partial^2)=(\partial_t,\nabla_x)
\end{equation*} 
\begin{equation*}
g=\frac{m-1}{2\lambda}+\frac{t^2-r^2}{2\lambda^3},\quad M=\Theta\cdot(\partial_t u,\nabla_x u)+ug
\end{equation*} 
\begin{equation*}
(\partial_t^2-\Delta_x)g=-\frac{5}{2\lambda^3}+3\frac{t^2-r^2}{\lambda^5}+15\frac{(t^2-r^2)^2}{2\lambda^7}.
\end{equation*} 
\noindent Then by multiplying the equation $\partial_t^2u-\Delta_xu+u+|u|^{\a}u=0$ by $M,$ with $u=u(t,x),$ we obtain the relation
\begin{equation}\label{naka-ide}
\begin{aligned}
0=(\partial_t^2u-\Delta_xu+u+|u|^{\a}u)M&=\sum_{\beta=0}^m\partial_\beta\left(-M\partial^\beta u+l(u)\Theta_\beta +\frac{|u|^2}{2}\partial^\beta g\right)\\
&\,\quad+\frac{|u_\omega|^2}{\lambda}+\frac{|u|^2}{2}(\partial_t^2-\Delta_x)g+\frac{\a}{\a+2}|u|^{\a+2}g,
\end{aligned}
\end{equation}
where $u_\omega$ is the projection of $(\partial_tu,\nabla_xu)$ on the tangent space of $t^2-|x|^2=c,$ $c$ being a constant.\\

\noindent We focus on $m=1$ and we go back to \eqref{NLKG}. We introduce the compact notation $$\R^{d-1}\times\mathbb T=:\mathcal M\ni z:=(\bar x,y)=(x_2,\dots,x_d,y).$$ Then the analogous of \eqref{naka-ide} is the following:
\begin{equation}\label{naka-ide2}
\begin{aligned}
0=(\partial_t^2u-\Delta u+u+|u|^{\a}u)M&=\sum_{\beta\in\{0,1\}}\partial_\beta\left(-M\partial^\beta u+l(u)\Theta_\beta +\frac{|u|^2}{2}\partial^\beta g\right)\\
&\quad+\frac{|u_\omega|^2}{\lambda}+\frac{|u|^2}{2}(\partial_t^2-\Delta_x)g+\frac{\a}{\a+2}|u|^{\a+2}g\\
&\quad-M\Delta_zu.
\end{aligned}
\end{equation}
Observe  that the term $g$ is nonnegative only in the region where $r<t.$ Then after integrating \eqref{naka-ide2} (now $u=u(t,x_1,z)$) on $\mathcal C:=\{(t,x_1)\,|\,2<t<T, |x_1|=r<t\}\times\mathcal M,$ using the divergence theorem, the last relation we obtain is:
\begin{equation*}
\begin{aligned}
\left\{\int_{\mathcal M}\int_{r<t}-\partial_tuM+l(u)\frac{t}{\lambda}+\frac{|u|^2}{2}\partial_tg\,dx_1dz\right\}\bigg|^{t=T}_{t=2}=&\int_{\mathcal C}\frac{|u_\omega|^2}{\lambda}+\frac{|u|^2}{2}(\partial_t^2-\partial_{x_1}^2)g+\frac{\a}{\a+2}|u|^{\a+2}g\,dx_1dz\,dt\\
&\,+\frac{\sqrt 2}{2}\int_{\mathcal M}\int_{2<r=t<T}|u|^2+\frac{2}{\a+2}|u|^{\a+2}\,dx_1dz\\
&\,-\int_{\mathcal C}M\Delta_zu\,dx_1dz\,dt,
\end{aligned}
\end{equation*}
noticing that $|u_\theta|^2=0$ if $m=1.$ The l.h.s. of the above identity is bounded by the energy, as well as the middle term in the first integral in the r.h.s. thanks to the estimate
\begin{equation*}
\left|\int_{\mathcal C}\frac{|u|^2}{2}(\partial_t^2-\partial_{x_1}^2)g\right|\lesssim \int_2^T\int_{\mathbb T\times\R^d}\frac{|u|^2}{t^3}\,dx_1dzdt\lesssim E.
\end{equation*}
The energy flux through the curved surface, i.e. the second integral in the r.h.s. is estimated by the energy. In fact we have the following:
\begin{lemma}
Any smooth solution $u$ to \eqref{NLKG} satisfies:
\begin{equation}\label{flux}
\int_{\mathcal M}\int_{2<|x_1|=t<T}|\partial_tu-\theta\partial_{x_1}u|^2+|\nabla_zu|^2+|u|^2+\frac{2}{\a+2}|u|^{\a+2}\,d\sigma\,dz\lesssim E.
\end{equation}
\end{lemma}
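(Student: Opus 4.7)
My plan is to derive \eqref{flux} from the divergence theorem applied to the standard energy flux identity on the truncated solid forward cone in the $(t,x_1)$-plane. Multiplying \eqref{NLKG} by $\partial_t u$ and rearranging, exactly as in the proof of finite speed of propagation that precedes this lemma, yields the pointwise conservation law
\begin{equation*}
\partial_t e(u)-\operatorname{div}_{x,y}\bigl(\partial_t u\,\nabla_{x,y}u\bigr)=0,\qquad e(u):=\tfrac12\bigl(|\partial_t u|^2+|\nabla u|^2+|u|^2+\tfrac{2}{\alpha+2}|u|^{\alpha+2}\bigr).
\end{equation*}
I then integrate first in the transverse coordinates $z=(\bar x,y)\in\mathcal M=\R^{d-1}\times\mathbb T$: the $\partial_y$-divergence drops by periodicity of $\mathbb T$, and the $\partial_{\bar x}$-divergence drops at infinity (see the last paragraph for the justification). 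What remains is the two-dimensional conservation law $\partial_t\mathcal E=\partial_{x_1}\mathcal P$ with $\mathcal E(t,x_1):=\int_{\mathcal M}e(u)\,dz$ and $\mathcal P(t,x_1):=\int_{\mathcal M}\partial_t u\,\partial_{x_1}u\,dz$.

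Next, I apply the planar divergence theorem to the spacetime vector field $(\mathcal E,-\mathcal P)$ on $\Omega=\{(t,x_1):2\le t\le T,\ |x_1|\le t\}$. Its boundary comprises two flat spacelike slices and the two branches $x_1=\pm t$ of the lateral cone, the latter carrying outward unit normals $\tfrac{1}{\sqrt2}(-1,\pm1)$ with surface element $\sqrt{2}\,dt$. The two horizontal contributions are $\int\mathcal E(T,\cdot)\,dx_1$ and $-\int\mathcal E(2,\cdot)\,dx_1$, each of absolute value at most the total energy $E$, since $\int_{|x_1|<t_0}\mathcal E(t_0,\cdot)\,dx_1\le E$ by energy conservation. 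The lateral contribution reorganises, after reintroducing the $z$-integral, into
\begin{equation*}
\int_{2<|x_1|=t<T}\int_{\mathcal M}\bigl(e(u)+\theta\,\partial_t u\,\partial_{x_1}u\bigr)\,dt\,dz,\qquad \theta=\operatorname{sgn}(x_1),
\end{equation*}
and the pointwise algebraic identity
\begin{equation*}
e(u)+\theta\,\partial_t u\,\partial_{x_1}u=\tfrac12\bigl|\partial_t u+\theta\,\partial_{x_1}u\bigr|^2+\tfrac12\bigl(|\nabla_z u|^2+|u|^2+\tfrac{2}{\alpha+2}|u|^{\alpha+2}\bigr)
\end{equation*}
expresses this as a positive multiple of the LHS of \eqref{flux} (the combination $\partial_t u\pm\theta\,\partial_{x_1}u$ being the null derivative along the cone, the precise sign depending on the orientation convention chosen for $\theta$). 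Summing the three boundary contributions to zero yields the claimed bound with an implicit constant depending only on $\alpha$ and $d$.

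The delicate step I expect to be the main obstacle is the justification that integrating the $\partial_{\bar x}$-divergence over $\R^{d-1}$ produces no boundary term at infinity, which is needed to pass from the $(d{+}1)$-dimensional conservation law down to the $1{+}1$-dimensional one. For smooth Cauchy data compactly supported in $x$, this is immediate from the finite-speed-of-propagation proposition proved just above, since $u(t,\cdot)$ then has compact $x$-support at every finite time. The general case of energy data will follow by a density argument: approximate $(u_0,u_1)$ in $\mathcal H$ by compactly supported data, apply the inequality to the approximants, and pass to the limit using well-posedness in $\mathcal H$, energy conservation and the subcritical growth of the nonlinearity, invoking Fatou's lemma on the non-negative integrand on the cone to retain the estimate in the limit.
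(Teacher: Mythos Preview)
Your proposal is correct and follows essentially the same approach as the paper. The paper differentiates the local energy $e(t)=\tfrac12\int_{\mathcal M}\int_{|x_1|<t}(|\partial_tu|^2+|\partial_{x_1}u|^2+|u|^2+\tfrac{2}{\alpha+2}|u|^{\alpha+2})$ in $t$ and recovers the $|\nabla_z u|^2$ term a posteriori via the cross term $\partial_t u\,\Delta_z u$, whereas you package everything into the full energy density from the start and apply the spacetime divergence theorem on the truncated cone; these are two presentations of the same energy-flux computation, and your discussion of the decay of the $\bar x$-boundary terms (via finite speed of propagation for smooth compactly supported data) is precisely what the paper leaves implicit under the hypothesis ``smooth solution''.
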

\begin{proof}
The proof repeats the same analysis performed to prove the finite propagation speed property. Define
\begin{equation*}
e(t):=\frac12\int_{\mathcal M}\int_{|x_1|<t}\left(|\partial_tu|^2+|u|^2+|\partial_{x_1}u|^2+\frac{2}{\a+2}|u|^{\a+2}\right)(t,x_1,z)\,dx_1\,dz.
\end{equation*}  
Differentiating $e(t)$ with respect to $t$, we obtain
\begin{equation*}
\begin{aligned}
\frac{d}{dt}e(t)&=\int_{\mathcal M}\int_{|x_1|<t}\left(\partial_tu\partial_t^2u+\partial_{x_1}u\partial_{x_1}\partial_tu+\partial_tuu+|u|^\a u\partial_tu\right)\,dx_1\,dz\\
&\,\quad+\frac12\int_{\mathcal M}\int_{|x_1|=t}\left(|\partial_tu|^2+|u|^2+|\partial_{x_1}u|^2+\frac{2}{\a+2}|u|^{\a+2}\right)\,d\sigma\,dz,\\
&=\int_{\mathcal M}\int_{|x_1|<t}\partial_tu\left(\partial_t^2u-\partial_{x_1}^2u+u+|u|^\a u\right)+\partial_{x_1}(\partial_{x_1}u\cdot\partial_tu)\,dx_1\,dz\\
&\,\quad+\frac12\int_{\mathcal M}\int_{|x_1|=t}\left(|\partial_tu|^2+|u|^2+|\partial_{x_1}u|^2+\frac{2}{\a+2}|u|^{\a+2}\right)\,d\sigma\,dz,\\
&=\int_{\mathcal M}\int_{|x_1|<t}\partial_tu\left(\partial_t^2u-\Delta u+u+|u|^\a u\right)+\partial_tu\Delta_zu+\partial_{x_1}(\partial_{x_1}u\cdot\partial_tu)\,dx_1\,dz\\
&\,\quad+\frac12\int_{\mathcal M}\int_{|x_1|=t}\left(|\partial_tu|^2+|u|^2+|\partial_{x_1}u|^2+\frac{2}{\a+2}|u|^{\a+2}\right)\,d\sigma\,dz,\\
&=\int_{\mathcal M}\int_{|x_1|<t}\partial_tu\Delta_zu+\partial_{x_1}(\partial_{x_1}u\cdot\partial_tu)\,dx_1\,dz\\
&\,\quad+\frac12\int_{\mathcal M}\int_{|x_1|=t}\left(|\partial_tu|^2+|u|^2+|\partial_{x_1}u|^2+\frac{2}{\a+2}|u|^{\a+2}\right)\,d\sigma\,dz,\\
&=-\frac12\int_{\mathcal M}\int_{|x_1|<t}\partial_t|\nabla_zu|^2\,dx_1\,dz\\
&\,\quad+\frac12\int_{\mathcal M}\int_{|x_1|=t}\left(|\partial_tu|^2+|u|^2+|\partial_{x_1}u|^2+\frac{2}{\a+2}|u|^{\a+2}-2\theta\partial_{x_1}u\cdot\partial_tu\right)\,d\sigma\,dz,\\
&=-\frac12\frac{d}{dt}\int_{\mathcal M}\int_{|x_1|<t}|\nabla_zu|^2\,dx\,dz+ \frac12\int_{\mathcal M}\int_{|x_1|=t}|\nabla_zu|^2\,d\sigma\,dz\\
&\,\quad+\frac12\int_{\mathcal M}\int_{|x_1|=t}\left(|\partial_tu-\theta\partial_{x_1}u|^2+|u|^2+\frac{2}{\a+2}|u|^{\a+2}\right),
\end{aligned}
\end{equation*}
therefore, integrating with respect to the time variable from $2$ to $T$ we obtain \eqref{flux}.
\end{proof}
\noindent Moreover, the energy estimate on the surface of the light cone gives
\begin{equation*}
\sup_{t}\int_{\R^{d-1}\times\mathbb T}\int_{\R}|u(|x_1|+t,x_1,z)|^2\,dx_1dz\lesssim E.
\end{equation*}
We now analyze the term $-\int_{\mathcal C}M\Delta_zu\,dx_1dz\,dt$ in \eqref{naka-ide2}. We rewrite explicitly the term to be integrated as   
\begin{equation*}
-M\Delta_zu=-div_z\left(M\nabla_zu\right)+\nabla_zu\cdot\nabla_zM:=\mathcal A+\mathcal B.
\end{equation*}
The second term is explicitly given by 
\begin{equation*}
\begin{aligned}
\mathcal B&=-\frac{t}{2\lambda}\partial_t|\nabla_zu|^2+\frac{1}{2\lambda}x_1\cdot\partial_{x_1}|\nabla_zu|^2+g|\nabla_zu|^2,\\
&=-\frac12\partial_t\left(\frac{t}{\lambda}|\nabla_zu|^2\right)+\frac12|\nabla_zu|^2\partial_t\left(\frac{t}{\lambda}\right)+\frac{1}{2\lambda}\left(\partial_{x_1}(x_1|\nabla_zu|^2)-|\nabla_zu|^2\right)+g|\nabla_zu|^2,\\
&=-\frac12\partial_t\left(\frac{t}{\lambda}|\nabla_zu|^2\right)+\frac{|x_1|^2}{2\lambda^3}|\nabla_zu|^2+\frac{1}{2\lambda}\partial_{x_1}(x_1|\nabla_zu|^2)-\frac{|\nabla_zu|^2}{2\lambda}+g|\nabla_zu|^2,\\
&=-\frac12\partial_t\left(\frac{t}{\lambda}|\nabla_zu|^2\right)+\frac{|x_1|^2}{2\lambda^3}|\nabla_zu|^2+\partial_{x_1}\left(\frac{x_1}{2\lambda}|\nabla_zu|^2\right)-\partial_{x_1}\left(\frac{1}{2\lambda}\right) x_1|\nabla_zu|^2-\frac{|\nabla_zu|^2}{2\lambda}+g|\nabla_zu|^2,\\
&=-\frac12\partial_t\left(\frac{t}{\lambda}|\nabla_zu|^2\right)+\frac{|x_1|^2}{2\lambda^3}|\nabla_zu|^2+\partial_{x_1}\left(\frac{x_1}{2\lambda}|\nabla_zu|^2\right)+\frac{|x_1|^2}{2\lambda^3}|\nabla_zu|^2-\frac{|\nabla_zu|^2}{2\lambda}+g|\nabla_zu|^2,\\
&=-\frac12\partial_t\left(\frac{t}{\lambda}|\nabla_zu|^2\right)+\partial_{x_1}\left(\frac{x_1}{2\lambda}|\nabla_zu|^2\right),
\end{aligned}
\end{equation*}
\noindent and then, after integration, it can be estimated by the energy on the whole space, while the divergence term $\mathcal B$ disappears using the Gauss-Green theorem.
In conclusion  
\begin{equation*}
\int_2^\infty\int_{\mathcal M\times\{|x_1|<t\}}\frac{|u_\omega|^2}{\lambda}+\frac{\a}{\a+2}|u|^{\a+2}g\,dx_1dzdt\lesssim E.
\end{equation*}

\noindent The Nakanishi/Morawetz-type estimate follows as in \cite{Nak}:
\begin{equation}\label{na-mo}
\int_{\R}\int_{\R^{d}\times\mathbb T}\frac{\min\{|u|^2, |u|^{\a+2}\}}{\langle t\rangle\log(|t|+2)\log(\max\{|x_1|-t,2\})}\lesssim E.
\end{equation}


\noindent We have now all the elements allowing the exclusion of the soliton-like solution. 
\subsection{Extinction of the minimal element} With the aforementioned tool, we are in position to obtain a contradiction with respect to the our hypothesis on the finiteness of the critical energy $E_c.$ Consider the upper bound $C=C(E(u))$ appearing in \eqref{na-mo}, then for any $T>2$ we can write
\begin{equation}\label{contra-estimate}
\begin{aligned}
C&\geq \int_{\R}\int_{\R^{d-1}\times\mathbb T}\int_\R\frac{\min\{|u|^2, |u|^{\a+2}\}}{\langle t\rangle\log(|t|+2)\log(\max\{|x_1|-t,2\})}\,dx_1dzdt,\\
&\geq\int_2^T\int_{\R^{d-1}\times\mathbb T}\int_\R\frac{\min\{|u|^2, |u|^{\a+2}\}}{\langle t\rangle\log(|t|+2)\log(\max\{|x_1|-t,2\})}\,dx_1dzdt,\\
&\geq\int_2^T\int_{\mathbb T}\int_{|x-x(t)|\leq R}\frac{\min\{|u|^2, |u|^{\a+2}\}}{\langle t\rangle\log(|t|+2)\log(\max\{|x_1|-t,2\})}\,dx_1dzdt.
\end{aligned}
\end{equation}
The finite propagation speed implies that $|x(t)-x(0)|\leq t+c_0$ for $t>0,$ then 
\begin{equation*}
|x|\leq |x-x(t)|+|x(t)-x(0)|+|x(0)|\leq R+t+c_0+c_1,
\end{equation*}
so that $|x_1|-t\leq c+R.$ 
With $[T]$ being the usual floor function of $T,$ we are able to carry on with the chain above with 
\begin{equation*}
\begin{aligned}
\eqref{contra-estimate}&\gtrsim\int_2^T\frac{1}{\langle t\rangle\log(|t|+2)}\int_{\mathbb T}\int_{|x-x(t)|\leq R}\min\{|u|^2, |u|^{\a+2}\}\,dxdt,\\
&\gtrsim\int_2^{[T]}\frac{1}{\langle t\rangle\log(|t|+2)}\int_{\mathbb T}\int_{|x-x(t)|\leq R}\min\{|u|^2, |u|^{\a+2}\}\,dxdt,\\
&=\sum_{j=3}^{[T]}\int_{j-1}^{j}\frac{1}{\langle t\rangle\log(|t|+2)}\int_{\mathbb T}\int_{|x-x(t)|\leq R}\min\{|u|^2, |u|^{\a+2}\}\,dxdt,\\
&\gtrsim\sum_{j=3}^{[T]}\frac{1}{\langle j\rangle\log (j+2)}\int_{j-1}^{j}\int_{\mathbb T}\int_{|x-x(t)|\leq R}\min\{|u|^2,|u|^{\a+2}\}\,dxdt,\\
&\gtrsim C(1)\sum_{j=3}^{[T]}\frac{1}{\langle j\rangle\log (j+2)}\sim\int_{2}^{T}\frac{1}{\langle t\rangle\log(t+2)}\,dt.
\end{aligned}
\end{equation*}

\noindent In the last step we used the property stated in \autoref{lem:6.4} and \autoref{l2} (more precisely \eqref{eq:6.6} and \eqref{eq:6.6bis}) above for a suitable choice of the radius $R.$ This is sufficient establish a contradiction by taking $T$ large enough, since for $T\ra+\infty$
\begin{equation*}
\int_{2}^{T}\frac{1}{\langle t\rangle\log t}\,dt\sim\int_{2}^{\infty}\frac{1}{t\log t}\,dt,
\end{equation*}
and the latter diverges, while the chain of inequalities above should imply a uniform bound. 

\section*{Acknowledgments}
\noindent The authors wish to express their gratitude to Professor Nicola Visciglia for suggesting the problem and for
his support during the preparation of this paper. The authors would also like to thank Professor Kenji
Nakanishi for his kind and prompt availability to explain some points of \cite{IMN11}, and for his stimulating remarks about our paper. 
L.H. was partially supported by  ``INDAM-COFUND: Project NLDISC'' and ``Chrysalide: Comportements asymptotiques de solutions d'EDP non-lin\'eaires dispersives'' from UBFC.
\appendix

\section{Interpolation}

\begin{lemma}\label{app:lemma1}
Let $\alpha \in \left(\frac4d,\frac{4}{d-1}\right)$ for $2\leq d\leq4$ or $\a>4$ if $d=1.$ Consider $\{u_n\}_{n\in\mathbb N}$ a sequence of solutions to 
\begin{equation*}\left\{\begin{aligned}
\partial_{tt}u_n-\Delta_{x,y}u_n+u_n&=0,\qquad (t,x,y) \in \R\times \R^d\times \mathbb T\\
u_n(0,x,y)&=f_n(x,y)\in H^1(\R^d\times \mathbb T)\\
\partial_tu_n(0,x,y)&=g_n(x,y)\in L^2(\R^d\times \mathbb T)
\end{aligned}\right. ,
\end{equation*} 
	 with $\sup_n\|\left(f_n,g_n\right)\|_{\mathcal H}\leq C<\infty.$ Suppose that for any $q\in (2,2^*),$ with $2^*$ defined in \eqref{sob}
\begin{equation*}
 \lim_{n\to\infty}\|u_n\|_{L^\infty L^q}= 0.
 \end{equation*}
		Then \begin{equation*}
		\lim_{n\to\infty}\|u_n\|_{L^{\alpha+1}L^{2(\alpha+1)}}=0.
		\end{equation*}
\end{lemma}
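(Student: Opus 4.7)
The plan is to interpolate the target norm between an $L^\infty L^q$ norm (which vanishes by hypothesis) and an admissible Strichartz norm (which is uniformly bounded via \autoref{thm:strichartz}). I would seek a quadruple $(p,r,q,\theta)$ with $(p,r)$ admissible in the sense of \eqref{pairs}, with $q\in(2,2^*)$, $\theta\in(0,1)$, and satisfying
\[
p=(1-\theta)(\alpha+1),\qquad \frac{1}{2(\alpha+1)}=\frac{\theta}{q}+\frac{1-\theta}{r}.
\]
Once such a choice is available, pointwise-in-time H\"older interpolation in space yields $\|u_n(t)\|_{L^{2(\alpha+1)}}\leq \|u_n(t)\|_{L^q}^{\theta}\,\|u_n(t)\|_{L^r}^{1-\theta}$, and raising to the power $\alpha+1$, integrating in $t$, and pulling out the $L^q$-norm in its sup-in-time form leaves
\[
\|u_n\|_{L^{\alpha+1}L^{2(\alpha+1)}}^{\alpha+1}\leq \|u_n\|_{L^\infty L^q}^{\theta(\alpha+1)}\,\|u_n\|_{L^{p}L^{r}}^{p},
\]
where the identification of the remaining time integral as an $L^{p}_t L^{r}_x$-norm uses precisely the first relation $p=(1-\theta)(\alpha+1)$.

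The Strichartz factor on the right would then be controlled by \autoref{thm:strichartz} applied to the free Klein--Gordon evolution, giving a bound $C\sup_n\|(f_n,g_n)\|_{\mathcal H}^{p}<\infty$, while the first factor tends to $0$ by hypothesis; the claimed convergence follows at once.

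The substantive task is to exhibit the quadruple. I would argue by perturbation from the admissible base pair $(\alpha+1,2(\alpha+1))$, which is exactly the pair used to set up the small data scattering in \autoref{thm:small}. Shrinking $p$ slightly below $\alpha+1$ (so that $\theta=1-p/(\alpha+1)$ is small and positive) while letting $r$ move slightly above $2(\alpha+1)$ within the admissible window of \eqref{pairs} produces an admissible $(p,r)$; the second interpolation relation then defines $q$, and continuity forces $q$ close to $2(\alpha+1)$. Since $2(\alpha+1)>2$, one keeps $q>2$ for any sufficiently small perturbation, so the non-trivial point is the upper bound $q<2^*$: one has to push $r$ far enough above $2(\alpha+1)$, within its admissible range, to drag $q$ below $2^*$. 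This last verification is where I expect the main obstacle to lie; it amounts to a dimension-by-dimension numerical check in $d\in\{1,2,3,4\}$, in which the strict inequalities $4/d<\alpha<4/(d-1)$ (or $\alpha>4$ when $d=1$) provide just enough slack inside the admissible window of \eqref{pairs} for the bookkeeping to close.
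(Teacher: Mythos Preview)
Your approach is essentially the same as the paper's: the paper performs the identical H\"older interpolation between an $L^\infty L^q$ norm and a Strichartz norm (parametrized there by $a+b=2\alpha+2$, $ar=q$, with Strichartz pair $(b/2,bs)$, which matches your $(\theta,q,p,r)$ under $\theta=a/(2\alpha+2)$), and then carries out exactly the dimension-by-dimension verification you defer. One small caveat: your claim that ``continuity forces $q$ close to $2(\alpha+1)$'' is imprecise, since the limit $\theta\to 0$, $r\to 2(\alpha+1)$ is degenerate in $q$ (any value of $q$ can arise depending on the path), but this is harmless because---as you correctly recognize---the actual content is the explicit numerical check, which the paper performs and which does close under the strict inequalities on $\alpha$.
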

\begin{proof}
	We drop the subscript $n$ to lighten the notations. We first make a formal computation (from H\"older inequality) without adjusting the parameters:
	\begin{align}
	\nonumber\|u\|_{L^{\alpha+1}L^{2(\alpha+1)}}& = \left(\int \left(\int |u|^a |u|^b \right)^{1/2}\right)^{1/(\alpha+1)}\\
	\nonumber							& \leq \left(\int \left(\int |u|^{ar}  \right)^{1/(2r)}\; \left(\int |u|^{bs}  \right)^{1/(2s)} \right)^{1/(\alpha+1)}\\
	\nonumber							& \leq \left( \|u\|_{L^{ar}}^{a/2} \; \|u\|_{L^{bs}}^{b/2}\right)^{1/(\alpha+1)}\\
	\label{holder}						& \leq  \|u\|_{L^{\infty}L^{ar}}^{a/(2\alpha+2)}  \|u\|_{L^{b/2}L^{bs}}^{b/(2\alpha+2)}.
	\end{align}
	The claim of \autoref{app:lemma1} is satisfied if the following conditions are fulfilled in \eqref{holder}:
	\begin{align}
	\label{c1}		&a+b = 2\alpha+2, a>0, b>0, \\
	\label{c2}		&r=q/a >1, \\
	\label{c3}		&s= r/(r-1),\\
	\label{c4}		&(b/2, bs)\quad \text{is a Strichartz pair as in}\, \autoref{strichartzRd}.
	\end{align}
	Under these conditions, we may have by hypothesis along with the energy conservation
	$$\|u_n\|_{L^{\alpha+1}L^{2\alpha+2}} \leq \|u_n\|^\gamma_{L^{\infty}L^{q}} E^{1-\gamma}\rightarrow 0, $$
	where $\gamma\in(0,1).$ Note that it is enough to have the convergence to zero in only one $L^\infty L^q$.
	\\
	
	\noindent Let us now check that all conditions are non-empty, by separating the two cases $d=1$ and $2\leq d\leq4.$
	\\
	
\noindent \emph{Case $d=1.$} On one hand, conditions \eqref{c1},\eqref{c2} give
	\begin{equation*}
	2\alpha+2-q<b<2\alpha+2.
	\end{equation*}
To satisfy the Strichartz admissibility \eqref{c4}, we impose to have $b/2>4$ and $bs\geq \frac{2b}{b-8}$ and therefore
	\begin{equation}\label{a:6}
	b>8,\qquad \frac{q}{q-2\a-2+b}\geq\frac{2}{b-8}\iff q\geq\frac{2b-4\a-4}{b-10}.
	\end{equation}
	Thus, how to concern the choice of $b,$ it is enough to have 
	\begin{equation}
	\label{c5}
	\max(8,2\alpha+2-q)<b<2\a+2.
	\end{equation}
	
\noindent On the other hand, the second condition in \eqref{a:6} allows us to strengthen \eqref{c5} to be 
\begin{equation}
	\label{c51}
	\max(10,2\alpha+2-q)<b<2\a+2.
	\end{equation}
Since $\frac{2b-4\a-4}{b-10}<2$ for any $\a>4$ and for any $\max(10,2\alpha+2-q)<b<2\a+2,$  we claim that for any $q>2$ we can find $b$ such that condition \eqref{a:6} is satisfied. \\

\noindent\emph{Case $2\leq d\leq4.$} In this situation we must consider that $\a\in\left(\frac{4}{d},\frac{4}{d-1}\right)$ and then $$2\a+2\in\left(\frac{8+2d}{d},\frac{6+2d}{d-1}\right).$$

\noindent The Strichartz admissibility now reads $b>4$ if $d=2$ or $b\geq4$ if $d=3,4$ that we strengthen as $b>4,$ and  
\begin{equation*}
\frac{2db}{db-8}\leq bs \leq\frac{2b(d+1)}{b(d-1)-4},
\end{equation*} which can be rewritten in term of $q$ as 
\begin{equation}\label{q-range}
b>4,\qquad \frac{2d}{db-8}\leq \frac{q}{q-2\a-2+b} \leq\frac{2(d+1)}{b(d-1)-4}.
\end{equation}

\noindent Let us focus on the l.h.s. inequality  \begin{equation}\label{q-pos}
\frac{2d}{db-8}\leq \frac{q}{q-2\a-2+b}\iff q\geq\frac{2d(2\a+2-b)}{2d+8-bd}.
\end{equation} 
By strengthening the condition on $b$ in the following way
\begin{equation*}
\max\left\{\frac{8+2d}{d},2\a+2-q\right\}<b<2\a+2,
\end{equation*}
we obtain that $\frac{2d(2\a+2-b)}{2d+8-bd}<0$ and since $q$ is positive clearly satisfies  \eqref{q-pos}. We turn the attention to the r.h.s. inequality of \eqref{q-range}. It is equivalent to 
\begin{equation}\label{q-right}
q\geq \frac{2(d+1)(2\a+2-b)}{6+2d-b(d-1)}.
\end{equation}
If we had $\frac{2(d+1)(2\a+2-b)}{6+2d-b(d-1)}<2$ we could conclude. Computations show that 
\begin{equation*}
\frac{2(d+1)(2\a+2-b)}{6+2d-b(d-1)}<2\iff b>\a(d+1)-2.
\end{equation*}
This is straightforward if we had $4>\a(d+1)-2$ which is in turn implied by the condition $\frac{6}{d+1}<\frac{4}{d-1}$ which is equivalent to $d<5.$ 
The proof is concluded. 
\end{proof}

\section{Decay property}\label{app-decay}
We use a decay property from \cite{D'ancona}. The key argument is, again, a scaling argument as in the \autoref{sec:Strichartz} and \cite{HV}. We will briefly sketch the proof given in  \cite[Example 1.2]{D'ancona}.
\\

By means of the basis $\{\Phi_j(y)\}_{j\in\mathbb N}$ given in \eqref{basis} and  \eqref{decompo} we decompose 
\begin{equation*}
e^{it\sqrt{1-\Delta_{x,y}}} f(x,y) = \sum_{j\in \mathbb N} e^{it \sqrt{1+\lambda_j-\Delta_x}}f_j(x)\Phi_j(y).
\end{equation*}
Thus  we get
\begin{equation*}
\left\|e^{it\sqrt{1-\Delta_{x,y}}}f \right\|_{L^\infty(\R^d\times \mathbb{T})}\leq \sum_{j\in \mathbb N} \left\|e^{it \sqrt{1+\lambda_j-\Delta_x}}f_j(\cdot)\right\|_{L^\infty(\R^d)}\|\Phi_j(\cdot)\|_{L^\infty_y}.
\end{equation*}
From \cite{D'ancona}, we have
\begin{equation*}
\left\|e^{it\sqrt{1-\Delta_x}}f\right\|_{L^\infty_x} \leq C |t|^{-d/2} \|f\|_{B^{\frac{d}{2}+1}_{1,1}}.
\end{equation*}
The function $w_m(t,x)=e^{it \sqrt{m-\Delta_x}}f$ satisfies the equation $\partial_{tt}w_m-\Delta_xw_m+mw_m=0$ with $w(0,x)=f(\sqrt m x):=f_m,$ with $w_m:=w(\sqrt mt,\sqrt mx)$ and $w$ satisfying $\partial_{tt}w-\Delta_xw+w=0$ with $w(0,x)=f(x).$ We use a scaling argument to deduce an estimate for $f_m$, noticing that for $m \geq 1$, 
the Besov norm of a rescaled function can be bounded by:
\begin{equation*}
\|f_m\|_{B^{\frac{d}{2}+1}_{1,1}}\leq m^{\frac{d+2}{4}}\|f\|_{B^{\frac{d}{2}+1}_{1,1}},
\end{equation*}
giving the following estimate with $m=1+\lambda_j>1$
\begin{equation*}
\begin{aligned}
\left\|e^{it\sqrt{1-\Delta_{x,y}}} f\right\|_{L^\infty(\R^d\times \mathbb{T})}&\leq C|t|^{-\frac{d}{2}}\sum_{j\in\mathbb N} 
\sqrt{1+\lambda_j}\|f_j\|_{B^{\frac{d}{2}+1}_{1,1}}\|\Phi_j(y)\|_{L^\infty_y}\\
&= C|t|^{-\frac{d}{2}}\sum_{j\in\mathbb N} 
(1+\lambda_j)^{d+1}(1+\lambda_j)^{-d-1/2}\|f_j\|_{B^{\frac{d}{2}+1}_{1,1}}\|\Phi_j(y)\|_{L^\infty_y}\\
&\lesssim |t|^{-\frac{d}{2}}\sum_{j\in\mathbb N} 
(1+\lambda_j)^{d+1}\|f_j\|_{B^{\frac{d}{2}+1}_{1,1}}\|\Phi_j(y)\|_{L^\infty_y}.
\end{aligned}
\end{equation*}
Noticing that the right handside can be expressed a term involving derivatives in $(x,y)$, one can find $N\in\mathbb N$ large enough to have
\begin{equation}
\label{disp:m3}
\left\|e^{it\sqrt{1-\Delta_{x,y}}}f \right\|_{L^\infty(\R^d\times \mathbb{T})}\leq C|t|^{-\frac{d}{2}} \|f\|_{W^{N,1}(\R^d\times \mathbb{T})}.
\end{equation}




\begin{bibdiv}
\begin{biblist}

\bib{BV}{article}{
   author={Banica, Valeria},
   author={Visciglia, Nicola},
   title={Scattering for NLS with a delta potential},
   journal={J. Differential Equations},
   volume={260},
   date={2016},
   number={5},
   pages={4410--4439},
   issn={0022-0396},
}

\bib{Bou}{article}{
   author={Bourgain, J.},
   title={Fourier transform restriction phenomena for certain lattice
   subsets and applications to nonlinear evolution equations. I. Schr\"odinger
   equations},
   journal={Geom. Funct. Anal.},
   volume={3},
   date={1993},
   number={2},
   pages={107--156},
   issn={1016-443X},
}
\bib{Brenner84}{article}{
   author={Brenner, Philip},
   title={On space-time means and everywhere defined scattering operators
   for nonlinear Klein-Gordon equations},
   journal={Math. Z.},
   volume={186},
   date={1984},
   number={3},
   pages={383--391},
   issn={0025-5874},
}

\bib{Brenner85}{article}{
   author={Brenner, Philip},
   title={On scattering and everywhere defined scattering operators for
   nonlinear Klein-Gordon equations},
   journal={J. Differential Equations},
   volume={56},
   date={1985},
   number={3},
   pages={310--344},
   issn={0022-0396},
}

\bib{BL}{article}{
   author={Br\'ezis, Ha\"\i m},
   author={Lieb, Elliott},
   author={},
   title={A relation between pointwise convergence of functions and
   convergence of functionals},
   journal={Proc. Amer. Math. Soc.},
   volume={88},
   date={1983},
   number={3},
   pages={486--490},
   issn={0002-9939},
}
\bib{Bul}{article}{
   author={Bulut, Aynur},
   title={The defocusing energy-supercritical cubic nonlinear wave equation
   in dimension five},
   journal={Trans. Amer. Math. Soc.},
   volume={367},
   date={2015},
   number={9},
   pages={6017--6061},
   issn={0002-9947},
}
\bib{BGT}{article}{
   author={Burq, N.},
   author={G\'erard, P.},
   author={Tzvetkov, N.},
   title={Strichartz inequalities and the nonlinear Schr\"odinger equation on
   compact manifolds},
   journal={Amer. J. Math.},
   volume={126},
   date={2004},
   number={3},
   pages={569--605},
   issn={0002-9327},
}
\bib{CGYZ}{unpublished}{
	author={Cheng, X.},
	author={Guo, Z.},
	author={Yang, K.},
	author={Zhao, L.},
	title={On scattering for the cubic defocusing nonlinear {S}chrödinger equation on waveguide $\R^2\times \T$},
	note={Preprint 2017, available at https://arxiv.org/abs/1705.00954},
}

\bib{D'ancona}{article}{
   author={D'Ancona, Piero},
   title={Smoothing and dispersive properties of evolution equations with
   potential perturbations},
   journal={Hokkaido Math. J.},
   volume={37},
   date={2008},
   number={4},
   pages={715--734},
   issn={0385-4035},
}

\bib{GV_NLKG_I}{article}{
   author={Ginibre, J.},
   author={Velo, G.},
   title={The global Cauchy problem for the nonlinear Klein-Gordon equation},
   journal={Math. Z.},
   volume={189},
   date={1985},
   number={4},
   pages={487--505},
   issn={0025-5874},
}			
\bib{GV_decay}{article}{
   author={Ginibre, J.},
   author={Velo, G.},
   title={Time decay of finite energy solutions of the nonlinear
  {K}lein-{G}ordon and {S}chr\"odinger equations},
   journal={Ann. Inst. H. Poincar{\'e}
  Phys. Th{\'e}or.},
   volume={43},
   date={1985},
   number={4},
   pages={399--422},
}					

\bib{GV_NLKG_II}{article}{
   author={Ginibre, J.},
   author={Velo, G.},
   title={The global Cauchy problem for the nonlinear Klein-Gordon equation.
   II},
   language={English, with French summary},
   journal={Ann. Inst. H. Poincar\'e Anal. Non Lin\'eaire},
   volume={6},
   date={1989},
   number={1},
   pages={15--35},
   issn={0294-1449},
}

\bib{GPT}{article}{
   author={Gr\'ebert, Beno\^\i t},
   author={Paturel, \'Eric},
   author={Thomann, Laurent},
   title={Modified scattering for the cubic Schr\"odinger equation on product
   spaces: the nonresonant case},
   journal={Math. Res. Lett.},
   volume={23},
   date={2016},
   number={3},
   pages={841--861},
   issn={1073-2780},
}
\bib{HP}{article}{
   author={Hani, Zaher},
   author={Pausader, Benoit},
   title={On scattering for the quintic defocusing nonlinear Schr\"odinger
   equation on $\mathbb R\times\mathbb T^2$},
   journal={Comm. Pure Appl. Math.},
   volume={67},
   date={2014},
   number={9},
   pages={1466--1542},
   issn={0010-3640},
}
\bib{HPTV}{article}{
   author={Hani, Zaher},
   author={Pausader, Benoit},
   author={Tzvetkov, Nikolay},
   author={Visciglia, Nicola},
   title={Modified scattering for the cubic Schr\"odinger equation on product
   spaces and applications},
   journal={Forum Math. Pi},
   volume={3},
   date={2015},
   pages={e4, 63},
   issn={2050-5086},
}

\bib{HV}{article}{
   author={Hari, L.},
   author={Visciglia, N.},
   title={Small data scattering for energy critical NLKG on
 product spaces $\mathbb{R}^d\times\mathcal M^2$},
   journal={Communications in Contemporary Mathematics, in print} 
}
\bib{Heb1}{book}{
   author={Hebey, Emmanuel},
   title={Sobolev spaces on Riemannian manifolds},
   series={Lecture Notes in Mathematics},
   volume={1635},
   publisher={Springer-Verlag, Berlin},
   date={1996},
   pages={x+116},
   isbn={3-540-61722-1},
}
	
\bib{IMN11}{article}{
   author={Ibrahim, Slim},
   author={Masmoudi, Nader},
   author={Nakanishi, Kenji},
   title={Scattering threshold for the focusing nonlinear Klein-Gordon
   equation},
   journal={Anal. PDE},
   volume={4},
   date={2011},
   number={3},
   pages={405--460},
   issn={2157-5045},
}
\bib{IMN11bis}{article}{
   author={Ibrahim, Slim},
   author={Masmoudi, Nader},
   author={Nakanishi, Kenji},
   title={Correction to the article Scattering threshold for the focusing
   nonlinear Klein-Gordon equation},
   journal={Anal. PDE},
   volume={9},
   date={2016},
   number={2},
   pages={503--514},
   issn={2157-5045},
}

\bib{KapIII}{article}{
	AUTHOR = {Kapitanski\u{i} , L. V.},
	TITLE = {The {C}auchy problem for the semilinear wave equation. {III}},
	JOURNAL = {\textit{translated in }J. Soviet. Math},
	VOLUME = {62},
	YEAR = {1990},
	NUMBER = {2},
	PAGES = {2619--2645},
}


\bib{KT}{article}{
   author={Keel, Markus},
   author={Tao, Terence},
   title={Endpoint Strichartz estimates},
   journal={Amer. J. Math.},
   volume={120},
   date={1998},
   number={5},
   pages={955--980},
   issn={0002-9327},
}	
\bib{KM1}{article}{
   author={Kenig, Carlos E.},
   author={Merle, Frank},
   title={Global well-posedness, scattering and blow-up for the
   energy-critical, focusing, non-linear Schr\"odinger equation in the radial
   case},
   journal={Invent. Math.},
   volume={166},
   date={2006},
   number={3},
   pages={645--675},
   issn={0020-9910},
}
\bib{KM2}{article}{
   author={Kenig, Carlos E.},
   author={Merle, Frank},
   title={Global well-posedness, scattering and blow-up for the
   energy-critical focusing non-linear wave equation},
   journal={Acta Math.},
   volume={201},
   date={2008},
   number={2},
   pages={147--212},
   issn={0001-5962},
}
\bib{KSV}{article}{
   author={Killip, Rowan},
   author={Stovall, Betsy},
   author={Visan, Monica},
   title={Scattering for the cubic Klein-Gordon equation in two space
   dimensions},
   journal={Trans. Amer. Math. Soc.},
   volume={364},
   date={2012},
   number={3},
   pages={1571--1631},
   issn={0002-9947},
}

\bib{KV1}{article}{
   author={Killip, Rowan},
   author={Visan, Monica},
   title={The defocusing energy-supercritical nonlinear wave equation in
   three space dimensions},
   journal={Trans. Amer. Math. Soc.},
   volume={363},
   date={2011},
   number={7},
   pages={3893--3934},
   issn={0002-9947},
}
\bib{KV2}{article}{
   author={Killip, Rowan},
   author={Visan, Monica},
   title={The radial defocusing energy-supercritical nonlinear wave equation
   in all space dimensions},
   journal={Proc. Amer. Math. Soc.},
   volume={139},
   date={2011},
   number={5},
   pages={1805--1817},
   issn={0002-9939},
}
\bib{KV3}{article}{
   author={Killip, Rowan},
   author={Visan, Monica},
   title={Energy-supercritical NLS: critical $\dot H^s$-bounds imply
   scattering},
   journal={Comm. Partial Differential Equations},
   volume={35},
   date={2010},
   number={6},
   pages={945--987},
   issn={0360-5302},
}
\bib{MNO02}{article}{
   author={Machihara, Shuji},
   author={Nakanishi, Kenji},
   author={Ozawa, Tohru},
   title={Nonrelativistic limit in the energy space for nonlinear
   Klein-Gordon equations},
   journal={Math. Ann.},
   volume={322},
   date={2002},
   number={3},
   pages={603--621},
   issn={0025-5831},
}

\bib{MNO03}{article}{
   author={Machihara, Shuji},
   author={Nakanishi, Kenji},
   author={Ozawa, Tohru},
   title={Small global solutions and the nonrelativistic limit for the
   nonlinear Dirac equation},
   journal={Rev. Mat. Iberoamericana},
   volume={19},
   date={2003},
   number={1},
   pages={179--194},
   issn={0213-2230},
}	
\bib{Mor}{article}{
   author={Morawetz, Cathleen S.},
   title={Time decay for the nonlinear Klein-Gordon equations},
   journal={Proc. Roy. Soc. Ser. A},
   volume={306},
   date={1968},
   pages={291--296},
}
\bib{MS}{article}{
   author={Morawetz, Cathleen S.},
   author={Strauss, Walter A.},
   title={Decay and scattering of solutions of a nonlinear relativistic wave
   equation},
   journal={Comm. Pure Appl. Math.},
   volume={25},
   date={1972},
   pages={1--31},
   issn={0010-3640},
}
\bib{NOpisa}{article}{
   author={Nakamura, Makoto},
   author={Ozawa, Tohru},
   title={Small data scattering for nonlinear Schr\"odinger wave and
   Klein-Gordon equations},
   journal={Ann. Sc. Norm. Super. Pisa Cl. Sci. (5)},
   volume={1},
   date={2002},
   number={2},
   pages={435--460},
   issn={0391-173X},
}	
\bib{NakCPDE}{article}{
   author={Nakanishi, Kenji},
   title={Unique global existence and asymptotic behavior of solutions for
   wave equations with non-coercive critical nonlinearity},
   journal={Comm. Partial Differential Equations},
   volume={24},
   date={1999},
   number={1-2},
   pages={185--221},
   issn={0360-5302},
}
\bib{Nak}{article}{
   author={Nakanishi, Kenji},
   title={Energy scattering for nonlinear Klein-Gordon and Schr\"odinger
   equations in spatial dimensions $1$ and $2$},
   journal={J. Funct. Anal.},
   volume={169},
   date={1999},
   number={1},
   pages={201--225},
   issn={0022-1236},
}
\bib{Nak2}{article}{
   author={Nakanishi, Kenji},
   title={Scattering theory for the nonlinear Klein-Gordon equation with
   Sobolev critical power},
   journal={Internat. Math. Res. Notices},
   date={1999},
   number={1},
   pages={31--60},
   issn={1073-7928},
}
\bib{NS11}{book}{
   author={Nakanishi, Kenji},
   author={Schlag, Wilhelm},
   title={Invariant manifolds and dispersive Hamiltonian evolution
   equations},
   series={Zurich Lectures in Advanced Mathematics},
   publisher={European Mathematical Society (EMS), Z\"urich},
   date={2011},
   pages={vi+253},
   isbn={978-3-03719-095-1},
}
\bib{Pecher85}{article}{
   author={Pecher, Hartmut},
   title={Low energy scattering for nonlinear Klein-Gordon equations},
   journal={J. Funct. Anal.},
   volume={63},
   date={1985},
   number={1},
   pages={101--122},
   issn={0022-1236},
}
\bib{Rocha}{article}{
   author={Vila\c{c}a da Rocha, Victor},
   title={Modified scattering and beating effect for coupled Schr\"odinger systems on product spaces with small initial data},
   journal={preprint available at https://arxiv.org/abs/1609.03848}
}
\bib{Tar}{article}{
   author={Tarulli, Mirko},
   title={Well-posedness and scattering for the mass-energy NLS on $\R^n\times\mathcal M^k$},
   journal={preprint avalaible at https://arxiv.org/abs/1510.01710}
}
\bib{Triebel}{article}{
   author={Triebel, Hans},
   title={Spaces of distributions with weights. Multipliers in
   $L_{p}$-spaces with weights},
   journal={Math. Nachr.},
   volume={78},
   date={1977},
   pages={339--355},
   issn={0025-584X},
}
\bib{Triebel_II}{book}{
   author={Triebel, Hans},
   title={Theory of function spaces. II},
   series={Monographs in Mathematics},
   volume={84},
   publisher={Birkh\"auser Verlag, Basel},
   date={1992},
   pages={viii+370},
   isbn={3-7643-2639-5},
}	
\bib{Triebel_III}{book}{
   author={Triebel, Hans},
   title={Theory of function spaces. III},
   series={Monographs in Mathematics},
   volume={100},
   publisher={Birkh\"auser Verlag, Basel},
   date={2006},
   pages={xii+426},
   isbn={978-3-7643-7581-2},
   isbn={3-7643-7581-7},
}
\bib{TVI}{article}{
   author={Tzvetkov, Nikolay},
   author={Visciglia, Nicola},
   title={Small data scattering for the nonlinear Schr\"odinger equation on
   product spaces},
   journal={Comm. Partial Differential Equations},
   volume={37},
   date={2012},
   number={1},
   pages={125--135},
   issn={0360-5302},
}
\bib{TV2}{article}{
   author={Tzvetkov, Nikolay},
   author={Visciglia, Nicola},
   title={Well-posedness and scattering for nonlinear Schr\"odinger equations
   on $\mathbb{R}^d\times\mathbb{T}$ in the energy space},
   journal={Rev. Mat. Iberoam.},
   volume={32},
   date={2016},
   number={4},
   pages={1163--1188},
   issn={0213-2230},
}


\end{biblist}
\end{bibdiv}

\end{document}